\documentclass[10pt]{amsart}
\usepackage{hyperref}
\usepackage{esint}
\usepackage[margin=1in]{geometry}
\usepackage[latin1]{inputenc}
\usepackage{amsfonts}
\usepackage{enumitem}
\usepackage{color}
\usepackage{bm}
\usepackage{textcomp}
\usepackage [english]{babel}
\usepackage{graphicx,dsfont}
\usepackage{amssymb,amsmath,amsthm}
\usepackage{changepage}

\newtheorem{theorem}{Theorem}[section]
\newtheorem{definition}{Definition}[section]
\newtheorem{proposition}{Proposition}[section]
\newtheorem{lemma}{Lemma}[section]
\newtheorem{corollary}{Corollary}[section]

\newtheorem{remark}{Remark}[section]
\theoremstyle{plain} 
   % Composed symbols

%\def\r{\color{red}}
\let\div\undefined
\DeclareMathOperator{\div}{div}

\catcode`\@=11
\makeatletter

\@addtoreset{equation}{section}

\begin{document}

\title[Global gradient estimates and second-order regularity]{
Global gradient estimates for solutions of parabolic equations with nonstandard growth}
%}

\author{Rakesh Arora}
\address{Department of Mathematical Sciences, Indian Institute of Technology (IIT-BHU), Varanasi, 221005, Uttar Pradesh, India}
\email{rakesh.mat@itbhu.ac.in} %, arora.npde@gmail.com}
\thanks{The first author acknowledges the support of the SERB Research Grant SRG/2023/000308, India, and Seed grant IIT(BHU)/DMS/2023-24/493.}

%    author two information
\author{Sergey Shmarev}
\address{Department of Mathematics, University of Oviedo, c/Federico Garc\'{i}a Lorca, Oviedo, 33007, Asturias, Spain}
\email[Corresponding author]{shmarev@uniovi.es}
\thanks{The second author acknowledges the support of the Research Grant MCI-21-PID2020-116287GB-I00, Spain}

%\today

\keywords{Nonlinear parabolic equation, Nonstandard growth, Global Gradient estimates, Second-order regularity}

\subjclass[2020]{35K65, 35K67, 35B65, 35K55, 35K99}

\begin{abstract}
We study how the smoothness of the initial datum and the free term affect the global regularity properties of solutions to the Dirichlet problem for the class of parabolic equations of $p(x,t)$-Laplace type %with nonlinear sources depending on the solution and its gradient:

\[
u_t-\Delta_{p(\cdot)}u=f(z)+F(z,u,\nabla u),\quad z=(x,t)\in Q_T=\Omega\times (0,T),
\]
with the nonlinear source $F(z,u,\nabla u)=a(z)|u|^{q(z)-2}u+|\nabla u|^{s(z)-2}(\vec c,\nabla u)$.
It is proven the existence of a solution such that if $|\nabla u(x,0)|\in L^r(\Omega)$ for some $r\geq \max\{2,\max p(z)\}$, then the gradient preserves the initial order of integrability in time, gains global higher integrability, and the solution acquires the second-order regularity in the following sense:
\[
\text{$|\nabla u(x,t)|\in L^r(\Omega)$ for a.e. $t \in (0,T)$}, \qquad \text{$|\nabla u|^{p(z)+\rho+r-2} \in L^1(Q_T)$ for any $\rho \in \left(0, \frac{4}{N+2}\right)$},
\]
and
\[
|\nabla u|^{\frac{p(z)+r}{2}-2}\nabla u\in L^2(0,T;W^{1,2}(\Omega))^N.
\]
The exponent $r$ is arbitrary and independent of $p(z)$ if $f\in L^{N+2}(Q_T)$, while for $f\in L^\sigma(Q_T)$ with $\sigma \in (2,N+2)$ the exponent $r$ belongs to a bounded interval whose endpoints are defined by $\max p(z)$, $\min p(z)$, $N$, and $\sigma$. An integration by parts formula is also proven, which is of independent interest.
\end{abstract}

\maketitle

% \linenumbers

\tableofcontents

\section{Introduction}
We study the global regularity properties of solutions to the problem

\begin{equation}
\label{eq:main}
\begin{split}
& u_t-\operatorname{div}\left(|\nabla u|^{p(z)-2}\nabla u\right)=F(z,u,\nabla u)+f(z)\quad \text{in $Q_T=\Omega\times (0,T)$},
\\
& \text{$u=0$ on $\partial\Omega\times (0,T)$},
\\
& \text{$u(x,0)=u_0(x)$ in $\Omega$}
\end{split}
\end{equation}
with the nonlinear source of the form

\[
F(z,u,\nabla u)=a(z)|u|^{q(z)-2}u+ |\nabla u|^{s(z)-2}(\vec c(z), \nabla u).
\]
The coefficients $a$, $\vec{c}$, the exponents $p$, $q$, $s$ and the free term $f$ are given functions of the independent variables $z=(x,t)\in Q_T$. It is assumed that $\Omega\subset \mathbb{R}^N$, $N\geq 2$, is a bounded domain with the boundary  $\partial\Omega\in C^{2+\alpha}$, $\alpha\in (0,1)$.

Equation \eqref{eq:main} belongs to the class of equations with nonstandard growth because the exponents of nonlinearity $p$, $q$, and $s$ are not constants but functions of the independent variables, which causes a gap between the coercivity and growth conditions in the principal part of the equation. Nonlinearities of this kind are found in various real-world applications, such as mathematical models of electro-rheological or thermo-rheological fluids, as well as in image processing.

When $|\nabla u|=0$, equation \eqref{eq:main} degenerates if $p(z)>2$, or becomes singular if $p(z)<2$. Therefore, the solution needs to be understood in a weak sense. The study of the local regularity properties of a weak solution to a nonlinear parabolic equation is often practically independent of the method used to prove its existence. These results apply to every solution from a specific class and depend only on the nonlinear structure of the equation. In contrast, global estimates or estimates that continue to hold up to the boundary of the problem domain require information about the regularity of the domain and the data. The questions of regularity of weak solutions to nonlinear evolution equations involving the $p(z)$-Laplace, or the regularized nondegenerate $p(z)$-Laplace operators,

\[
\Delta_{p(\cdot)}u=\operatorname{div}\left(|\nabla u|^{p(z)-2}\nabla u\right),\qquad \Delta^{(\mu)}_{p(\cdot)}u=\operatorname{div}\left(\left((\mu+|\nabla u|^2\right)^{\frac{p(z)-2}{2}}\nabla u\right),\quad \mu\not=0,
\]
have been studied by many researchers.  The weak solutions of the equation

\begin{equation}
\label{eq:p-Laplace}
u_t=\Delta_{p(\cdot )} u+f,\qquad \min p(z)>1,
\end{equation}
or systems of equations of a similar structure possess the property of improved integrability of the gradient: instead of the inclusion $|\nabla u|^{p(z)}\in L^{1}(Q_T)$ prompted by the equation, for the solutions of equation \eqref{eq:p-Laplace}

\[
\int_{S}|\nabla u|^{p(z)+\delta}\,dz\leq C
\]
in the arbitrary strictly interior cylinder $S\Subset Q_T$, with a constant $\delta>0$, which depends on the distance from $S$ to the parabolic boundary of $Q_T$. For variable exponents $p(z)$, this property was found in \cite{Ant-Zhikov-2005} for $p(z)\geq 2$ and in \cite{Zhikov-Past-2010} in the range $\frac{2N}{N+2}<p(z)\leq p^+<\infty$ for the exponents $p(z)$ with the logarithmic modulus of continuity. The global version of this inequality was proven \cite{Ar-Sh-2021,Ar-Sh-RACSAM-2023} for Lipschitz-continuous exponents $p(\cdot)$ and $\partial\Omega\in C^2$:

\begin{equation}
\label{eq:high-int-intr}
\int_{Q_T}|\nabla u|^{p(z)+\delta}\,dz\leq C\quad \text{with any $\delta \in \left(0,\frac{4}{N+2}\right)$}
\end{equation}
and a constant $C$ depending on $N$, $\max p$, $\min p$, $\delta$ and $\|u\|_{L^{\infty}(0,T;L^2(\Omega))}$. We also refer to the recent work  \cite{Hasto-OK-2021} for the proof of local higher integrability of the gradients in solutions of systems of parabolic equations with Orlicz growth, and to \cite{KKM-2023} for the systems of double-phase parabolic equations.

Systems of parabolic equations

\begin{equation}
\label{eq:intr-system}
u^{i}_t-\sum_{\alpha=1}^n\left[\mathbf{a}(|\nabla u|)u_{x_\alpha}^{i}\right]_{x_{\alpha}}=b^{i},\quad i=\overline{1,N},\;\;n\geq 2,
\end{equation}
in a cylinder $\Omega\times (0,T)$ with bounded and convex $\Omega$ are studied in \cite{Bogelein-2021}. It is assumed that the function $\mathbf{a}(\cdot)$ satisfies $\mathbf{a}(s)+s\mathbf{a}'(s)\approx s^{p-2}$ with constant $p>\frac{2N}{N+2}$, and that $b^{i}\in L^\sigma(Q_T)$ with $\sigma>N+2$. It is shown that on every time interval $(\epsilon, T)$, $\epsilon>0$, the gradient of the solution is bounded up to the lateral boundary where the homogeneous Dirichlet condition is posed. In the paper \cite{DeFilippis-2020}, local $L^\infty$ gradient bounds are derived for a class of parabolic equations with $(p,q)$-growth. The results of \cite{DeFilippis-2020} apply to \eqref{eq:p-Laplace} with $f=0$, as well as certain double-phase parabolic equations. The optimal conditions for local boundedness of the gradient of solutions to systems of parabolic equations of the structure \eqref{eq:p-Laplace} are found in \cite{Kuusi-Mignione-2012} in the framework of borderline rearrangement invariant function spaces
of Lorentz type under the assumption $f\in L(N + 2,1)$, see also \cite{Kuusi-Mingione-2012-NA, Kuusi-Mingione-2013-1}. In the case $f \in L^{\sigma}(Q_T)$ with $\sigma \leq N+2$, the question of global (and even local) gradient regularity remains open.

Further results concern local H\"older continuity of the gradient. We refer to \cite{Bogelein-2022} for systems \eqref{eq:intr-system} with $b=0$, and \cite{OK-Scilla-Stroffolini-2024} for systems of parabolic equations with the Uhlenbeck structure, see also further references therein. For the solutions of problem  \eqref{eq:main} with nonlinear sources subject to appropriate growth conditions and $f\in L^{\infty}(Q_T)$, the H\"older continuity of $\nabla u$ in every cylinder $\Omega\times (\epsilon,T)$, $\epsilon>0$, with $\partial\Omega\in C^{1+\alpha}$ is proven in \cite{Ding-Zhang-Zhou-2020}. For the viscosity solutions of the normalized $p$-Laplace equation, local H\"older continuity of the spatial gradient is proven in \cite{Andrade-Santos-2022-viscosity-sol} for $p=const$, and in \cite{Imb-Jin-Silv-2019-viscosity-sol} for the differentiable variable exponent $p(x,t)$.

The results on the second-order regularity of solutions to equations and systems that involve the $p$-Laplace operator are usually formulated in terms of the inclusions

\[
|\nabla u|^{\lambda}\nabla u\in L^2(0,T;W^{1,2}(\Omega))^N\qquad \text{or} \quad L^2_{loc}(0,T;W^{1,2}_{loc}(\Omega))^N
\]
with an exponent $\lambda$ depending on $p$ and $N$. We refer here to \cite{Seregin-Acerbi-Mignione-2004,Duzaar-Mignione-Steffen-2011,Feng-Parviainen-Sarsa-2023,Feng-Parviainen-Sarsa-2022,DeFilippis-2020} for results on the local second-order regularity of weak and viscous solutions to equations of the type \eqref{eq:p-Laplace} with constant or variable $p$, and to \cite{Berselli-Ruzicka-2022} for global regularity results, see also references therein.

The global second-order regularity of weak solutions to the Dirichlet problem for equation \eqref{eq:p-Laplace} or \eqref{eq:main} with different choices of the source terms and the exponents of nonlinearity was studied in \cite{Cianchi-Maz'ya-2019-1,Ar-Sh-2021,Ar-Sh-2024}. Paper \cite{Cianchi-Maz'ya-2019-1} deals with the \textit{approximable solutions} of the homogeneous Dirichlet problem for equation \eqref{eq:p-Laplace} with $p=\text{const.}>1$. It is proven that for such solutions, which are the limits of solutions to the problem with
smooth data,

\[
\begin{split}
& u_t\in L^2(Q_T), \qquad |\nabla u|^{p-2}\nabla u\in L^2(0,T;W^{1,2}(\Omega))^N,
\\
& \|u_t\|_{L^2(Q_T)}+\||\nabla u|^{p-2}\nabla u\|_{L^2(0,T;W^{1,2}(\Omega))}\leq C
\end{split}
\]
with a constant $C$ depending only on $\|u_0\|_{W^{1,p}_0(\Omega)}$ and $\|f\|_{L^2(Q_T)}$. This global result is optimal in the sense that in the special case $u_0=0$ the $L^2(0,T;W^{1,2}(\Omega))$-norm of the flux function $|\nabla u|^{p-2}\nabla u$ is bounded on both sides by $C_i\|f\|_{L^2(Q_T)}$ with constants $C_i$ independent of the solution $u$. The regularity results of \cite{Cianchi-Maz'ya-2019-1} are obtained under very weak assumptions on the smoothness of $\partial\Omega$. The pointwise convergence of the gradients of solutions to the approximate problems to the gradient of the approximable solution plays a crucial role in the proof.
In \cite{Ar-Sh-2021}, the solution of the homogeneous Dirichlet problem for the evolution $p(z)$-Laplace equation is obtained as the limit of the sequence of Galerkin's approximations. As distinguished from \cite{Ar-Sh-2021}, the scheme of construction of a solution proposed in \cite{Ar-Sh-2024} consists of approximating both the data and the nonlinearities in problem \eqref{eq:main} by smooth functions, obtaining a family of classical solutions to the approximate problems, and proving that this family contains a sequence that converges to a solution of the original problem. For the solutions of problem \eqref{eq:main} with $F\equiv 0$, $f\in L^2(0,T;W^{1,2}_0(\Omega))$, and $u_0\in W^{1,q(\cdot)}_0(\Omega)$ with $q(x)=\max\{2,p(x,0)\}$,  the inclusions

\[
u_t\in L^2(Q_T),\quad D_{x_i}\left(|\nabla u|^{\frac{p(z)-2}{2}}D_{x_j} u\right)\in L^2(Q_T),\;\;i,j=\overline{1,N},
\]
and the estimates on the corresponding norms are obtained in \cite{Ar-Sh-2021} for the Lipschitz-continuous exponent $p(z)$ in the range $p(z)>\max\left\{\frac{2N}{N+2},\frac{6}{5}\right\}$. The restiction $p(z)>\frac{6}{5}$ if $N=2$ was removed in \cite{Ar-Sh-RACSAM-2023}. It is proved in \cite{Ar-Sh-2024} that problem \eqref{eq:main} with $p(z)>\frac{2(N+1)}{N+2}$, $u_0\in W^{1,p(\cdot,0)}_0(\Omega)$, and $f\in L^2(Q_T)$ admits a solution $u$ with the improved properties

\begin{equation}
\label{eq:approx-L-2}
\begin{split}
& |\nabla u|^{2(p(z)-1)+\delta}\in L^1(Q_T) \quad \text{with any $\delta \in \left(0,\frac{4}{N+2}\right)$},
\\
& |\nabla u|^{p(z)-2}\nabla u\in L^2(0,T;W^{1,2}(\Omega)).
\end{split}
\end{equation}
Properties \eqref{eq:approx-L-2} follow from the uniform estimates on the classical solutions of the regularized problems and the fact that the gradients of solutions to the regularized problems converge pointwise to the gradient of the limit problem. The proof of the a.e. convergence of the gradients relies on the property  \eqref{eq:high-int-intr} of improved integrability of the gradient. These second-order regularity results offer a natural continuation of the results of \cite{Cianchi-Maz'ya-2019-1} to the case of variable exponents $p(z)$. Still, the approaches of \cite{Cianchi-Maz'ya-2019-1} and \cite{Ar-Sh-2024} are different.

To the best of our knowledge, even in the case of constant $p>1$, no results are available concerning the effect of the regularity of the initial data and nonlinear source term on the global integrability of the gradient and the second-order regularity of solutions to the Dirichlet problem for equation \eqref{eq:p-Laplace}.

In the present work, we address the global regularity properties of the gradients in solutions to problem \eqref{eq:main}. We are interested in the following question:

\begin{quote}
\centering
{\it how does the regularity of the initial function and the free term affect the global properties of the gradient of a solution to problem \eqref{eq:main}}?
\end{quote}

We show that under quite general conditions on the exponents of nonlinearity and the assumption that $f$ and $u_0$ satisfy one of the conditions

\begin{equation}
\label{eq:intr-0}
 \begin{split}
 {\rm (a)}& \qquad \text{$f\in L^{N+2}(Q_T)$},\qquad \text{$|\nabla u_0|\in L^r(\Omega) \quad \text{with} \quad r\geq \max\left\{2,\max_{\overline{Q}_T}p(z)\right\}:= R_p$},
 \\
  {\rm (b)}& \qquad \text{$f\in L^\sigma(Q_T)$ with $\sigma \in (2,N+2)$}, \qquad \text{$|\nabla u_0|\in L^r(\Omega)$ with $R_p \leq r  < R_\sigma$},
 \end{split}
\end{equation}
where $R_\sigma$ is a finite number defined by $\max_{\overline{Q}_T}p(z)$, $\min_{\overline{Q}_T}p(z)$, $N$, and $\sigma$,
problem \eqref{eq:main} admits a solution whose gradient preserves the initial order of integrability in time, gains global higher integrability, and acquires the second-order regularity in the following sense:

\begin{equation}
\label{eq:intr-1}
\begin{split}
\text{(a)} & \quad |\nabla u(x,t)| \in L^\infty(0,T;L^r(\Omega)),
\\
\text{(b)} & \quad |\nabla u|^{p(z)+\rho+r-2} \in {L^{1}(Q_T)} \quad \text{for any} \ \rho \in \left(0, \frac{4}{N+2}\right),
\\
\text{(c)} & \quad  |\nabla u|^{\frac{p(z)+r-2}{2}}\in L^2(0,T;W^{1,2}(\Omega)), \quad \text{and} \quad  D_{x_i}\left(|\nabla u|^{\frac{p(z)+r}{2}-2}D_{x_j}u\right)\in L^2(Q_T),\quad i,j=\overline{1,N}.
\end{split}
\end{equation}
The conditions on the exponents $p$, $q$, $s$ and the coefficients $a$ and $\vec c$ will be specified later (see \eqref{eq:data}).

Two features are worth noting. Firstly, the provable order of integrability of $|\nabla u|$ is different in the cases \eqref{eq:intr-0} (a) and (b). In case \eqref{eq:intr-0} (a) $|\nabla u|$ maintains its initial order of integrability $r$ in time and gains global higher integrability independently of $p$, while in case \eqref{eq:intr-0} (b), where the order of integrability of $f$ is below $N+2$, the value of $r$ falls within an interval $[R_p, R_\sigma)$ whose endpoints depend on the properties of $p$, $N$ and the exponent $\sigma$. Furthermore, as $\sigma$ approaches $(N+2)^-$, the right endpoint $R_\sigma$ tends to $+\infty$ and as $\sigma$ approaches $2^+$, the properties \eqref{eq:intr-1} (a), (b) and (c) coincide with the results obtained in \cite{Ar-Sh-2024} for the case $\sigma=2$ and $r=p(\cdot)$. Secondly, when proving properties \eqref{eq:intr-1}, we do not distinguish between the degenerate or singular cases, {\it i.e.}, it is possible that $p(z) \geq 2$ on the part of the cylinder $Q_T$ and $p(z) < 2$ on the rest. The results are new even for the model equation \eqref{eq:p-Laplace} with constant $p$. The present work can also be seen as a continuation of \cite{Ar-Sh-2024}, as the method used to construct a solution to \eqref{eq:main} in that work furnishes the framework for proving further regularity properties \eqref{eq:intr-1}.

Organization of the paper. Section \ref{sec:prelim} consists of an outline of the associated function spaces, formulation of the regularized problem, and statements of the main results of this work. In Section \ref{sec:Hessian}, we prove new differential and integral inequalities that result from the multiplication of the regularized equation \eqref{eq:main} by the regularized $r$-Laplacian and integration by parts in $\Omega$. In the case of variable $p(z)$, these formulas include residual terms, which do not belong to the natural energy space prompted by the equation. The residual terms are estimated with the help of the property of higher integrability of the gradient that follows from special interpolation inequalities.
In Section \ref{sec:est-source}, we derive integral estimates that involve unbounded free source term $f$ and the nonlinear sources dependent on $u$ and $\nabla u$. The estimates are obtained for $f\in L^{\sigma}(Q_T)$ with $\sigma>2$ and are different in the cases $\sigma = N+2$ and $\sigma \in (2,N+2)$. The proofs of the main results are given in Section \ref{sec:proof-results}. Finally, in Appendix \ref{sec:intbyparts}, we present the proof of the integration by parts formula, which is of independent interest.

\section{Preliminaries, assumptions and results}
\label{sec:prelim}
In this section, we collect known facts from the theory of variable Lebesgue and Sobolev spaces where the solutions to problem \eqref{eq:main} belong to, pose the approximate problems and recall the existence and regularity results obtained in \cite{Ar-Sh-2024}. The main results of the current work are presented in Section \ref{sec:results}.

\subsection{The function spaces}
\label{subsec:spaces}

Let $\Omega$ be a bounded domain with the Lipschitz continuous boundary
$\partial \Omega$ and $p: \Omega \to [p^-, p^+] \subset (1,
\infty)$ be a measurable function. Let us define the functional (the modular)

\[
A_{p(\cdot)}(f)=
\int_{\Omega} |f(x)|^{p(x)} \,dx.
\]
The set
\[
L^{p(\cdot)}(\Omega) = \{f:\Omega
\to \mathbb{R}:\text{$f$ is measurable on $\Omega$}, A_{p(\cdot)}(f) <
\infty\}
\]
equipped with the Luxemburg norm
\[
\|f\|_{p(\cdot),\Omega}= \inf \left\{\lambda>0 :
A_{p(\cdot)}\left(\dfrac{f}{\lambda}\right) \leq 1\right\}
\]
is a reflexive and separable Banach space and $C_0^\infty(\Omega)$ (the set of smooth functions with compact support) is dense in $L^{p(\cdot)}(\Omega)$. By the definition of the norm

\begin{equation}
\label{i0}
\min\left\{\|f\|^{p^-}_{p(\cdot), \Omega}, \|f\|^{p^+}_{p(\cdot), \Omega}\right\}
\leq A_{p(\cdot)}(f) \leq \max\left\{\|f\|^{p^-}_{p(\cdot), \Omega}, \|f\|^{p^+}_{p(\cdot), \Omega}\right\}.
\end{equation}
The dual of $L^{p(\cdot)}(\Omega)$ is the space $L^{p'(\cdot)}(\Omega)$ with the conjugate exponent  $p'=\dfrac{p}{p-1}$. For $f \in L^{p(\cdot)}(\Omega)$ and $g \in L^{p'(\cdot)}(\Omega)$, the generalized
H\"older inequality holds:

\begin{equation}
\label{eq:gen-Holder}
\int_{\Omega} |fg| \leq
\left(\frac{1}{p^-} + \frac{1}{(p')^-} \right) \|f\|_{p(\cdot),
\Omega} \|g\|_{p'(.), \Omega} \leq 2 \|f\|_{p(\cdot), \Omega}
\|g\|_{p'(\cdot), \Omega}.
\end{equation}
Let $p_1, p_2$ be two bounded
measurable functions in $\Omega$ such that $1<p_1(x) \leq p_2(x)$ a.e. in $\Omega$.  Then $L^{p_1(\cdot)}(\Omega)$ is continuously
embedded in $L^{p_2(\cdot)}(\Omega)$ and

\[
\|u\|_{p_1(\cdot), \Omega} \leq C(|\Omega|,
p_1^\pm, p_2^\pm) \|u\|_{p_2(\cdot), \Omega}, \qquad \forall \,u \in
L^{p_2(\cdot)}(\Omega).
\]
The variable Sobolev space $W_0^{1,p(\cdot)}(\Omega)$ is defined as the set of functions

\[
W_0^{1,p(\cdot)}(\Omega)= \{u: \Omega \mapsto
\mathbb{R}\ |\  u \in L^{p(\cdot)}(\Omega) \cap W_0^{1,1}(\Omega),
|\nabla u| \in L^{p(\cdot)}(\Omega) \}
\]
equipped with the norm

\[
\|u\|_{W_0^{1,p(\cdot)}(\Omega)}= \|u\|_{p(\cdot),\Omega} +
\|\nabla u\|_{p(\cdot), \Omega}.
\]
Under the assumptions $p\in C^{0}(\overline \Omega)$ and $\partial\Omega$ is Lipschitz-continuous, the
Poincar\'{e} inequality holds: there is a finite constant $C>0$ such that

\[
\|v\|_{p(\cdot),\Omega}\leq C\|\nabla v\|_{p(\cdot),\Omega}\qquad \forall \ v\in W_0^{1,p(\cdot)}(\Omega).
\]
It follows that $\|\nabla v\|_{p(\cdot),\Omega}$ is an equivalent norm of
$W^{1,p(\cdot)}_0(\Omega)$.

It is known that $C_0^{\infty}(\Omega)$ is dense in
$W_0^{1,p(\cdot)}(\Omega)$,
%and the Poincar\'e inequality holds
provided $p
\in C_{{\rm log}}(\overline{\Omega})$, {\it i.e.}, $p$ is continuous in $\overline{\Omega}$ with the logarithmic modulus of continuity:

\begin{equation}
\label{eq:log-cont}
|p(x_1)-p(x_2)| \leq \omega(|x_1-x_2|),
\end{equation}
where $\omega(\cdot)$ is a nonnegative function satisfying the condition
\[
\limsup_{\tau \to 0^+} \omega(\tau) \ln
\left(\frac{1}{\tau}\right)= C< \infty.
\]
For the study of parabolic problem \eqref{eq:main}, we need the spaces of functions depending on $(x,t)\in Q_T$:
\[
\begin{split}
 & \mathbf{V}_t(\Omega) = \{u: \Omega \mapsto \mathbb{R}\ |\  u \in L^2(\Omega)
\cap W_0^{1,1}(\Omega), |\nabla u|^{p(x,t)} \in L^{1}(\Omega)
\},\quad t\in (0,T),
\\
& \mathbf{W}_{p(\cdot)}(Q_T) = \{u : (0,T) \mapsto \mathbf{V}_t(\Omega) \ |\ u \in L^2(Q_T), |\nabla
u|^{p(x,t)} \in L^1(Q_T)\}.
\end{split}
\]

By $C^{0,1}(\overline Q_T)$ we denote the space of Lipschitz-continuous functions: a function $\phi$ is Lipschitz-continuous on $\overline Q_T$ if there exists a finite constant $L\geq 0$ such that

\[
|\phi(x,t)-\phi(y,\tau)|\leq Ld\left((x,t),(y,t)\right)\qquad \forall
(x,t),(y,\tau)\in \overline{Q}_T,
\]
where $d(\cdot,\cdot)$ is the Euclidean distance in $\mathbb{R}^{N+1}$.

By $C^{k+\alpha,\frac{k+\alpha}{2}}(Q_T)$, $k\in \mathbb{N}$, $\alpha\in (0,1)$, we denote the classical parabolic H\"older spaces.

\subsection{Assumptions and notation}
The solution of problem \eqref{eq:main} is understood in the following sense.
\begin{definition}
\label{def:1} A function $u$ is called strong solution of problem
\eqref{eq:main} if

\begin{enumerate}
\item $u\in C^0([0,T];L^2(\Omega))\cap
\mathbf{W}_{p(\cdot)}(Q_T)$, $u_t\in L^2(Q_T)$,

    \item for every $\phi\in \mathbf{W}_{p(\cdot)}(Q_T)$

    \begin{equation}
    \label{eq:def-1}
    \int_{Q_T}\left(u_t\phi+|\nabla u|^{p(z)-2}\nabla u\cdot \nabla \phi-f\phi -F(z,u,\nabla u)\phi\right)\,dz=0,
    \end{equation}

    \item $\|u(\cdot,t)-u_0(\cdot)\|_{2,\Omega}\to 0$ as $t\to 0^+$.
\end{enumerate}
\end{definition}

Let us assume that the data of problem \eqref{eq:main} satisfy the following conditions:

\begin{equation}
\label{eq:data-0}
\begin{split}
& \text{$\partial\Omega\in  C^{2+\alpha}$ with $\alpha\in (0,1)$},\quad q,s,a \in C^{0}(\overline Q_T), \quad \vec c \in C^0(\overline Q_T)^N,
\\
& \text{$p\in C^{0,1}(\overline{Q}_T)$ with the Lipschitz constant $L_p$},
\end{split}
\end{equation}
there exist positive constants $p^\pm$, $q^\pm$, $s^\pm$, $\mu$, $a^+$, $c_i^+$ such that $2\mu<p^--1$,
\begin{equation}
\label{eq:data}
\begin{split}
& p:\,\overline{Q}_T\mapsto [p^-,p^+],\qquad \frac{2(N+1)}{N+2}<p^-\leq
p^+<\infty,
\\
&
q:\,\overline{Q}_T\mapsto [q^-,q^+], \quad 1<q^-\leq q^+<\min\left\{p^-,1+p^-\frac{N+2}{2N}\right\}-2\mu,
\\
& s:\,\overline{Q}_T\mapsto [s^-,s^+], \quad 1<s^-\leq s^+<\infty,\quad s^+\leq p^--2\mu,
\\
& \max_{\overline{Q}_T}|a|=a^+,\quad
\max_{\overline{Q}_T}|c_i|=c_i^+.
\end{split}
\end{equation}

Under these assumptions, the solution to problem \eqref{eq:main} with $u_0\in W^{1,p(\cdot,0)}_0(\Omega)$ and $f\in L^2(Q_T)$ can be obtained as the pointwise limit of the family of classical solutions to the regularized problem \eqref{eq:main} (problem \eqref{eq:main-reg} below). The procedure of construction of strong solutions to the regularized  problems and the degenerate problem is presented in the next section.

\textbf{Notation.} By agreement, we denote $p_0(x)=p(x,0)$ and use the
abbreviations,

\[
D_jv=\dfrac{\partial v}{\partial x_j},\qquad |v_{xx}|^2=\sum_{i,j=1}^N\left(D_{ij}^2v\right)^2,\qquad \|v\|_{L^s(\Omega)}=\|v\|_{s,\Omega},\;\;s\geq 1.
\]

The notation $\partial \Omega\in C^{2+\alpha}$ (or $C^2$) means that for every
$x_0\in \partial\Omega$ there exists a ball $B_R(x_0)$ such that in the local
coordinate system $\{y_i\}$ centered at $x_0$ with $y_N$ pointing in the
direction of the exterior normal to $\partial\Omega$ at $x_0$, the set
$B_R(x_0)\cap \partial\Omega$ can be represented as the graph of a function
$\phi(y_1,\ldots,y_{N-1})\in C^{2+\alpha}$ (or $C^2$).

The symbol $C$ stands for constants that can be evaluated through the known quantities but whose exact values are unimportant. The value of $C$ may vary from line to line even inside the same formula.

\subsection{Regularized problem}

Consider the regularized problem

\begin{equation}
\label{eq:main-reg}
\begin{split}
& u_t-\operatorname{div}\left((\epsilon^2+|\nabla u|^{2})^{\frac{p(z)-2}{2}}\nabla u\right)=F_\epsilon(z,u,\nabla u)+f(z)\quad \text{in $Q_T$},
\\
& \text{$u=0$ on $\partial\Omega\times (0,T)$},
\\
& \text{$u(x,0)=u_0(x)$ in $\Omega$},\qquad \epsilon\in (0,1),
\end{split}
\end{equation}
where $f$ and $u_0$ satisfy \eqref{eq:intr-0} and the nonlinear source $F_\epsilon$ has the form

\begin{equation}
\label{eq:sources}
\begin{split}
F_\epsilon(z,u,\nabla u) &=F_{1\epsilon}(z,u)+ F_{2\epsilon}(z,\nabla u)
\\
&
\equiv a(z)(\epsilon^2+|u|^2)^{\frac{q(z)-2}{2}}u+ (\epsilon^2+|\nabla u|^2)^{\frac{s(z)-2}{2}}(\vec c(z),\nabla u).
\end{split}
\end{equation}
The strong solution of the regularized problem is understood in the sense of Definition \ref{def:1}. The data of problem \eqref{eq:main-reg} are smooth approximations of the data of problem \eqref{eq:main}. If the data of \eqref{eq:main} satisfy conditions \eqref{eq:intr-0}, \eqref{eq:data-0}, \eqref{eq:data} and $\sigma \geq 2$, there exist sequences $\{u_{0m}\}$, $\{f_m\}$ such that

\begin{equation}
\label{eq:data-reg-1}
\begin{split}
& u_{0m}\in C_0^{\infty}(\Omega),\qquad \text{$u_{0m}\to u_0$ in
$W^{1,r}_0(\Omega)$},
\\
& f_m\in C_0^{\infty}(Q_T),\qquad \text{$f_m\to f$ in $L^{\sigma}(Q_T)$},
\end{split}
\end{equation}
and the sequences of coefficients and exponents satisfying

\begin{equation}
\label{eq:data-reg-0}
\begin{split}
& p_k\in C^\infty (\overline Q_T), \quad \text{$p_k\nearrow p$ in $C^{0,1}(\overline Q_T)$},
\\
& s_k,q_k\in C^{\infty}(\overline Q_T), \quad \text{$q_k\to q$, $s_k\to s$ in $C^0(\overline Q_T)$},
\\
& a_k,c_{ik}\in C^{\infty}(\overline Q_T), \quad \text{$a_k\to a$, $c_{ik}\to c_i$ in $C^0(\overline Q_T)$}.
\end{split}
\end{equation}
We refer to \cite[Subsec.3.2]{Ar-Sh-2024} for the choice of these sequences. By agreement, we will use the notation $\textbf{data}$ for the set of functions $\{u_0,f,p,q,s,a,\vec c\}$ satisfying conditions \eqref{eq:intr-0}, \eqref{eq:data-0}, \eqref{eq:data}. The notation $\textbf{data}_m$ will be used for the sequence of smooth functions $\{u_{0m},f_m,p_m,q_m,s,a,\vec c_m\}$ whose elements approximate $\textbf{data}$ in the sense of \eqref{eq:data-reg-1}, \eqref{eq:data-reg-0}. Because of the choice of approximating sequence $\textbf{data}_m$, the norms of its elements converge to the corresponding norms of $\textbf{data}$,
\[
\|f_m\|_{\sigma,Q_T}=\|f\|_{\sigma,Q_T}+o(1),\qquad \|u_{0m}\|_{r,\Omega}=\|u_0\|_{r,\Omega}+o(1)\quad \text{as $m\to \infty$},
\]
and the same is true for the sequences of coefficients and exponents. Since $\|\textbf{data}_m\|$ tend to $\|\textbf{data}\|$, we may substitute $\|\textbf{data}_m\|$ by $1+\|\textbf{data}\|$ in the constants that appear in estimating the solutions of problem \eqref{eq:main-reg} and increase as $\|\textbf{data}_m\|$ grows.

The proof of existence of a strong solution to problem \eqref{eq:main} is given in \cite{Ar-Sh-2024}. It is proven first that problems \eqref{eq:main-reg} with smooth data $\textbf{data}_m$ admit classical solutions $u_{\epsilon m}$, and that the sequence $\{u_{\epsilon m}\}$ converges as $m\to \infty$ to a strong solution $u_\epsilon$ of problem \eqref{eq:main-reg}.

\begin{proposition}[\cite{Ar-Sh-2024},~Lemmas 3.3, 5.1]
\label{pro:existence-smooth-data}
Let $\partial\Omega\in C^{2+\alpha}$, $\alpha\in (0,1)$. Problem \eqref{eq:main-reg} with the data $\textbf{data}_m$ has a unique classical solution $u\equiv u_{\epsilon m}\in C^{2+\gamma,1+\frac{\gamma}{2}}(\overline Q_T)$ with some $\gamma\in (0,1)$, and $u(\cdot,t)\in C^3(\Omega)$ for every $t\in [0,T]$. Moreover, for every $m\in \mathbb{N}$

\begin{equation}
\label{eq:unif-reg}
\begin{split}
\|u_t\|^2_{2,Q_T} & + \sup_{(0,T)}\|u\|_{2,\Omega}^2 + \sup_{(0,T)}
 \int_{\Omega}(\epsilon^2 +|\nabla u|^2)^{\frac{p_m}{2}}\,dx +\int_{Q_T}(\epsilon^2+|\nabla u|^2)^{p_m-2}|u_{xx}|^2\,dz
 \\
 &
 \leq C\left(1+\int_\Omega |\nabla u_{0}|^{p(\cdot,0)}\,dx +\|f\|_{2,Q_T}^2\right)
\end{split}
\end{equation}
with an independent of $\epsilon$ and $m$ constant $C$. The solutions possess the property of higher integrability of the gradient: for every $\delta\in \left(0,\frac{4}{N+2}\right)$

\begin{equation}
\label{eq:high-integr-m}
\int_{Q_T} (\epsilon^2+|\nabla u|^2)^{p_m-2+\frac{\delta}{2}}|\nabla u|^2\,dx\leq C'
\end{equation}
with a constant $C'$ depending only on $\textbf{data}$ and $\delta$.
\end{proposition}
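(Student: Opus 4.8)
\emph{Sketch of the argument (the statement is taken from \cite{Ar-Sh-2024}; we only indicate the strategy).} The proposition splits into two essentially independent parts: (I) classical solvability and uniqueness of the non-degenerate regularized problem \eqref{eq:main-reg} with the smooth data $\textbf{data}_m$, and (II) the bounds \eqref{eq:unif-reg}, \eqref{eq:high-integr-m} with constants independent of $\epsilon$ and $m$.

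\textbf{Part (I).} Since $\epsilon^2+|\nabla u|^2\ge\epsilon^2>0$ and every exponent and coefficient in $\textbf{data}_m$ is $C^\infty$, equation \eqref{eq:main-reg} is a uniformly parabolic quasilinear equation with smooth structure, so a classical solution exists by the standard Leray--Schauder/continuity method (in the style of Ladyzhenskaya--Solonnikov--Ural'tseva) once suitable a priori estimates are available. These come in the usual chain: an $L^\infty$ bound on $u$ from a comparison/maximum-principle argument exploiting the sign of $F_\epsilon$ in \eqref{eq:sources} and the subcritical restriction $q^+<p^-$ from \eqref{eq:data}; an a priori bound on $\sup_{\overline Q_T}|\nabla u|$ (global Bernstein estimate together with boundary gradient estimates, which use $\partial\Omega\in C^{2+\alpha}$); a $C^{\beta,\beta/2}$ bound for $\nabla u$ from De Giorgi--Nash--Moser applied to the equation differentiated in $x_k$; and finally a parabolic Schauder bootstrap up to $C^{2+\gamma,1+\frac{\gamma}{2}}(\overline Q_T)$, with the interior regularity $u(\cdot,t)\in C^3(\Omega)$ coming from elliptic Schauder estimates applied to \eqref{eq:main-reg} for frozen $t$. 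Uniqueness follows by testing the equation for the difference of two solutions with that difference: strict monotonicity of $\xi\mapsto(\epsilon^2+|\xi|^2)^{(p-2)/2}\xi$ and the local Lipschitz dependence of $F_\epsilon$ on $(u,\nabla u)$ yield a Gronwall inequality.

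\textbf{Part (II).} First test \eqref{eq:main-reg} with $u$: using Poincar\'e and absorbing $F_\epsilon$ and $f\in L^2(Q_T)$ by Young's inequality together with the structure conditions \eqref{eq:data}, one gets $\sup_{(0,T)}\|u\|_{2,\Omega}^2+\int_{Q_T}(\epsilon^2+|\nabla u|^2)^{p_m/2}\,dz\le C$. From this zero-order energy, the parabolic higher-integrability scheme of Zhikov--Antontsev type (Caccioppoli inequality, parabolic Sobolev embedding, Gehring-type self-improvement) yields, as in \eqref{eq:high-int-intr}, the auxiliary bound $\int_{Q_T}(\epsilon^2+|\nabla u|^2)^{(p_m+\delta)/2}\,dz\le C$ for every $\delta\in(0,\tfrac4{N+2})$; this is the quantity that will tame the variable-exponent residuals. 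Next test with $u_t$: integration by parts in the principal part produces $\tfrac{d}{dt}\int_\Omega\tfrac1{p_m}(\epsilon^2+|\nabla u|^2)^{p_m/2}\,dx$ plus a residual $\sim\int_\Omega\partial_tp_m\,(\epsilon^2+|\nabla u|^2)^{p_m/2}\log(\epsilon^2+|\nabla u|^2)\,dx$ generated by the $t$-dependence of the exponent, and source contributions; the Lipschitz bound on $p_m$ and the elementary inequality $\tau^{p_m/2}|\log\tau|\le C_\delta(1+\tau^{(p_m+\delta)/2})$ reduce the residual to the auxiliary bound just quoted, while $\int F_{1\epsilon}u_t$, $\int F_{2\epsilon}u_t$, $\int fu_t$ are handled by Young's inequality using $s^+\le p^--2\mu$, $q^+<1+p^-\tfrac{N+2}{2N}-2\mu$ (the $q$-term through a Gagliardo--Nirenberg interpolation of $\|u\|_{q,\Omega}$) and $f\in L^2(Q_T)$. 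This produces $\|u_t\|_{2,Q_T}^2+\sup_{(0,T)}\int_\Omega(\epsilon^2+|\nabla u|^2)^{p_m/2}\,dx\le C$, the initial datum entering through $\int_\Omega|\nabla u_{0m}|^{p_0}\,dx\le C$, which holds since $u_{0m}\to u_0$ in $W^{1,r}_0(\Omega)$ with $r\ge R_p\ge p^+$. Finally, from $u_t\in L^2(Q_T)$ and $g:=F_\epsilon+f\in L^2(Q_T)$ one has $\operatorname{div}\big((\epsilon^2+|\nabla u|^2)^{(p_m-2)/2}\nabla u\big)=u_t-g$ bounded in $L^2(Q_T)$; a nonlinear second-order inequality --- the variable-exponent counterpart of $\|D^2w\|_2\lesssim\|\Delta w\|_2$ for $w|_{\partial\Omega}=0$, obtained by differentiating \eqref{eq:main-reg} in $x_k$, multiplying by a suitable multiplier, integrating over $\Omega$, and controlling the boundary integrals through $u|_{\partial\Omega}=0$ and the bounded second fundamental form of $\partial\Omega\in C^2$ --- upgrades this to $\int_{Q_T}(\epsilon^2+|\nabla u|^2)^{p_m-2}|u_{xx}|^2\,dz\le C$ modulo a residual carrying $\nabla_xp_m$, again absorbed via the auxiliary higher-integrability bound. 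This closes \eqref{eq:unif-reg}, and \eqref{eq:high-integr-m} is then recovered from \eqref{eq:unif-reg} by an interpolation argument of the same type that yields \eqref{eq:high-int-intr}.

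The main difficulties are precisely the two features peculiar to the variable exponent: the residual terms $\partial_tp_m(\cdots)\log(\cdots)$ and $\nabla_xp_m(\cdots)\log(\cdots)$, which lie below the natural energy space and force the use of the higher integrability of the gradient; and the global control of the boundary integrals in the second-order estimate, which is the reason $\partial\Omega\in C^2$ is needed. For constant $p$ both residuals vanish and the argument reduces to the global second-order estimate of Cianchi--Maz'ya \cite{Cianchi-Maz'ya-2019-1}.
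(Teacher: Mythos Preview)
The paper does not prove this proposition; it is quoted verbatim from \cite{Ar-Sh-2024} (Lemmas~3.3 and~5.1), so there is no ``paper's own proof'' to compare against beyond what the present paper reveals about the methods of that reference. Your sketch captures the architecture of Part~(I) accurately and is consistent with the standard quasilinear parabolic machinery invoked there.

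In Part~(II), however, the order in which you obtain the estimates does not match the mechanism actually used in \cite{Ar-Sh-2024,Ar-Sh-RACSAM-2023}, and this matters. You propose to obtain the \emph{global} higher integrability $\int_{Q_T}(\epsilon^2+|\nabla u|^2)^{(p_m+\delta)/2}\,dz\le C$ with $\delta\in(0,\tfrac4{N+2})$ via a Zhikov--Antontsev/Gehring self-improvement, and then use it to tame the $\nabla p_m$- and $\partial_t p_m$-residuals in the $u_t$-test and in the second-order estimate. A Gehring argument yields only a small unspecified $\delta>0$ and, in the variable-exponent setting, only interior bounds (cf.\ the discussion around \eqref{eq:high-int-intr} in the Introduction); it does not deliver the global estimate with the explicit range $(0,\tfrac4{N+2})$. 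In \cite{Ar-Sh-2024,Ar-Sh-RACSAM-2023} the logic runs the other way: the residual terms are absorbed \emph{simultaneously} with the derivation of the second-order estimate by means of the interpolation inequality of Lemma~\ref{le:racsam}, which bounds $\int_\Omega w_\epsilon^{(p+\rho-2)/2}|\nabla u|^2\,dx$ by $\delta\int_\Omega w_\epsilon^{(p-2)/2}|u_{xx}|^2\,dx+C$ and hence lets one close the Hessian estimate without any prior higher integrability. The global higher integrability \eqref{eq:high-integr-m} with the sharp range is then a \emph{consequence} of the second-order bound, exactly as you say in your last sentence --- but it is not available as an input earlier in the chain. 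If you reorganize Part~(II) so that the $u_t$-test and the second-order test are run together, with all logarithmic residuals controlled through Lemma~\ref{le:racsam} rather than through a separate Gehring step, your sketch becomes a faithful outline of the argument in \cite{Ar-Sh-2024}.
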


The solution $u_\epsilon$ of problem \eqref{eq:main-reg} corresponding to $\textbf{data}$ is the pointwise limit of the sequence $\{u_{\epsilon m}\}$ as $m\to \infty$. Moreover, $\nabla u_{\epsilon m}\to \nabla u_\epsilon$ in $L^{p(\cdot)}(Q_T)$ and a.e. in $Q_T$. Since $u_{\epsilon m}\in \mathbf{W}_{p_m}(Q_T)$ with possibly different exponents $p_m(z)$, the convergence of the sequence $\{u_{\epsilon m}\}$ in $\mathbf{W}_{p}(Q_T)$ with $p=\lim p_m$ requires a special justification. This fact follows from the property \eqref{eq:high-integr-m} of higher integrability of the gradients and the uniform convergence $p_m\to p$ in $Q_T$. Taking the limit as $m\to \infty$ of the sequence of classical solutions to problem \eqref{eq:main-reg} with $\textbf{data}_m$, we obtain a strong solution of problem \eqref{eq:main-reg} corresponding to $\textbf{data}$.

\begin{proposition}[\cite{Ar-Sh-2024}, Theorems~6.1-6.2]
\label{pro:exist-reg} Let conditions \eqref{eq:data-0}-\eqref{eq:data} be fulfilled. For every $f\in L^{2}(Q_T)$, $u_0\in
W^{1,p_0(\cdot)}_0(\Omega)$, and $\epsilon\in (0,1)$, problem
\eqref{eq:main-reg} has a strong solution

\[
u_\epsilon(z)=\lim_{m\to \infty}u_{\epsilon m}(z),\qquad \text{$\nabla u_{\epsilon m}\to \nabla u$ in $L^{p(\cdot)}(Q_T)$}.
\]
The solution $u_\epsilon$ satisfies the estimates

\begin{equation}
\label{eq:unif-eps}
\begin{split}
\operatorname{ess}\sup_{(0,T)}\|u_\epsilon\|_{2,\Omega} & + \operatorname{ess}\sup_{(0,T)}\|\nabla u_\epsilon\|_{p(\cdot),\Omega}+\|u_{\epsilon t}\|_{2,Q_T} +\int_{Q_T}(\epsilon^2+|\nabla u_\epsilon|^2)^{p-2}|
(u_{\epsilon})_{ xx}|^2\,dz
\leq C,
\end{split}
\end{equation}

\begin{equation}
\label{eq:high-int}
\int_{Q_T}(\epsilon^2+|\nabla u_\epsilon|^2)^{p(z)-2+\frac{\delta}{2}}|\nabla u_\epsilon|^2\,dz\leq C',
\end{equation}
with any $\delta \in \left(0,\frac{4}{N+2}\right)$. Moreover, for every $i=\overline{1,N}$,
\begin{equation}
\label{eq:high-diff-0}
\begin{split}
& (\epsilon^2+|\nabla u_\epsilon|^2)^{\frac{p(z)-2}{2}}D_i u_\epsilon\in L^2(0,T;W^{1,2}(\Omega)),
\\
&
 \left\|(\epsilon^2+|\nabla u_\epsilon|^2)^{\frac{p(z)-2}{2}}D_i
u_\epsilon\right\|_{L^2(0,T;W^{1,2}(\Omega))}\leq C''.
\end{split}
\end{equation}
The constants $C$, $C'$, $C''$ depend on $N$, $\partial \Omega$, $L_p$, the structural constants in \eqref{eq:data-0}-\eqref{eq:data}, $\|f\|_{2,Q_T}$, $\|\nabla u_0\|_{p(\cdot,0),\Omega}$, and are independent of $\epsilon$; the constant $C'$ depends also on $\delta$.
\end{proposition}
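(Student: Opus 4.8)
The plan is to recover $u_\epsilon$ as the limit, as $m\to\infty$, of the classical solutions $u_{\epsilon m}$ furnished by Proposition~\ref{pro:existence-smooth-data}, the essential point being that the a priori bounds \eqref{eq:unif-reg} and \eqref{eq:high-integr-m} are uniform in $m$. First I would read off from \eqref{eq:unif-reg} the compactness available for fixed $\epsilon$: $\{u_{\epsilon m}\}$ and $\{(u_{\epsilon m})_t\}$ are bounded in $L^\infty(0,T;L^2(\Omega))$ and in $L^2(Q_T)$, the fluxes $V_m:=(\epsilon^2+|\nabla u_{\epsilon m}|^2)^{\frac{p_m-2}{2}}\nabla u_{\epsilon m}$ have uniformly bounded $L^{p'_m(\cdot)}(Q_T)$-norms, and $(\epsilon^2+|\nabla u_{\epsilon m}|^2)^{\frac{p_m-2}{2}}(u_{\epsilon m})_{xx}$ is bounded in $L^2(Q_T)$. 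Since $p_m\nearrow p$ uniformly, I may fix a small $\eta>0$ with $p^--\eta>\frac{2N}{N+2}$; then $\{\nabla u_{\epsilon m}\}$ is bounded in the \emph{fixed} space $L^{p^--\eta}(Q_T)^N$, and the compact embedding $W^{1,p^--\eta}_0(\Omega)\hookrightarrow\hookrightarrow L^2(\Omega)$ together with the Aubin--Lions--Simon lemma gives, along a subsequence, $u_{\epsilon m}\to u_\epsilon$ strongly in $L^2(Q_T)$ and a.e.\ in $Q_T$, $(u_{\epsilon m})_t\rightharpoonup (u_\epsilon)_t$ in $L^2(Q_T)$, and $\nabla u_{\epsilon m}\rightharpoonup\nabla u_\epsilon$ in $L^{p^--\eta}(Q_T)^N$; a standard argument based on $(u_\epsilon)_t\in L^2(Q_T)$ then yields $u_\epsilon\in C^0([0,T];L^2(\Omega))$, and since $u_{\epsilon m}(\cdot,0)\to u_0$ in $L^2(\Omega)$ the initial condition $\|u_\epsilon(\cdot,t)-u_0\|_{2,\Omega}\to0$ follows.

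The decisive step — and the one I expect to be the main obstacle — is upgrading this to the a.e.\ convergence $\nabla u_{\epsilon m}\to\nabla u_\epsilon$ in $Q_T$: the natural energy space $\mathbf{W}_{p_m}(Q_T)$ varies with $m$, so the Minty--Browder monotonicity argument cannot be run in one fixed reflexive space. I would subtract the weak formulations of the equations for $u_{\epsilon m}$ and $u_{\epsilon\ell}$, test with $u_{\epsilon m}-u_{\epsilon\ell}$, and use the strict monotonicity of $\xi\mapsto(\epsilon^2+|\xi|^2)^{\frac{p-2}{2}}\xi$; this controls $\int_{Q_T}(V_m-V_\ell)\cdot(\nabla u_{\epsilon m}-\nabla u_{\epsilon\ell})\,dz$, modulo correction terms coming from the discrepancy $p_m\neq p_\ell$, by quantities that vanish as $m,\ell\to\infty$ thanks to the strong $L^2(Q_T)$-convergence of $u_{\epsilon m}$, the uniform continuity of $p_m\to p$, and, crucially, the uniform higher-integrability bound \eqref{eq:high-integr-m}, which for a suitable $\delta$ makes $\{|\nabla u_{\epsilon m}|^{p_m}\}$ equi-integrable in $L^1(Q_T)$ and so absorbs the mismatch terms. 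The source $F_\epsilon$ of \eqref{eq:sources}, whose growth exponents $q^+,s^+$ are strictly subcritical by \eqref{eq:data}, is then dealt with using the strong $L^2(Q_T)$-convergence of $u_{\epsilon m}$. One concludes that $\{\nabla u_{\epsilon m}\}$ is Cauchy in measure, hence a.e.\ convergent along a further subsequence. (A Boccardo--Murat-type device, testing with a truncation of $u_{\epsilon m}-u_\epsilon$ and applying a pointwise lemma, would do as well; in every version it is \eqref{eq:high-integr-m} that bridges the variable-exponent gap, and uniqueness of the strong solution of \eqref{eq:main-reg} for fixed $\epsilon$ — which follows from the monotonicity of the non-degenerate operator and the subordinate lower-order terms — then shows that the whole sequence converges.)

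With a.e.\ convergence of the gradients in hand, I would pass to the limit in the identity \eqref{eq:def-1} for $u_{\epsilon m}$. The fluxes $V_m$ converge a.e.\ to $(\epsilon^2+|\nabla u_\epsilon|^2)^{\frac{p-2}{2}}\nabla u_\epsilon$ by continuity (using $p_m\to p$) and, being dominated in $L^1(Q_T)$ by the equi-integrable family $\{|\nabla u_{\epsilon m}|^{p_m}\}$, converge in $L^1(Q_T)^N$ by Vitali's theorem; this lets me pass to the limit first against smooth test functions and then, by density and the higher integrability of $\nabla u_\epsilon$, against every $\phi\in\mathbf{W}_{p(\cdot)}(Q_T)$, while the source terms converge by Vitali and $f_m\to f$ in $L^2(Q_T)$. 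Thus $u_\epsilon$ is a strong solution of \eqref{eq:main-reg} in the sense of Definition~\ref{def:1}, and in addition $\nabla u_{\epsilon m}\to\nabla u_\epsilon$ in $L^{p(\cdot)}(Q_T)$ because $A_{p(\cdot)}(\nabla u_{\epsilon m}-\nabla u_\epsilon)\to0$ by Vitali (for $m$ large $\{|\nabla u_{\epsilon m}|^p\}$ is, up to bounded terms, dominated by $\{|\nabla u_{\epsilon m}|^{2p_m-2+\delta}\}$).

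Finally, I would deduce the estimates \eqref{eq:unif-eps}--\eqref{eq:high-diff-0} from \eqref{eq:unif-reg}--\eqref{eq:high-integr-m} by lower semicontinuity: $\operatorname{ess}\sup_{(0,T)}\|u_\epsilon\|_{2,\Omega}$ and $\|u_{\epsilon t}\|_{2,Q_T}$ are lower semicontinuous under the weak convergences above; the bound on $\operatorname{ess}\sup_{(0,T)}\|\nabla u_\epsilon\|_{p(\cdot),\Omega}$ follows from $\int_\Omega|\nabla u_\epsilon(\cdot,t)|^{p(\cdot,t)}\,dx\le\liminf_m\int_\Omega(\epsilon^2+|\nabla u_{\epsilon m}(\cdot,t)|^2)^{\frac{p_m}{2}}\,dx$ for a.e.\ $t$; and \eqref{eq:high-int} together with the weighted Hessian bound in \eqref{eq:unif-eps} follow from Fatou's lemma along the a.e.-convergent subsequence, the weak $L^2(Q_T)$-limit of $(\epsilon^2+|\nabla u_{\epsilon m}|^2)^{\frac{p_m-2}{2}}(u_{\epsilon m})_{xx}$ being identified by the a.e.\ convergence of the weights. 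Expanding $D_j\!\left((\epsilon^2+|\nabla u_\epsilon|^2)^{\frac{p-2}{2}}D_iu_\epsilon\right)$ and estimating its summands by the weighted Hessian and by \eqref{eq:high-int} gives \eqref{eq:high-diff-0}. All constants depend only on the listed quantities since those of Proposition~\ref{pro:existence-smooth-data} do; the restrictions on $q^+$ and $s^+$ in \eqref{eq:data} enter only through the a priori estimates, ensuring that $F_\epsilon$ is subordinate to them.
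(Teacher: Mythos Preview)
The paper does not actually prove this proposition: it is cited verbatim from \cite{Ar-Sh-2024}, Theorems~6.1--6.2, and only a one-paragraph sketch is given in the text preceding the statement. That sketch says exactly what you say: $u_\epsilon$ is the pointwise limit of the classical solutions $u_{\epsilon m}$, the main difficulty is that $u_{\epsilon m}\in\mathbf{W}_{p_m}(Q_T)$ with varying $p_m$, and the bridge is precisely the uniform higher-integrability bound \eqref{eq:high-integr-m} together with the uniform convergence $p_m\to p$.

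Your outline is therefore correct and matches the paper's indicated approach. You have in fact gone further than the paper does here, supplying the Aubin--Lions compactness, the Cauchy-in-measure argument for the gradients, the Vitali passage to the limit in the fluxes, and the lower-semicontinuity derivation of \eqref{eq:unif-eps}--\eqref{eq:high-diff-0}; all of these are the expected ingredients and your identification of \eqref{eq:high-integr-m} as the device that absorbs the $p_m\neq p$ mismatch is exactly the point the paper singles out.
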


The solution $u$ of the limit problem \eqref{eq:main} is obtained as the limit $\epsilon_k\to 0$ of a sequence $\{u_{\epsilon_k}\}$ of strong solutions to the regularized problem \eqref{eq:main-reg} corresponding to $\textbf{data}$.

\begin{proposition}[Sec.6, \cite{Ar-Sh-2024}]
\label{pro:existence-degenerate} Assume that conditions
\eqref{eq:data} are fulfilled and $\partial\Omega\in
C^{2+\alpha}$, $\alpha\in (0,1)$. Denote by $\{u_\epsilon\}$ the family of strong solutions of problem \eqref{eq:main-reg} with $f \in L^2(Q_T)$ and
$u_{0}\in W_0^{1,p_0(\cdot)}(\Omega)$. Then problem \eqref{eq:main} has a strong
solution

\[
u =\lim_{k\to \infty}u_{\epsilon_k},\qquad \text{$\nabla u_{\epsilon_k}\to \nabla u$ in $L^{p(\cdot)}(Q_T)$}.
\]
The solution $u$ satisfies the estimate

\begin{equation}
\label{eq:unif-est-degenerate}
\begin{split}
\operatorname{ess}\sup_{(0,T)}\|u\|_{2,\Omega} & +\operatorname{ess}\sup_{(0,T)}\|\nabla u\|_{p(\cdot),\Omega}+\|u_{t}\|_{2,Q_T}
\leq C
\end{split}
\end{equation}
with a constant $C$ depending only on  $N$, $\partial\Omega$, the structural
constants in conditions \eqref{eq:data}, $\|f\|_{2,Q_T}$, and  $\|\nabla
u_{0}\|_{p(\cdot,0),\Omega}$. The solution
possesses the property of  global higher integrability of the gradient:

\begin{equation}
\label{eq:higher-m} \int_{Q_T}|\nabla u|^{2(p(z)-1)+\delta}\,dz\leq C\qquad \text{with
any $\delta\in \left(0,\frac{4}{N+2}\right)$}
\end{equation}
and a constant $C$ depending on $\delta$ and the same quantities as the constant in \eqref{eq:unif-est-degenerate}.
Moreover,

\begin{equation}
\label{eq:imp-reg-previous}
|\nabla u|^{p(z)-2}\nabla u\in L^2(0,T;W^{1,2}(\Omega)) \quad \text{and} \quad \||\nabla u|^{p(z)-2}\nabla u\|_{L^2(0,T;W^{1,2}(\Omega))}\leq C
\end{equation}
with a constant $C$ depending on $N$, the constants in conditions \eqref{eq:data}, $\|\nabla u_{0}\|_{p_0(\cdot),\Omega}$ and  $\|f\|_{2,Q_T}$.
\end{proposition}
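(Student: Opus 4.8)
We sketch how the proposition follows from Proposition~\ref{pro:exist-reg} by letting $\epsilon\to0$. The plan is: (i) extract from $\{u_\epsilon\}$ a subsequence $\{u_{\epsilon_k}\}$ converging in suitable topologies to a function $u$; (ii) prove a.e.\ convergence of the gradients $\nabla u_{\epsilon_k}\to\nabla u$; (iii) pass to the limit in the weak formulation \eqref{eq:def-1} and transfer the uniform bounds of Proposition~\ref{pro:exist-reg} to $u$ by lower semicontinuity and Fatou's lemma.

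\emph{Step 1 (compactness).} The bounds \eqref{eq:unif-eps}, \eqref{eq:high-int}, \eqref{eq:high-diff-0} are uniform in $\epsilon$, so along a subsequence $\epsilon_k\to0$ we have $u_{\epsilon_k}\wstarconverge u$ in $L^\infty(0,T;L^2(\Omega))$, $\partial_t u_{\epsilon_k}\rightharpoonup u_t$ in $L^2(Q_T)$, $\nabla u_{\epsilon_k}\rightharpoonup\nabla u$ in $L^{p(\cdot)}(Q_T)$, the flux $A_{\epsilon_k}:=(\epsilon_k^2+|\nabla u_{\epsilon_k}|^2)^{\frac{p(z)-2}{2}}\nabla u_{\epsilon_k}\rightharpoonup\chi$ in $L^{p'(\cdot)}(Q_T)$, and $(\epsilon_k^2+|\nabla u_{\epsilon_k}|^2)^{\frac{p-2}{2}}\nabla u_{\epsilon_k}$ is bounded in $L^2(0,T;W^{1,2}(\Omega))^N$. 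Since $u_{\epsilon_k}$ is bounded in an $L^{p^-}(0,T;W^{1,p^-}_0(\Omega))$-type space and $\partial_t u_{\epsilon_k}$ in $L^2(Q_T)$, the Aubin--Lions--Simon lemma (using $W^{1,p^-}_0(\Omega)\hookrightarrow\hookrightarrow L^2(\Omega)\hookrightarrow W^{-1,2}(\Omega)$) yields $u_{\epsilon_k}\to u$ in $L^2(Q_T)$ and a.e.\ in $Q_T$; the equicontinuity of $t\mapsto u_{\epsilon_k}(\cdot,t)$ in $W^{-1,2}(\Omega)$ together with $u_{\epsilon_k}(\cdot,0)=u_0$ gives $u(\cdot,0)=u_0$.

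\emph{Step 2 (a.e.\ convergence of the gradients---the main obstacle).} This is the crucial step. To identify $\chi$ with $|\nabla u|^{p(z)-2}\nabla u$ one uses monotonicity. Testing \eqref{eq:main-reg} for $u_{\epsilon_k}$ against the bounded admissible function $T_\lambda(u_{\epsilon_k}-u)$ ($T_\lambda$ = truncation at level $\lambda$), using Step~1 (in particular $T_\lambda(u_{\epsilon_k}-u)\to0$ in every $L^q(Q_T)$ and the weak lower semicontinuity of the energy), and absorbing the contributions of $f$ and of the lower-order terms $F_{\epsilon_k}$ with the help of \eqref{eq:high-int} and the subcriticality conditions $q^+<p^-$, $s^+\le p^--2\mu$ of \eqref{eq:data}, one obtains
\[
\limsup_{k\to\infty}\int_{\{|u_{\epsilon_k}-u|\le\lambda\}}\bigl(A_{\epsilon_k}-|\nabla u|^{p(z)-2}\nabla u\bigr)\cdot(\nabla u_{\epsilon_k}-\nabla u)\,dz\le 0.
\]
The integrand is nonnegative by the pointwise monotonicity of $\xi\mapsto|\xi|^{p(z)-2}\xi$, hence it tends to $0$ in $L^1$ of the indicated set; since these sets exhaust $Q_T$ (because $u_{\epsilon_k}\to u$ a.e.), the standard monotonicity inequalities---valid both in the degenerate range $p\ge2$ and in the singular range $1<p<2$---give $\nabla u_{\epsilon_k}\to\nabla u$ in measure, and hence a.e.\ along a further subsequence. \textbf{The principal difficulty lies precisely here}: because $p(z)$ is variable and may cross the value $2$, the equation is simultaneously degenerate on part of $Q_T$ and singular on the rest, the natural energy space is not fixed, and without additional control $|\nabla u_{\epsilon_k}|^{p(z)}$ could concentrate. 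The higher integrability \eqref{eq:high-int} rules this out; note that the range $p^->\frac{2(N+1)}{N+2}$ is exactly what permits choosing $\delta$ with $2(p(z)-1)+\delta>p(z)$ on $\overline Q_T$, so that \eqref{eq:high-int} yields the equi-integrability of $|\nabla u_{\epsilon_k}|^{p(z)}$ needed to make the monotonicity argument and the subsequent density arguments close.

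\emph{Step 3 (passage to the limit and the estimates).} With $\nabla u_{\epsilon_k}\to\nabla u$ a.e., $A_{\epsilon_k}\to|\nabla u|^{p(z)-2}\nabla u$ a.e., so $\chi=|\nabla u|^{p(z)-2}\nabla u$; the equi-integrability from \eqref{eq:high-int} and Vitali's theorem upgrade this to convergence in $L^{p'(\cdot)}(Q_T)$, and likewise $F_{\epsilon_k}(z,u_{\epsilon_k},\nabla u_{\epsilon_k})\to a|u|^{q-2}u+|\nabla u|^{s-2}(\vec c,\nabla u)$ in $L^1(Q_T)$. Passing to the limit in \eqref{eq:def-1} for $u_{\epsilon_k}$ (first for smooth $\phi$, then by density in $\mathbf{W}_{p(\cdot)}(Q_T)$) shows that $u$ is a strong solution in the sense of Definition~\ref{def:1}. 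Estimate \eqref{eq:unif-est-degenerate} then follows from \eqref{eq:unif-eps} by weak and weak-$*$ lower semicontinuity of norms; \eqref{eq:higher-m} follows from \eqref{eq:high-int} by the a.e.\ convergence of the gradients and Fatou's lemma; and \eqref{eq:imp-reg-previous} follows because $(\epsilon_k^2+|\nabla u_{\epsilon_k}|^2)^{\frac{p-2}{2}}\nabla u_{\epsilon_k}$ is bounded in $L^2(0,T;W^{1,2}(\Omega))$ by \eqref{eq:high-diff-0} and converges a.e.\ to $|\nabla u|^{p(z)-2}\nabla u$, so that the latter lies in $L^2(0,T;W^{1,2}(\Omega))$ with the norm bounded by lower semicontinuity. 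Finally, $u_t\in L^2(Q_T)$ and $u\in L^\infty(0,T;L^2(\Omega))$ yield $u\in C^0([0,T];L^2(\Omega))$, and Step~1 gives $\|u(\cdot,t)-u_0\|_{2,\Omega}\to0$ as $t\to0^+$.
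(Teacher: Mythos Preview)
The paper does not supply its own proof of this proposition; it is quoted as a result of \cite[Sec.~6]{Ar-Sh-2024}. What the present paper does contain is a description of the method of \cite{Ar-Sh-2024} (see the paragraphs surrounding Propositions~\ref{pro:existence-smooth-data}--\ref{pro:existence-degenerate} and the discussion in the introduction), and your sketch matches that description closely: uniform estimates from Proposition~\ref{pro:exist-reg}, Aubin--Lions compactness, a.e.\ convergence of gradients obtained from a monotonicity argument fed by the higher-integrability estimate \eqref{eq:high-int}, then passage to the limit and lower semicontinuity/Fatou. So, modulo the fact that there is no in-paper proof to compare against, your outline is consistent with the advertised approach.

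Two small technical remarks. In Step~2 the integrand $(A_{\epsilon_k}-|\nabla u|^{p-2}\nabla u)\cdot(\nabla u_{\epsilon_k}-\nabla u)$ is not literally nonnegative, because the monotone map for fixed $\epsilon_k$ is $\xi\mapsto(\epsilon_k^2+|\xi|^2)^{(p-2)/2}\xi$; the clean inequality is $(A_{\epsilon_k}(\nabla u_{\epsilon_k})-A_{\epsilon_k}(\nabla u))\cdot(\nabla u_{\epsilon_k}-\nabla u)\ge0$, and one adds and subtracts $A_{\epsilon_k}(\nabla u)$, using that $A_{\epsilon_k}(\nabla u)\to|\nabla u|^{p-2}\nabla u$ strongly in $L^{p'(\cdot)}(Q_T)$ by dominated convergence and \eqref{eq:high-int}. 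Also, the claim that $p^->\frac{2(N+1)}{N+2}$ is ``exactly'' the threshold for $2(p(z)-1)+\delta>p(z)$ is overstated: that inequality only requires $p^->2-\delta$, i.e.\ $p^->\frac{2N}{N+2}$; the sharper bound $\frac{2(N+1)}{N+2}$ enters in \cite{Ar-Sh-2024} through the second-order estimates rather than the equi-integrability step. Neither point affects the soundness of your outline.
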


The procedure of obtaining a solution to problem \eqref{eq:main} as the limit of a family of classical solutions of the regularized problem \eqref{eq:main-reg} with the data $\textbf{data}_m$ allows one to reduce the study of the regularity properties of this solution to deriving uniform estimates on the classical solutions of the regularized problem \eqref{eq:main-reg} with smooth $\textbf{data}_m$.

\subsection{Main results}
\label{sec:results}

In the proof of the integrability of the gradient we distinguish between the cases \eqref{eq:intr-0} (a) and (b).

\begin{theorem}[Gradient integrability estimates - I]
\label{th:main-1}
Let conditions \eqref{eq:data-0}-\eqref{eq:data} be fulfilled and, in addition,
\[
q^+\leq 1+\frac{p^-}{N}.
\]
If

\[
\text{$f\in L^{N+2}(Q_T)$ \quad and \quad $|\nabla u_0|\in L^r(\Omega)$ with $r\geq \max\{2,p^+\}$},
\]
then problem \eqref{eq:main} has a strong solution $u=\lim\limits_{\epsilon\to 0^+}u_\epsilon$ that satisfies the estimates

\begin{equation}
\label{eq:main-1}
\operatorname{ess}\sup_{(0,T)}\|\nabla u\|_{r,\Omega}\leq C, \qquad
\int_{Q_T} |\nabla u|^{p(z)+r+\rho-2} \,dz \leq C_\rho \quad \text{for any $\rho \in \left(0, \frac{4}{N+2}\right)$}
\end{equation}
 with constants $C$, $C_\rho$ depending only on $\textbf{data}$, $r$, and $C_\rho$ depending also on $\rho$.
\end{theorem}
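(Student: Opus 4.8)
The plan is to follow the scheme of \cite{Ar-Sh-2024}: I would establish the two bounds in \eqref{eq:main-1} first for the classical solutions $u\equiv u_{\epsilon m}$ of the regularized problem \eqref{eq:main-reg} with the smooth data $\textbf{data}_m$, with constants independent of $\epsilon$ and $m$, and then pass to the limit $m\to\infty$ and $\epsilon\to0^+$ along the subsequences furnished by Propositions~\ref{pro:exist-reg} and \ref{pro:existence-degenerate}. Since $\nabla u_{\epsilon m}\to\nabla u$ a.e.\ in $Q_T$ and $p_m\to p$ uniformly, Fatou's lemma and the weak lower semicontinuity of the norms transfer both bounds to the limit solution, so everything reduces to an $\epsilon$- and $m$-independent a priori estimate for the smooth problem, for which all the integrals below are automatically finite because $u_{\epsilon m}\in C^{2+\gamma,1+\gamma/2}(\overline Q_T)$ with $u_{\epsilon m}(\cdot,t)\in C^3(\Omega)$. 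Throughout I abbreviate $V=(\epsilon^2+|\nabla u|^2)^{1/2}$ and $Q_t=\Omega\times(0,t)$.

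For the a priori estimate I would test the regularized equation with the regularized $r$-Laplacian $-\operatorname{div}(V^{r-2}\nabla u)$ and integrate over $\Omega$; this is admissible because $u(\cdot,t)\in C^3(\Omega)$ and $r\ge\max\{2,p^+\}$. Using $u_t=0$ on $\partial\Omega$, the parabolic term becomes, after one integration by parts in $x$, the exact derivative $\frac1r\frac{d}{dt}\int_\Omega V^r\,dx$. For the principal part I would substitute $u_t=\operatorname{div}(V^{p-2}\nabla u)+F_\epsilon+f$ and integrate by parts once more, relying on the integration-by-parts formula of Appendix~\ref{sec:intbyparts} and the hypothesis $\partial\Omega\in C^{2+\alpha}$; this produces a coercive term bounded below by $c\int_\Omega V^{p+r-4}|u_{xx}|^2\,dx$, equivalently $c'\|\nabla(V^{(p+r-2)/2})\|_{2,\Omega}^2$, a boundary integral controlled by the geometry of $\partial\Omega$, and residual terms carrying $\nabla p(z)$ that, for non-constant $p$, do not belong to the energy space prompted by the equation. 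As in Section~\ref{sec:Hessian}, these residual terms are absorbed into the coercive term by Young's inequality at the price of a lower-order remainder of the type $C\int_\Omega V^{p+r-2+\delta}\,dx$ with small $\delta$, which is in turn dominated by the higher-integrability estimate \eqref{eq:high-integr-m}.

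The contributions of $f$, $F_{1\epsilon}$ and $F_{2\epsilon}$ tested against the regularized $r$-Laplacian are precisely the estimates of Section~\ref{sec:est-source}. For the free term one uses $|\operatorname{div}(V^{r-2}\nabla u)|\lesssim V^{r-2}|u_{xx}|$ and the factorization $V^{r-2}|u_{xx}|=V^{(r-p)/2}\cdot V^{(p+r-4)/2}|u_{xx}|$, whence Young's inequality gives $\int_\Omega|f|\,V^{r-2}|u_{xx}|\,dx\le\eta\int_\Omega V^{p+r-4}|u_{xx}|^2\,dx+C_\eta\int_\Omega f^2V^{r-p}\,dx$; the first summand is absorbed into the coercive term, and the second, integrated in $t$ and estimated by H\"older's inequality in $Q_T$ with exponents $\frac{N+2}{2}$ and $\frac{N+2}{N}$, reduces to the $L^1(Q_T)$-integrability of $V^{(r-p)(N+2)/N}$. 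The latter is secured by the parabolic Gagliardo--Nirenberg inequality applied to $W=V^{(p+r-2)/2}$, using the $L^\infty(0,T;L^{2r/(p+r-2)}(\Omega))$-bound on $W$ (which is exactly the bound on $\operatorname{ess}\sup_{(0,T)}\int_\Omega V^r\,dx$ being established) and the $L^2(Q_T)$-bound on $\nabla W$ (the coercive term); the arithmetic of exponents closes for \emph{every} $r$ precisely because $f\in L^{N+2}(Q_T)$ with $N+2$ the parabolic critical exponent. The zero-order source $F_{1\epsilon}=a(\epsilon^2+|u|^2)^{(q-2)/2}u$ is treated by one further integration by parts (the boundary term vanishes since $u=0$ on $\partial\Omega$) and then by Sobolev's and Poincar\'e's inequalities combined with \eqref{eq:unif-reg}, the structural restrictions on $q$ in \eqref{eq:data}, and the extra assumption $q^+\le1+\frac{p^-}{N}$; the first-order source $F_{2\epsilon}=V^{s-2}(\vec c,\nabla u)$ is absorbed after Young's inequality and \eqref{eq:high-integr-m} thanks to $s^+\le p^--2\mu$, which makes the power of $|\nabla u|$ it generates strictly subordinate to the coercive term. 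After all these terms have been made either linear or sublinear in $\int_\Omega V^r\,dx$ and in the coercive quantity and absorbed accordingly, integrating over $(0,t)$ and a Gronwall-type argument using the datum $|\nabla u_0|\in L^r(\Omega)$ yield
\[
\operatorname{ess}\sup_{(0,T)}\int_\Omega V^r\,dx+\int_{Q_T}V^{p+r-4}|u_{xx}|^2\,dz\le C
\]
with $C$ independent of $\epsilon$ and $m$; in particular $\operatorname{ess}\sup_{(0,T)}\|\nabla u\|_{r,\Omega}\le C$.

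Finally, the second estimate in \eqref{eq:main-1} follows by interpolation: from the $L^\infty(0,T;L^r(\Omega))$-control of $\nabla u$ and the $L^2(Q_T)$-control of $\nabla(V^{(p+r-2)/2})$ just obtained, the same parabolic Gagliardo--Nirenberg inequality used to prove \eqref{eq:high-integr-m}, now applied to $W=V^{(p+r-2)/2}$, yields $\int_{Q_T}|\nabla u|^{p(z)+r+\rho-2}\,dz\le C_\rho$ for every $\rho\in(0,\frac4{N+2})$; passing to the limit as described preserves both bounds. In my view the principal obstacle is carrying out, simultaneously and with consistent bookkeeping of exponents, the absorption of the $\nabla p$-residual terms (which lie outside the energy space) and of the unbounded free term $f\in L^{N+2}(Q_T)$: both must be tamed through the single higher-integrability estimate \eqref{eq:high-integr-m}, and one must ensure that neither these, nor the $f$-contribution, nor the two nonlinear sources overrun the coercive term $\int_\Omega V^{p+r-4}|u_{xx}|^2\,dx$; the auxiliary restriction $q^+\le1+\frac{p^-}{N}$ is exactly what keeps the zero-order source within the reach of this scheme.
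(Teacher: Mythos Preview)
Your overall scheme---test the regularized equation by $-\operatorname{div}(V^{r-2}\nabla u)$, use the integration-by-parts formula of Appendix~\ref{sec:intbyparts} to extract the coercive term $\int_\Omega V^{p+r-4}|u_{xx}|^2\,dx$, estimate the $f$-contribution via H\"older with exponents $\tfrac{N+2}{2}$, $\tfrac{N+2}{N}$ and a Gagliardo--Nirenberg/Sobolev argument, then pass to the limits $m\to\infty$, $\epsilon\to0$ using Fatou---matches the paper's approach closely.

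There is, however, one genuine gap. You write that the $\nabla p$-residual terms and the remainder from $F_{2\epsilon}$ are ``dominated by the higher-integrability estimate \eqref{eq:high-integr-m}''. But \eqref{eq:high-integr-m} only controls $\int_{Q_T}V^{2(p-1)+\delta}\,dz$, whereas the residual terms (after Young's inequality) produce integrals of order $\int_\Omega V^{p+r-2+\delta}\,dx$, and the $F_{2\epsilon}$ remainder is $\int_{Q_T}V^{2s+r-p-2}\,dz$. For $r>p^+$ these exponents exceed $2(p-1)+\delta$, so \eqref{eq:high-integr-m} is useless here. The paper instead invokes the interpolation inequality of Lemma~\ref{le:racsam} (specialized as \eqref{eq:interpol-1}): for every $\delta>0$,
\[
\int_\Omega V^{p+r+\rho-4}|\nabla u|^2\,dx\le \delta\int_\Omega V^{p+r-4}|u_{xx}|^2\,dx+C,\qquad \rho\in\Bigl(0,\tfrac{4}{N+2}\Bigr),
\]
which feeds these terms back into the coercive term with an arbitrarily small coefficient and thus permits absorption. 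This interpolation step---not \eqref{eq:high-integr-m}---is what makes the argument close for \emph{every} $r\ge\max\{2,p^+\}$, and it is also what delivers the second estimate in \eqref{eq:main-1} directly (see \eqref{eq:final}), without a separate Gagliardo--Nirenberg step at the end.

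A minor difference: for $F_{1\epsilon}$ the paper does \emph{not} integrate by parts. It treats $F_{1\epsilon}$ exactly like $f$, arriving at $\|F_{1\epsilon}\|_{N+2,Q_T}^2$ in place of $\|f\|_{N+2,Q_T}^2$, and bounds this via the parabolic embedding \eqref{eq:DB}; the condition $q^+\le 1+\tfrac{p^-}{N}$ is precisely $(N+2)(q^+-1)\le p^-\tfrac{N+2}{N}$, which is what makes \eqref{eq:DB} applicable. Your integration-by-parts route for $F_{1\epsilon}$ is not obviously wrong, but it is less direct and you would still need the interpolation inequality above to close.
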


Theorem \ref{th:main-1} shows that when $f\in L^{N+2}(Q_T)$ the inclusion $|\nabla u(\cdot,t)|\in L^r(\Omega)$ is preserved in time whatever high the order $r$ is. As distinguished from this case, for $f\in L^\sigma(Q_T)$ with $\sigma \in (2,N+2)$ the inclusion $|\nabla u(\cdot,t)|\in L^r(\Omega)$ is preserved in time only for $r$ from a bounded interval whose limits depend on $\sigma$, $N$, and the properties of $p(z)$.

\begin{theorem}[Gradient integrability estimates - II]
\label{th:main-1-1}
Let conditions \eqref{eq:data-0}-\eqref{eq:data} be fulfilled and

\[
\text{$f\in L^{\sigma}(Q_T)$ with $\sigma \in (2,N+2)$ and $|\nabla u_0|\in L^r(\Omega)$}
\]
with $r$ satisfying the inequalities

\begin{equation}
\label{eq:filter}
R_p:=\max\{p^+,2\} \leq r  < R_\sigma:= \frac{ N(p^-(\sigma-1) -\sigma + 2)}{N +2 -\sigma}.
\end{equation}
If
\[
q^+\leq 1+p^-\frac{N+2}{\sigma N},
\]
then problem \eqref{eq:main} has a strong solution $u=\lim\limits_{\epsilon\to 0^+}u_\epsilon$ for which

\begin{equation}
\label{eq:main-1-1}
\operatorname{ess}\sup_{(0,T)}\|\nabla u\|_{r,\Omega}\leq C, \qquad \int_{Q_T} |\nabla u|^{p(z)+r+\rho-2} \,dz \leq C_\rho \quad \text{for any} \ \rho \in \left(0, \frac{4}{N+2}\right)
\end{equation}
with constants $C$, $C_\rho$ depending on $\textbf{data}$ and $r$, and $C_\rho$ depending also on $\rho$.
\end{theorem}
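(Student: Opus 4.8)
The plan is to prove the bounds in \eqref{eq:main-1-1} as estimates that are \emph{uniform in $\epsilon$ and $m$} on the classical solutions $u_{\epsilon m}$ of the regularized problem \eqref{eq:main-reg} with smooth data $\textbf{data}_m$ furnished by Proposition \ref{pro:existence-smooth-data}, and then to pass to the limit $m\to\infty$ and $\epsilon\to 0^+$ along the lines of Propositions \ref{pro:exist-reg} and \ref{pro:existence-degenerate}, invoking Fatou's lemma, the lower semicontinuity of the norms, and the already established a.e.\ convergence $\nabla u_{\epsilon m}\to\nabla u$. Writing $u=u_{\epsilon m}$ and $p=p_m$ for brevity, I would test the first equation in \eqref{eq:main-reg} with the regularized $r$-Laplacian $\phi=-\operatorname{div}\bigl((\beta^2+|\nabla u|^2)^{\frac{r-2}{2}}\nabla u\bigr)$, $\beta\in(0,1)$, and integrate over $\Omega$; since $u(\cdot,t)\in C^3(\Omega)$ this is admissible, but integration by parts against the principal part produces boundary integrals over $\partial\Omega$ controlled only with the help of the curvature of $\partial\Omega$ --- this is where $\partial\Omega\in C^{2+\alpha}$ and the integration by parts formula of Appendix \ref{sec:intbyparts} enter. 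The parabolic term produces $\frac1r\frac{d}{dt}\int_\Omega(\beta^2+|\nabla u|^2)^{\frac r2}\,dx$, and the principal part produces a nonnegative Hessian term $H(t)$ comparable to $\int_\Omega(\epsilon^2+|\nabla u|^2)^{\frac{p-2}{2}}(\beta^2+|\nabla u|^2)^{\frac{r-2}{2}}|u_{xx}|^2\,dx$, together with residual terms carrying a factor $|\nabla_x p|$ and a logarithmic weight $\ln(\epsilon^2+|\nabla u|^2)$ arising from differentiating the variable exponent.

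The residual terms lie outside the natural energy space, but the logarithmic weight is absorbed by an arbitrarily small power of $(\epsilon^2+|\nabla u|^2)$, and the resulting excess integral is then controlled by the higher integrability estimate \eqref{eq:high-int}, whose gain $\delta\in(0,\frac{4}{N+2})$ is chosen in terms of $L_p$ and $p^+-p^-$; the Lipschitz continuity of $p$ from \eqref{eq:data-0} is essential here. The nonlinear source terms are handled similarly: the $F_{2\epsilon}$-contribution, since $s^+\le p^--2\mu$ by \eqref{eq:data}, splits by Young's inequality into a part absorbed into $\eta H(t)$ and a part controlled by \eqref{eq:high-int}; the $F_{1\epsilon}$-contribution is estimated using the a priori bounds $\sup_t\|u\|_{2,\Omega}+\sup_t\|\nabla u\|_{p(\cdot),\Omega}\le C$ from \eqref{eq:unif-eps} together with the Gagliardo--Nirenberg inequality, and it is precisely at this point that the hypothesis $q^+\le 1+p^-\frac{N+2}{\sigma N}$ (jointly with the bound on $q^+$ in \eqref{eq:data}) keeps the resulting power of the gradient subcritical.

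The decisive term is the one containing $f$. Since $f(\cdot,t)\in L^\sigma(\Omega)$ with $\sigma<N+2$, one cannot integrate by parts to transfer a derivative onto $f$; instead I would bound the $f$-term by $C\int_\Omega|f|\,(\beta^2+|\nabla u|^2)^{\frac{r-2}{2}}|u_{xx}|\,dx$, split off a share of $H(t)$ by Young's inequality, and be left with a remainder of the form $\int_\Omega|f|^2(\epsilon^2+|\nabla u|^2)^{\frac{r-p}{2}}\,dx$, where $r-p\ge r-p^+\ge 0$. Hölder's inequality with exponent $\sigma/2$ reduces this to $\|f(\cdot,t)\|_{\sigma,\Omega}^2$ times a power of $\int_\Omega|\nabla u|^{\frac{(r-p)\sigma}{\sigma-2}}\,dx$, and this last integral I would interpolate, by Gagliardo--Nirenberg, between the quantity $Y(t):=\int_\Omega(\beta^2+|\nabla u|^2)^{\frac r2}\,dx$ being estimated and $\bigl\||\nabla u|^{\frac{p+r-2}{2}}\bigr\|_{2^{*},\Omega}^{2}$, the latter controlled by $H(t)$ via the Sobolev inequality. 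The restriction $r<R_\sigma$ in \eqref{eq:filter} is exactly the condition under which the power of $Y(t)$ produced by this interpolation is strictly less than one, so that the resulting inequality closes; note that $R_\sigma\to+\infty$ as $\sigma\to(N+2)^-$, in accordance with Theorem \ref{th:main-1}, while $R_\sigma\to p^-$ as $\sigma\to 2^+$, consistently with the borderline case $r=p(\cdot)$ treated in \cite{Ar-Sh-2024}.

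Collecting the estimates, one arrives at a differential inequality $\frac{d}{dt}Y(t)+c\,H(t)\le C\,g(t)\bigl(1+Y(t)\bigr)^{\kappa}+C\bigl(1+Y(t)\bigr)$, where $g\in L^1(0,T)$ is built from $\|f(\cdot,t)\|_{\sigma,\Omega}^{2}$ (integrable since $\sigma>2$) and the higher integrability quantities, and $\kappa<1$ thanks to $r<R_\sigma$; a Grönwall-type argument with $Y(0)\le C\bigl(1+\|\nabla u_0\|_{r,\Omega}^{r}\bigr)$ then yields $\sup_t Y(t)\le C$ and $\int_{Q_T}H(z)\,dz\le C$, with constants depending only on $\textbf{data}$ and $r$. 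The first bound in \eqref{eq:main-1-1} follows on letting $\beta\to 0$; the second follows from $\int_{Q_T}H\,dz\le C$ and $\sup_t Y(t)\le C$ by the same parabolic interpolation that underlies \eqref{eq:high-integr-m}: $|\nabla u|^{\frac{p+r-2}{2}}\in L^2(0,T;W^{1,2}(\Omega))\cap L^\infty(0,T;L^{\frac{2r}{p+r-2}}(\Omega))$ embeds into a Lebesgue space $L^{q_0}(Q_T)$ with $q_0>2$, which unwinds to $|\nabla u|^{p(z)+r+\rho-2}\in L^1(Q_T)$ for every $\rho\in(0,\frac{4}{N+2})$, the cut-off $\frac{4}{N+2}$ being the usual parabolic gain. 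Passing to the limits $m\to\infty$ and then $\epsilon\to 0^+$ transfers both bounds to the strong solution $u$. I expect the main obstacle to be the treatment of the $f$-term, and in particular the sharp determination of the endpoint $R_\sigma$ of the admissible interval for $r$, made more delicate by the fact that $p(z)-2$ has no sign on $Q_T$, so that all absorption and interpolation steps must be carried out with weights $(\epsilon^2+|\nabla u|^2)^{\frac{p-2}{2}}$ that may be simultaneously degenerate and singular on different parts of $Q_T$.
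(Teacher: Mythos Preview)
Your proposal is correct and follows the paper's strategy: multiply \eqref{eq:main-reg} by the regularized $r$-Laplacian, invoke the integration-by-parts formula of Appendix~\ref{sec:intbyparts} and the pointwise Hessian lower bound, absorb the residual variable-exponent terms via the interpolation inequality of Lemma~\ref{le:racsam}, and close the estimate on the $f$-term by H\"older with exponent $\sigma/2$ followed by an interpolation between $\int_\Omega w_\epsilon^{r/2}$ and the Hessian term; the threshold $r<R_\sigma$ is precisely what makes this close. The handling of $F_{1\epsilon}$, $F_{2\epsilon}$ and the passage to the limit via Fatou are as in the paper.

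Three organizational differences are worth noting. First, the paper uses the \emph{same} regularization parameter $\epsilon$ in the test function as in the equation, writing $\vec b=w_\epsilon^{\frac{r-2}{2}}\nabla u$ with $w_\epsilon=\epsilon^2+|\nabla u|^2$; this makes the pointwise algebra of Proposition~\ref{pro:pointwise} go through with a single vector $\vec\eta=\nabla u/\sqrt{w_\epsilon}$, whereas your choice of an independent $\beta$ would produce mixed weights and a messier coercivity computation. Second, the paper integrates the differential inequality in $t$ \emph{before} estimating $\mathcal I=\int_{Q_T}f^2w_\epsilon^{\frac{r-p}{2}}\,dz$, and then applies Young's inequality directly to the space--time quantities $\sup_t\Pi_2$ and $\int_0^T\Pi_1$, so that absorption is immediate and no Gr\"onwall iteration is needed. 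Third, and most relevant for sharpness, the paper does not use the critical embedding into $L^{2^*}$ but rather the subcritical embedding $W^{1,2\mu}(\Omega)\hookrightarrow L^2(\Omega)$ with a free parameter $\mu\in\bigl(\tfrac{N}{N+2},1\bigr)$; the choice $\beta=\tfrac{2\mu}{1-\mu}$ in \eqref{imp:ineq:modi} then produces the admissibility condition \eqref{eq:r-modi}, and letting $\mu\searrow\tfrac{N}{N+2}$ recovers exactly the open endpoint $R_\sigma$ in \eqref{eq:filter}. This device also covers $N=2$ uniformly, where your $2^*$-based interpolation would require a separate treatment.
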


\begin{theorem}[Second-order regularity]
\label{th:main-2}
Let the conditions of Theorem \ref{th:main-1} or Theorem \ref{th:main-1-1} be fulfilled. Then

\begin{equation}
\label{eq:main-2}
\begin{split}
& {\rm (i)} \qquad
|\nabla u|^{\frac{p(z)+r-2}{2}}\in L^2(0,T;W^{1,2}(\Omega)),\qquad \left\||\nabla u|^{\frac{p(z)+r-2}{2}}\right\|_{L^2(0,T;W^{1,2}(\Omega))}\leq C,
\\
& {\rm (ii)} \qquad |\nabla u|^{\frac{p(z)+r}{2}-2}\nabla u\in L^2(0,T;W^{1,2}(\Omega))^N,\qquad \left\||\nabla u|^{\frac{p(z)+r}{2}-2}\nabla u\right\|_{L^2(0,T;W^{1,2}(\Omega))^N}\leq C
\end{split}
\end{equation}
with a constant $C$ depending only on $\textbf{data}$ and $r$.
\end{theorem}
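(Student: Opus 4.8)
The plan is to establish the estimates \eqref{eq:main-2} first for the classical solutions $u_{\epsilon m}$ of the regularized problem \eqref{eq:main-reg} with smooth data $\textbf{data}_m$ — for which every quantity in \eqref{eq:main-2} is a smooth function, since $u_{\epsilon m}\in C^{2+\gamma,1+\gamma/2}(\overline Q_T)$ and $u_{\epsilon m}(\cdot,t)\in C^3(\Omega)$ by Proposition \ref{pro:existence-smooth-data} — and then to pass to the limit $m\to\infty$ with $\epsilon$ fixed, and finally $\epsilon\to 0^+$. At the level of the classical solutions the proof reduces to a pointwise differential identity together with the already available integral bounds.

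First I would record the chain rule, writing componentwise (suppressing the subscripts $\epsilon m$ for readability)
$$D_k\bigl((\epsilon^2+|\nabla u|^2)^{\frac{p_m+r-2}{4}}\bigr)=\frac{p_m+r-2}{4}(\epsilon^2+|\nabla u|^2)^{\frac{p_m+r-6}{4}}D_k(|\nabla u|^2)+\frac{1}{4}(\epsilon^2+|\nabla u|^2)^{\frac{p_m+r-2}{4}}\ln(\epsilon^2+|\nabla u|^2)\,D_k p_m,$$
and an analogous identity for $D_k\bigl((\epsilon^2+|\nabla u|^2)^{\frac{p_m+r-4}{4}}D_j u\bigr)$. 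Since $D_k(|\nabla u|^2)=2\sum_j D^2_{jk}u\,D_j u$ and $|\nabla u|^2\le\epsilon^2+|\nabla u|^2$, the "principal" contribution to $|\nabla(\epsilon^2+|\nabla u|^2)^{(p_m+r-2)/4}|^2$, and likewise to $|\nabla((\epsilon^2+|\nabla u|^2)^{(p_m+r-4)/4}\nabla u)|^2$, is bounded — up to a constant depending only on $p^\pm,r$ — by $(\epsilon^2+|\nabla u|^2)^{\frac{p_m+r-4}{2}}|u_{xx}|^2$. The "residual" contribution carries a logarithmic factor and the (uniformly bounded) Lipschitz slope $|D_k p_m|$, and, by the elementary bound $t(\ln(1+t))^2\le C_\rho\,t^{1+\rho}$ for $t\ge 1$ together with boundedness for $t\le 1$, it is bounded by $C_\rho\,(\epsilon^2+|\nabla u|^2)^{\frac{p_m+r-2+\rho}{2}}$ for every $\rho\in(0,\frac{4}{N+2})$.

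Integrating over $Q_T$, the integral of the principal term is controlled by the Hessian integral estimate produced in Section \ref{sec:Hessian} — the outcome of testing \eqref{eq:main-reg} with the regularized $r$-Laplacian $-\operatorname{div}((\epsilon^2+|\nabla u|^2)^{\frac{r-2}{2}}\nabla u)$ and integrating by parts in $\Omega$, after the residual and source terms arising there have themselves been absorbed using \eqref{eq:high-integr-m} and the integral estimates of Section \ref{sec:est-source}; this gives a bound by a constant depending only on $\textbf{data}$ and $r$. The integral of the residual term is, by the previous paragraph, bounded by $C_\rho\int_{Q_T}(\epsilon^2+|\nabla u_{\epsilon m}|^2)^{\frac{p_m+r-2+\rho}{2}}\,dz$, which is uniformly bounded in $\epsilon,m$ by the gradient higher-integrability estimate \eqref{eq:main-1} (respectively \eqref{eq:main-1-1}) once one fixes a single admissible value of $\rho$, say $\rho=\frac{2}{N+2}$. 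Consequently the $L^2(0,T;W^{1,2}(\Omega))$-norms of $(\epsilon^2+|\nabla u_{\epsilon m}|^2)^{\frac{p_m+r-2}{4}}$ and of $(\epsilon^2+|\nabla u_{\epsilon m}|^2)^{\frac{p_m+r-4}{4}}\nabla u_{\epsilon m}$ are bounded by a constant $C=C(\textbf{data},r)$ independent of $\epsilon$ and $m$.

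Finally I would pass to the limit. For fixed $\epsilon$, the bounded families above converge weakly in $L^2(0,T;W^{1,2}(\Omega))$ along a subsequence; since $\nabla u_{\epsilon m}\to\nabla u_\epsilon$ a.e. in $Q_T$ and $p_m\to p$ uniformly (Propositions \ref{pro:existence-smooth-data}, \ref{pro:exist-reg}), the weak limits must coincide with $(\epsilon^2+|\nabla u_\epsilon|^2)^{\frac{p+r-2}{4}}$ and $(\epsilon^2+|\nabla u_\epsilon|^2)^{\frac{p+r-4}{4}}\nabla u_\epsilon$, and weak lower semicontinuity of the norm transfers the bound to $u_\epsilon$ with the same constant. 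Repeating the argument along a sequence $\epsilon_k\to 0$, using $\nabla u_{\epsilon_k}\to\nabla u$ a.e. (Proposition \ref{pro:existence-degenerate}) and $(\epsilon_k^2+|\nabla u_{\epsilon_k}|^2)^{\frac{p+r-2}{4}}\to|\nabla u|^{\frac{p+r-2}{2}}$ a.e., yields \eqref{eq:main-2}(i), and likewise (ii). The main obstacle is the control of the residual variable-exponent terms: they lie outside the natural energy space prompted by the equation, and it is precisely the higher-integrability estimate \eqref{eq:main-1}/\eqref{eq:main-1-1} — itself the delicate, interpolation-based part of the paper — that is needed to absorb the logarithmic factors with a constant independent of $\epsilon,m$. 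A secondary technical point is the identification of the weak $W^{1,2}$-limits with the stated expressions, which rests on the a.e. convergence of the gradients and, if the limiting inclusions are to be phrased through test functions, on the integration-by-parts formula of Appendix \ref{sec:intbyparts}.
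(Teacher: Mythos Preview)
Your proposal is correct and follows essentially the same route as the paper: derive uniform $L^2(Q_T)$ bounds on $D_i\bigl(w_{\epsilon m}^{(p_m+r-2)/4}\bigr)$ and $D_i\bigl(w_{\epsilon m}^{(p_m+r-4)/4}D_j u_{\epsilon m}\bigr)$ via the chain rule, controlling the Hessian piece by the estimate obtained from testing with the regularized $r$-Laplacian and the logarithmic residue by higher integrability, then pass to the limit $m\to\infty$, $\epsilon\to 0^+$ using weak $L^2$ compactness and a.e.\ convergence of the gradients to identify the limits. Two minor remarks: for part (i) the paper already packages the bound on $\|\nabla(w_{\epsilon m}^{(p_m+r-2)/4})\|_{2,Q_T}$ inside Lemmas \ref{le:est-full}--\ref{le:est-full-1} (equations \eqref{eq:final-2}, \eqref{eq:final-2-non}), so your chain-rule derivation is a slight re-doing of work done in Section \ref{sec:Hessian}; and the integration-by-parts formula of Appendix \ref{sec:intbyparts} plays no role in the identification of the weak limits --- that step needs only the definition of the distributional derivative against $\phi\in C_c^\infty(Q_T)$.
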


\begin{remark}
\label{rem:admiss}
Inequalities \eqref{eq:filter} furnish the interval of the permissible values of $r$ in case of low integrability of the free term $f$. %According to \eqref{eq:filter},
\begin{itemize}
    \item[{\rm (i)}] When $\sigma\nearrow N+2$, the supremum of the admissible values $R_\sigma$ tends to $+\infty$  independently on the properties of $p(z)$.
    \item[{\rm (ii)}]  For $\sigma \in (2,N+2)$, inequality \eqref{eq:filter} entails conditions on the exponent $p(z)$:
\[
\begin{split}
{\rm (a)} & \qquad \text{if $p^+\leq 2$, then $\displaystyle p^-  >  \frac{\sigma}{\sigma-1}- \frac{2(\sigma-2)}{N(\sigma-1)}$},
\\
{\rm (b)} & \qquad \text{if $p^+>2$, then $\displaystyle p^+  < \frac{ N(p^-(\sigma-1) -\sigma + 2)}{N +2 -\sigma}$}.
\end{split}
\]
Moreover, condition {\rm (b)} can be transformed into the restriction on the oscillation $p^+-p^-$ of $p(z)$ in $Q_T$.

\item[{\rm (iii)}] As $R_\sigma\searrow p^-$ when $\sigma \searrow 2$, the interval $[R_p, R_2)$ of the admissible values of $r$ is empty. This fact directs to considering the case $r \leq R_p$,  $\sigma=2$, which we leave as an open problem for future study. We notice that in the limiting case $\sigma \searrow 2$, the estimates in \eqref{eq:main-1-1} and \eqref{eq:main-2} match the results of \cite[Theorems 2.1 and 2.2]{Ar-Sh-2024} for $r=p(z)$, and \cite{Cianchi-Maz'ya-2019-1} for $r=p$.
\end{itemize}
\end{remark}

\begin{remark}
    In the special case of constant $p > 2$, condition \eqref{eq:filter} becomes independent of $\sigma$ and reads  $p>\dfrac{N}{N+1}$.
\end{remark}

\begin{remark}
\label{rem:reg-eqn}
The assertions of Theorems \ref{th:main-1}, \ref{th:main-1-1}, \ref{th:main-2} follow from uniform estimates for the solutions of the regularized problem \eqref{eq:main-reg} and the limit passages: first as $m\to \infty$ (approximation of the data), and then as $\epsilon\to 0^+$. The limit as $m\to \infty$ with a fixed $\epsilon>0$ is a strong solution $u_\epsilon$ of the nondegenerate regularized problem \eqref{eq:main-reg} corresponding to $\textbf{data}$. For the solution $u_\epsilon$ of the regularized problem with $\textbf{data}$, Theorems \ref{th:main-1} - \ref{th:main-1-1} hold with $|\nabla u|$ replaced by $(\epsilon^2+|\nabla u|^2)^{\frac{1}{2}}$. For $u_\epsilon$, the inclusions

\[
\begin{split}
& D_i\left((\epsilon^2+|\nabla u_{\epsilon}|^2)^{\frac{p(z)+r-2}{4}}\right)\in L^2(Q_T),
\\
& D_i\left((\epsilon^2+|\nabla u_\epsilon|^2)^{\frac{p(z)+r}{4}-1}D_ju_{\epsilon }\right)\in L^{2}(Q_T),\quad i,j=\overline{1,N},
\end{split}
\]
are proven at the first step of the proof of Theorem \ref{th:main-2}.
\end{remark}

\begin{corollary}
\label{cor:global-Holder}
Under the conditions of Theorem \ref{th:main-1} or \ref{th:main-1-1} estimates \eqref{eq:unif-est-degenerate}, and \eqref{eq:main-1} or \eqref{eq:main-1-1} hold. Fix $\rho\in \left(0, \frac{4}{N+2}\right)$ and consider $u(z)$ as an element of the anisotropic Sobolev space
\[
\mathcal{V}_\rho=\left\{u(z)\in L^{2}(Q_T):\,u_t\in L^2(Q_T),\, D_i u\in L^{p^-+r+\rho-2}(Q_T), \;i=\overline{1,N}\right\},\quad r\geq 2.
\]
Assume that $\dfrac{1}{2}+\dfrac{N}{p^-+r+\rho-2}<1$, which is equivalent to $r>2N-p^--\rho+2$. According to \cite[Th.5]{H-Sch-2009},
$\mathcal{V}_\rho$ is continuously embedded into the H\"older space $C^{\beta}(\overline Q_T)$ with
$\beta=1-\dfrac{N}{p^-+r+\rho-2}$.
It follows that if in the conditions of Theorem \ref{th:main-1} or \ref{th:main-1-1}, $r>2N-p^--\rho+2$, then $u\in C^{\beta}(\overline Q_T)$.
\end{corollary}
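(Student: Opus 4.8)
The plan is to verify that the strong solution $u$ produced by Theorem \ref{th:main-1} or Theorem \ref{th:main-1-1} belongs to the anisotropic space $\mathcal{V}_\rho$ and then to invoke the Sobolev-type embedding of \cite[Th.5]{H-Sch-2009}; the corollary is essentially a bookkeeping consequence of the estimates already established.

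First I would check the memberships $u\in L^2(Q_T)$ and $u_t\in L^2(Q_T)$. These are immediate from estimate \eqref{eq:unif-est-degenerate}: the bound $\operatorname{ess}\sup_{(0,T)}\|u\|_{2,\Omega}\leq C$ gives $u\in L^\infty(0,T;L^2(\Omega))\hookrightarrow L^2(Q_T)$, and $\|u_t\|_{2,Q_T}\leq C$ appears in the same display. Next I would deduce $D_iu\in L^{p^-+r+\rho-2}(Q_T)$ for $i=\overline{1,N}$ from the higher-integrability bound in \eqref{eq:main-1} (respectively \eqref{eq:main-1-1}). Since $p(z)\geq p^-$ on $\overline Q_T$, on the set $\{|\nabla u|\geq 1\}$ one has $|\nabla u|^{p^-+r+\rho-2}\leq |\nabla u|^{p(z)+r+\rho-2}$, while on $\{|\nabla u|<1\}$ the integrand is at most $1$; hence
\[
\int_{Q_T}|\nabla u|^{p^-+r+\rho-2}\,dz\leq \int_{Q_T}|\nabla u|^{p(z)+r+\rho-2}\,dz+|Q_T|\leq C_\rho+|Q_T|<\infty .
\]
Because $r\geq 2$ and $p^->1$, the exponent satisfies $p^-+r+\rho-2>1$, so this is a genuine Lebesgue space and the inclusion $u\in\mathcal{V}_\rho$ is justified.

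Finally, under the standing assumption $\tfrac12+\tfrac{N}{p^-+r+\rho-2}<1$, which rearranges exactly to $r>2N-p^--\rho+2$, the embedding theorem \cite[Th.5]{H-Sch-2009} applies to $\mathcal{V}_\rho$ and yields a continuous embedding $\mathcal{V}_\rho\hookrightarrow C^\beta(\overline Q_T)$ with $\beta=1-\tfrac{N}{p^-+r+\rho-2}$; note $\beta\in(0,1)$ since $r>2N-p^--\rho+2>N-p^--\rho+2$. In particular $u\in C^\beta(\overline Q_T)$, which is the claimed assertion. There is no real obstacle in this argument: the only step requiring (minor) care is the passage from the variable-exponent gradient integrability of \eqref{eq:main-1}/\eqref{eq:main-1-1} to the fixed-exponent integrability required by $\mathcal{V}_\rho$, handled by the splitting above using $p(z)\geq p^-$ and $|Q_T|<\infty$.
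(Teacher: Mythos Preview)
Your proposal is correct and follows exactly the same reasoning as the paper: the corollary in the paper is written in a self-proving style, and your argument simply spells out why $u\in\mathcal{V}_\rho$ (via \eqref{eq:unif-est-degenerate} and the variable-to-constant exponent splitting from \eqref{eq:main-1}/\eqref{eq:main-1-1}) before invoking the embedding \cite[Th.5]{H-Sch-2009}.
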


The main ingredients of the proofs of Theorems \ref{th:main-1}, \ref{th:main-1-1}, \ref{th:main-2} are a  formula of integration by parts in $\Omega$, and an interpolation inequality, which hold for every sufficiently smooth function. The next section is entirely devoted to deriving the formula of integration by parts, pointwise and integral estimates for the elements of the Hessian matrix. We adapt the arguments from \cite[Subsec.3.1.1]{Grisvard-2011} to show that for every constant $r\geq \max\{2,p^+\}$ and every sufficiently smooth function $u$, which vanishes on $\partial\Omega$,

\begin{equation}
\label{eq:by-parts-intr}
\begin{split}
\int_{\Omega}\Delta^{(\epsilon)}_{p(\cdot)}u & \Delta^{(\epsilon)}_{r}u \,dx
\geq C_0\int_\Omega (\epsilon^2+|\nabla u|^2)^{\frac{p(x)+r}{2}-2}\operatorname{trace}\mathcal{H}^2(u)\,dx +\Phi(x,u,\nabla u),
\end{split}
\end{equation}
where $\mathcal{H}(u)$ is the Hessian matrix of $u$, $C_0$ is a constant independent of $u$.  The function $\Phi$ is independent of the components of $\mathcal{H}(u)$ and can be controlled with the help of the following interpolation inequality: for every $\delta>0$ and any $\rho \in \left(0,\frac{4}{N+2}\right)$

\begin{equation}
\label{eq:inter-intr}
\int_{\Omega} (\epsilon^2+|\nabla u|^{2})^{\frac{p(x)+r+\rho-4}{2}}|\nabla u|^2\,dx\leq \delta \int_{\Omega}(\epsilon^2+|\nabla u|^{2})^{\frac{p(x)+r}{2}-2}\operatorname{trace}\mathcal{H}^2(u)\,dx +C
\end{equation}
with a constant $C$ depending only on $\rho$, $N$, $p^\pm$, $r$, $\delta$, $\|u\|_{2,\Omega}$, and the Lipschitz constant of $p(x)$. Multiplying equation \eqref{eq:main-reg} by $\Delta_r^{(\epsilon)}u$ and using \eqref{eq:by-parts-intr}, \eqref{eq:inter-intr} we obtain a differential inequality for the function $\|(\epsilon^2+|\nabla u|^2)\|_{\frac{r}{2},\Omega}^{\frac{r}{2}}$.  The term $(f+F_\epsilon(z,u,\nabla u))\Delta_r^{(\epsilon)}u$ on the right-hand side is estimated by the terms on the left-hand side of the appearing inequality, provided that the exponents $p$, $q$, $s$, and the functions $f$, $u_0$ satisfy the conditions of Theorem \ref{th:main-1} or \ref{th:main-1-1}. The estimates on the integral of $f\Delta ^{(\epsilon)}_ru$ require different conditions on $p$, $\sigma$, $r$ in the cases $\sigma=N+2$ and $\sigma\in (2,N+2)$. The proof of the second-order regularity \eqref{eq:main-2} relies on the uniform estimates of the type \eqref{eq:by-parts-intr} and the pointwise convergence of the gradients of the classical solutions to the regularized problem.

\section{Estimates involving $D_{ij}^2u$}
\label{sec:Hessian}

\subsection{Integration-by-parts formula}
We begin by formulating an integration by parts formula that involves two arbitrary smooth vectors. Assume $\partial\Omega\in C^2$ and denote by $\vec \nu$ the exterior unit normal to $\partial \Omega$. Fix a point $P\in \partial\Omega$, take $C^2$ curves $\{l_1,\ldots,l_{N-1}\}$ orthogonal at every point in a neighborhood of $P$.  Denote by $\{\vec \tau_1,\ldots,\vec \tau_{N-1}\}$ the unit vectors tangent to $l_i$, and by $s_i$ the curve length along $l_i$. %Let $\vec \nu$ be the normal vector to $\partial\Omega$ at $P$.

At every point of $\partial\Omega$ any smooth vector $\vec a$ can be decomposed into the sum of the tangent and normal components:

\[
\begin{split}
& \vec a=\vec a_\tau +a_\nu\vec \nu,
\quad  \vec a_\tau=\sum_{j=1}^{N-1} a_j\vec \tau_j,\quad a_j=(\vec a,\vec \tau_j),\quad a_\nu=(\vec a,\vec \nu).
\end{split}
\]

\begin{proposition}\label{int-by-parts-pro}
    Let $\partial\Omega\in C^2$ and $\vec a$, $\vec b$ be arbitrary vectors with the components
\begin{equation}
\label{eq:vec-reg}
\notag
a_i\in C^0(\overline{\Omega})\cap C^1(\Omega), \qquad b_i\in C^1(\overline{\Omega})\cap C^2(\Omega).
\end{equation}
Then,
\begin{equation}
\label{eq:double-ee}
\begin{split}
\int_\Omega \operatorname{div}\vec a \operatorname{div}\vec b\,dx  &= -\int_{\partial\Omega} \left(\vec a_\tau \nabla _\tau (\vec b\cdot \vec\nu)+ \vec b_\tau \nabla _\tau (\vec a\cdot \vec\nu)\right)\,dS \\
& \qquad - \int_{\partial\Omega}\mathcal{B}(\vec a_\tau;\vec b_\tau)\,dS-\int_{\partial\Omega}a_{\nu}b_{\nu}\operatorname{trace}\mathcal{B} \,dS + \int_\Omega \sum_{i,j=1}^N D_j a_i D_i b_j\,dx,
\end{split}
\end{equation}
where $\mathcal{B}$ is the second fundamental form of the surface $\partial\Omega.$
\end{proposition}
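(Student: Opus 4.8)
The plan is to derive \eqref{eq:double-ee} by integrating by parts twice and carefully tracking the boundary contributions using the geometry of $\partial\Omega$. First I would write
\[
\int_\Omega \operatorname{div}\vec a\,\operatorname{div}\vec b\,dx
= \int_{\partial\Omega} (\operatorname{div}\vec a)\,(\vec b\cdot\vec\nu)\,dS
- \int_\Omega \nabla(\operatorname{div}\vec a)\cdot\vec b\,dx,
\]
which is legitimate because $\operatorname{div}\vec a\in C^0(\overline\Omega)$ while $\vec b$ is smooth enough; the last integral is then rewritten via a second integration by parts, moving a derivative off $b_j$ and onto the components of $\vec a$. Symmetrizing in $\vec a$ and $\vec b$ (i.e. performing the same computation with the roles exchanged and averaging, or equivalently organizing the two boundary integrals together) yields the volume term $\int_\Omega \sum_{i,j} D_j a_i\,D_i b_j\,dx$ together with a single surface integral of the form $\int_{\partial\Omega} \bigl[(\operatorname{div}\vec a)(\vec b\cdot\vec\nu) - \sum_{i,j} (D_i b_j) a_j \nu_i \bigr]\,dS$, plus its $\vec a\leftrightarrow\vec b$ counterpart.

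The heart of the matter is to rewrite these boundary integrands intrinsically on the hypersurface $\partial\Omega$. Here I would follow \cite[Subsec.~3.1.1]{Grisvard-2011}: at a fixed point $P$ use the orthonormal tangent frame $\{\vec\tau_1,\dots,\vec\tau_{N-1},\vec\nu\}$ and decompose $\vec a=\vec a_\tau + a_\nu\vec\nu$, $\vec b=\vec b_\tau + b_\nu\vec\nu$, expressing the ambient divergence as a surface divergence plus the normal-derivative term:
\[
\operatorname{div}\vec b\big|_{\partial\Omega}
= \operatorname{div}_\tau \vec b_\tau + b_\nu\,\operatorname{trace}\mathcal{B} + \partial_\nu(\vec b\cdot\vec\nu),
\]
where $\mathcal{B}$ is the second fundamental form and $\operatorname{trace}\mathcal{B}$ is the mean curvature (up to sign conventions). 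Substituting this and the analogous identity for $\sum_{i,j}(D_i b_j)a_j\nu_i = \vec a_\tau\cdot\nabla_\tau(\vec b\cdot\vec\nu) + \text{(curvature terms)}$, and integrating the pure surface-divergence pieces by parts on the closed manifold $\partial\Omega$ (no boundary terms appear since $\partial(\partial\Omega)=\emptyset$), the tangential first-order terms reorganize exactly into $-\int_{\partial\Omega}\bigl(\vec a_\tau\nabla_\tau(\vec b\cdot\vec\nu)+\vec b_\tau\nabla_\tau(\vec a\cdot\vec\nu)\bigr)\,dS$, while the curvature contractions assemble into $-\int_{\partial\Omega}\mathcal{B}(\vec a_\tau;\vec b_\tau)\,dS$ and $-\int_{\partial\Omega}a_\nu b_\nu\,\operatorname{trace}\mathcal{B}\,dS$.

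The main obstacle I anticipate is bookkeeping rather than analysis: one must correctly identify which second-derivative-of-$\partial\Omega$ terms (i.e. Christoffel-type symbols of the induced connection, equivalently the coefficients of $\mathcal{B}$) arise when commuting tangential derivatives with the frame $\{\vec\tau_j\}$ and when passing between the ambient gradient $D_i b_j$ restricted to $\partial\Omega$ and the intrinsic covariant derivatives, and then verify that these regroup precisely into the stated bilinear form $\mathcal{B}$ and its trace with the correct signs. A secondary technical point is justifying the two integrations by parts under only the stated regularity $a_i\in C^0(\overline\Omega)\cap C^1(\Omega)$, $b_i\in C^1(\overline\Omega)\cap C^2(\Omega)$; this is handled by an exhaustion argument, applying the divergence theorem on $\{x\in\Omega:\operatorname{dist}(x,\partial\Omega)>\eta\}$ and letting $\eta\to 0^+$, using dominated convergence together with the fact that $\operatorname{div}\vec b$ and $\nabla(\operatorname{div}\vec b)$ need never be evaluated on $\partial\Omega$ — only $\operatorname{div}\vec a$ (which extends continuously) and the first-order data of $\vec b$ appear in the boundary integrals, and for the surface integrations by parts on $\partial\Omega$ we use that $b_i\in C^1(\overline\Omega)$ so $\vec b\cdot\vec\nu\in C^1(\partial\Omega)$. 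I would relegate the detailed frame computation to match \cite{Grisvard-2011} and present the identity in invariant form to keep the argument transparent.
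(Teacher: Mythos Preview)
Your overall strategy---two integrations by parts in $\Omega$ followed by a decomposition of the boundary integrand in the tangent/normal frame and a surface divergence theorem on $\partial\Omega$---is exactly the route the paper takes, and your outline of the geometric identification of the $\mathcal{B}$ and $\operatorname{trace}\mathcal{B}$ terms is on target.

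There is, however, a genuine gap in the first step. Your opening identity
\[
\int_\Omega \operatorname{div}\vec a\,\operatorname{div}\vec b\,dx
= \int_{\partial\Omega} (\operatorname{div}\vec a)\,(\vec b\cdot\vec\nu)\,dS
- \int_\Omega \nabla(\operatorname{div}\vec a)\cdot\vec b\,dx
\]
requires second derivatives of $\vec a$, but the hypothesis is only $a_i\in C^0(\overline\Omega)\cap C^1(\Omega)$; even on compact subsets of $\Omega$, $\nabla(\operatorname{div}\vec a)$ need not exist, so the exhaustion argument you propose does not repair this. The same hypothesis does not yield $\operatorname{div}\vec a\in C^0(\overline\Omega)$ either, so the boundary term is also unjustified. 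The asymmetry in the regularity assumptions is precisely the point: the paper integrates by parts in the \emph{opposite} direction, obtaining
\[
\int_\Omega \operatorname{div}\vec a\,\operatorname{div}\vec b\,dx
= \int_{\partial\Omega} a_\nu\,\operatorname{div}\vec b\,dS - \int_\Omega \vec a\cdot\nabla(\operatorname{div}\vec b)\,dx,
\]
which places the second derivatives on $\vec b$ (available since $b_i\in C^2(\Omega)$) and only $a_\nu$ and $\operatorname{div}\vec b\in C^0(\overline\Omega)$ on the boundary. After the second integration by parts one is left with the single boundary integrand $a_\nu\,\operatorname{div}\vec b - ((\vec a\cdot\nabla)\vec b)\cdot\vec\nu$; this expression alone is then rewritten in the frame and already produces all four boundary terms of \eqref{eq:double-ee}, with $\int_{\partial\Omega}\operatorname{div}_\tau(a_\nu\vec b_\tau)\,dS=0$ eliminating the leftover surface divergence. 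No symmetrization or averaging is needed---nor is it available under the stated regularity, since the $\vec a\leftrightarrow\vec b$ computation would again require $a_i\in C^2$.
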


Equality \eqref{eq:double-ee} is crucial for proving the gradient estimates and second-order regularity results. Similar formulas were derived and used in \cite{Ar-Sh-JDE-2023,Ar-Sh-RACSAM-2023,Ar-Sh-2024,Bogelein-2021,Cianchi-Maz'ya-2019-1}. For the sake of completeness of the presentation, in Appendix \ref{sec:intbyparts} we give the detailed proof of Proposition \ref{int-by-parts-pro}. The proof adapts the arguments in \cite[Sec.3.1.1]{Grisvard-2011} to the situation we study in this work.

Let $u$ be a smooth function defined on $\Omega$, $\partial\Omega\in C^2$, and $u=0$ on $\partial \Omega$. Fix $\epsilon\in (0,1)$ and accept the notation

\[
w_\epsilon=\epsilon^2+|\nabla u|^2.
\]
We will apply Proposition \ref{int-by-parts-pro} to the vectors

\begin{equation}
\label{eq:p+r}
\vec a= w_\epsilon ^{\frac{p(z)-2}{2}}\nabla u,\qquad \vec b=  w_\epsilon ^{\frac{r-2}{2}}\nabla u,\quad r=const\geq 2.
\end{equation}
To apply formula \eqref{eq:double-ee} we need $a_i\in C^1(\overline \Omega)\cap C^2(\Omega)$. The straightforward computation gives

\[
\begin{split}
D_ja_i & = w_\epsilon ^{\frac{p-2}{2}}D^2_{ij}u+(p-2) w_\epsilon ^{\frac{p-2}{2}-1}D_iu\sum_{k=1}^ND_kuD^2_{kj}u
+ \frac{1}{2} w_\epsilon ^{\frac{p-2}{2}}\ln w_\epsilon D_iuD_jp.
\end{split}
\]

For every $\epsilon>0$ the inclusion $a_i\in C^1(\overline{\Omega})$ is fulfilled if $u\in C^2(\overline{\Omega})$, $p\in C^1(\overline\Omega)$, while $a_i\in C^2({\Omega})$ holds if $u\in C^3({\Omega})$, $p\in C^2(\Omega)$. Since $u=0$ on $\partial\Omega$, then
\[
\begin{split}
&
a_\nu=\vec{a}\cdot \vec{\nu}=\vec{a}\cdot \dfrac{\nabla u}{|\nabla u|}= w_\epsilon ^{\frac{p(z)-2}{2}}|\nabla u|,\qquad \vec a_\tau=\vec a-a_\nu \vec\nu=0,
\\
& b_\nu=\vec{b}\cdot \vec{\nu}=\vec{b}\cdot \dfrac{\nabla u}{|\nabla u|}= w_\epsilon ^{\frac{r-2}{2}}|\nabla u|,\qquad \vec b_\tau=\vec b-b_\nu \vec\nu=0,
\end{split}
\]
and the integral over $\partial\Omega$ in \eqref{eq:double-ee} transforms into

\[
-\int_{\partial\Omega} w_\epsilon ^{\frac{p+r}{2}-2}|\nabla u|^2\operatorname{trace}\mathcal{B}\,dS.
\]
Formula \eqref{eq:double-ee} with $\vec a$, $\vec b$ defined in \eqref{eq:p+r} takes on the form

\begin{equation}
\label{eq:double-final-e}
\begin{split}
\int_{\Omega} \operatorname{div}\left( w_\epsilon ^{\frac{p-2}{2}}\nabla u\right)\operatorname{div}\left( w_\epsilon ^{\frac{r-2}{2}}\nabla u\right)\,dx
& =- \int_{\partial\Omega} w_\epsilon ^{\frac{p+r}{2}-2}|\nabla u|^2\operatorname{trace}\mathcal{B}\,dS
\\
& +\sum_{i,j=1}^N\int_{\Omega}D_{i}\left( w_\epsilon ^{\frac{p-2}{2}}D_{j}u\right) D_{j}\left( w_\epsilon ^{\frac{r-2}{2}}D_{i}u\right)\,dx.
\end{split}
\end{equation}

\subsection{Pointwise inequalities}
Assume that $u\in C^2(\Omega)$, $u=0$ on $\partial\Omega$, $p\in C^1(\Omega)$, and $p^->1$. By a straightforward computation
\[
\begin{split}
D_{i} \left( w_\epsilon ^{\frac{p-2}{2}}D_{j}u\right)
 & =  w_\epsilon ^{\frac{p-2}{2}}D^2_{ij}u
+ (p-2) w_\epsilon ^{\frac{p-2}{2}-1}D_ju\sum_{k=1}^ND_k u D^2_{ki}u
+\frac{1}{2} w_\epsilon ^{\frac{p-2}{2}}\ln  w_\epsilon  D_juD_ip
\\
& =
 w_\epsilon ^{\frac{p-2}{2}}\left(D^2_{ij}u + (p-2)\eta_j\sum_{k=1}^N D^2_{ki}u\eta_k+ \frac{1}{2}\ln w_\epsilon D_juD_ip\right),
\end{split}
\]

\[
\begin{split}
D_{j} \left( w_\epsilon ^{\frac{r-2}{2}}D_{i}u\right)
&
=  w_\epsilon ^{\frac{r-2}{2}}D^2_{ij}u
+ (r-2) w_\epsilon ^{\frac{r-2}{2}-1}D_iu\sum_{k=1}^ND_k u D^2_{kj}u
=
 w_\epsilon ^{\frac{r-2}{2}}\left(D^2_{ij}u + (r-2)\eta_i\sum_{k=1}^N D^2_{kj}u\eta_k\right)
\end{split}
\]
with

\begin{equation}
\label{eq:u-eta}
\vec{\eta}=\dfrac{\nabla u}{  \sqrt{w_\epsilon} },\qquad |\vec \eta|<1.
\end{equation}
Then

\[
\begin{split}
D_{i} & \left( w_\epsilon ^{\frac{p-2}{2}}D_{j}u\right) D_{j}\left( w_\epsilon ^{\frac{r-2}{2}}D_{i}u\right)
%\\
%&
=  w_\epsilon ^{\frac{p+r}{2}-2} \left(D^2_{ij}u + (p-2)\eta_j\sum_{k=1}^N D^2_{ki}u\eta_k+ \frac{1}{2}  \sqrt{w_\epsilon} \ln (\epsilon^2+ |\nabla u|^2) \eta_jD_ip\right)
\\
& \quad \times
\left(D^2_{ij}u + (r-2)\eta_i\sum_{k=1}^N D^2_{kj}u\eta_k\right)
\equiv  w_\epsilon ^{\frac{p+r}{2}-2}\mathcal{K}_{ij}.
\end{split}
\]
Denote by $\mathcal{H}(u)$ the Hessian matrix of $u$: the symmetric $N\times N$ matrix with the entries $\mathcal{H}_{ij}(u)=D^2_{ij}u$, $i,j=1,\ldots,N$. In the new notation, the expression for $\mathcal{K}_{ij}$ becomes

\begin{equation*}
\begin{split}
\mathcal{K}_{ij} & = \mathcal{H}^2_{ij}(u) + \mathcal{H}_{ij}(u)\left[\sum_{k=1}^N((p-2)\eta_j\mathcal{H}_{ki}(u)+(r-2)\eta_i \mathcal{H}_{kj}(u))\eta_k\right]
\\
& + (p-2)(r-2)\eta_i\eta_j \sum_{k,l=1}^N
\mathcal{H}_{li}(u)\mathcal{H}_{kj}(u)\eta_l\eta_k
\\
& + \frac{1}{2}\mathcal{H}_{ij}(u)\ln w_\epsilon D_iuD_jp
+ \frac{1}{2}(r-2)\eta_i\eta_j  \sqrt{w_\epsilon} \sum_{k=1}^N\mathcal{H}_{ki}(u)\eta_k \ln  w_\epsilon D_jp
\equiv \sum_{s=1}^{5}\mathcal{J}^{(s)}_{ij}.
\end{split}
\end{equation*}
Summing up we obtain

\begin{equation}
\label{eq:K}
\begin{split}
& \sum_{i,j=1}^N \mathcal{K}_{ij} =\sum_{i,j=1}^N\mathcal{H}^2_{ij}(u)
+ (p+r-4)\sum_{i=1}^N\left(\sum_{j=1}^N\mathcal{H}_{ij}(u)\eta_j\right) \left(\sum_{k=1}^N\mathcal{H}_{ik}(u)\eta_{k}\right)
\\
& \qquad + (p-2)(r-2)\sum_{i,j=1}^N\eta_i\eta_j \sum_{k,l=1}^N
\mathcal{H}_{li}(u)\mathcal{H}_{kj}(u)\eta_l\eta_k +\sum_{i,j=1}^N\sum_{s=4}^5\mathcal{J}_{ij}^{(s)}
\\
& = \sum_{i,j=1}^N\mathcal{H}^2_{ij}(u) + (p+r-4)\sum_{i=1}^N\left(\sum_{j=1}^N\mathcal{H}_{ij}(u)\eta_j\right)\left(\sum_{k=1}^N \mathcal{H}_{ik}(u)\eta_k\right)
\\
& \qquad +(p-2)(r-2) \sum_{i,j=1}^N\left(\sum_{l=1}^N
\mathcal{H}_{li}(u)\eta_l\eta_i\right) \left(\sum_{k=1}^{N}\mathcal{H}_{kj}(u)\eta_j\eta_k\right) +\sum_{i,j=1}^N\sum_{s=4}^5\mathcal{J}_{ij}^{(s)}
\\
& = \sum_{i,j=1}^N\mathcal{H}^2_{ij}(u) + (p+r-4)\sum_{i=1}^N\left(\sum_{j=1}^N\mathcal{H}_{ij}(u)\eta_j\right)\left(\sum_{k=1}^N \mathcal{H}_{ik}(u)\eta_k\right)
\\
& \qquad +(p-2)(r-2) \left(\sum_{i,l=1}^N
\mathcal{H}_{li}(u)\eta_l\eta_i\right) \left(\sum_{j,k=1}^{N}\mathcal{H}_{kj}(u)\eta_j\eta_k\right) +\sum_{i,j=1}^N\sum_{s=4}^5\mathcal{J}_{ij}^{(s)}
\\
& = \operatorname{trace}\mathcal{H}^2(u) + (p+r-4)|(\mathcal{H}(u),\vec \eta)|^2+(p-2)(r-2) \left((\mathcal{H}(u),\vec \eta)\cdot \vec \eta\right)^2 +\sum_{i,j=1}^N\sum_{s=4}^5\mathcal{J}_{ij}^{(s)}.
\end{split}
\end{equation}
The residual terms that depend on the derivatives of $p(z)$ have the form

\begin{equation}
\label{eq:residuals}
\begin{split}
& \mathcal{J}_{ij}^{(4)} =\frac{1}{2}\mathcal{H}_{ij}(u)\ln w_\epsilon D_iuD_jp,
\\
& \mathcal{J}_{ij}^{(5)} = \frac{1}{2}(r-2)\sum_{k=1}^N\mathcal{H}_{kj}(u)\eta_k \eta_j\ln  w_\epsilon D_jp D_iu
\end{split}
\end{equation}
and vanish if $p$ is independent of $x$. Accept the notation
\[
\mathcal{F}(\vec \eta)\equiv \operatorname{trace}\mathcal{H}^2(u) + (p+r-4)|(\mathcal{H}(u),\vec \eta)|^2+(p-2)(r-2) \left((\mathcal{H}(u),\vec \eta)\cdot \vec \eta\right)^2,
\]
with $\vec \eta$ is defined in \eqref{eq:u-eta}.
\begin{proposition}
\label{pro:pointwise}
Let $u\in C^2(\Omega)$. If $p\in C^0(\Omega)$, $p^->1$ and $r\geq 2$, then
\begin{equation}
\label{eq:est-from-below}
\mathcal{F}( \vec \eta)\geq \sigma\operatorname{trace}\mathcal{H}^2(u)\equiv \sigma|u_{xx}|^2\qquad \forall\,\vec \eta \in \mathbb{R}^N,\;\;|\vec \eta|\leq 1,
\end{equation}
with the constant $\sigma=\min\{1,p^--1\}$.
\end{proposition}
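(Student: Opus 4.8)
The plan is to diagonalize the Hessian and reduce \eqref{eq:est-from-below} to an elementary scalar estimate. Since $\mathcal H(u)$ is a real symmetric matrix, fix an orthonormal basis $\{e_1,\dots,e_N\}$ of eigenvectors with eigenvalues $\lambda_k$, and write $\vec\eta=\sum_k c_k e_k$, so that $\sum_k c_k^2=|\vec\eta|^2\le 1$. Then $\operatorname{trace}\mathcal H^2(u)=\sum_k\lambda_k^2=:S$, $|(\mathcal H(u),\vec\eta)|^2=\sum_k\lambda_k^2c_k^2=:A$, and $\bigl((\mathcal H(u),\vec\eta)\cdot\vec\eta\bigr)^2=\bigl(\sum_k\lambda_k c_k^2\bigr)^2=:\mu^2$. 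Two applications of the Cauchy--Schwarz inequality, using $\sum_k c_k^2\le 1$ and $0\le c_k^2\le 1$, yield the ordering $0\le\mu^2\le A\le S$, and in these quantities $\mathcal F(\vec\eta)=S+(p+r-4)A+(p-2)(r-2)\mu^2$.

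Next I would split into two cases. If $p\ge 2$, then $p+r-4\ge 0$ and $(p-2)(r-2)\ge 0$ (because $r\ge 2$), so $\mathcal F(\vec\eta)\ge S\ge\sigma S$ since $\sigma\le 1$; this settles, in particular, all configurations with $p^-\ge 2$. In the remaining case $1<p<2$ it suffices to prove $\mathcal F(\vec\eta)\ge(p-1)S$, because $\sigma=\min\{1,p^--1\}\le p^--1\le p-1$. Here $(p-2)(r-2)\le 0$, so estimating $(p-2)(r-2)\mu^2\ge(p-2)(r-2)A$ we obtain
\[
\mathcal F(\vec\eta)\ge S+\bigl[(p+r-4)+(p-2)(r-2)\bigr]A=S+\bigl[(p-1)(r-1)-1\bigr]A .
\]
If $(p-1)(r-1)\ge 1$ the coefficient of $A$ is nonnegative and $\mathcal F(\vec\eta)\ge S\ge(p-1)S$; if $(p-1)(r-1)<1$ the coefficient is negative, so replacing $A$ by the larger quantity $S$ gives $\mathcal F(\vec\eta)\ge(p-1)(r-1)\,S\ge(p-1)S$, the last inequality because $r-1\ge 1$. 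In either subcase \eqref{eq:est-from-below} follows.

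This argument is essentially bookkeeping rather than a genuine obstacle; the only points to watch are the ordering $\mu^2\le A\le S$ (the first from Cauchy--Schwarz with weights $c_k^2$ whose sum is at most $1$, the second from $c_k^2\le 1$) and the direction of the monotonicity in $A$: replacing $A$ by $S$ is legitimate only after one has checked that the coefficient $(p-1)(r-1)-1$ is negative. The hypothesis $r\ge 2$ is used exactly twice, to give $(p-2)(r-2)$ a definite sign and in the final step $(p-1)(r-1)\ge p-1$, while $p^->1$ enters only through the positivity of $\sigma$ and of $p-1$ in the second case.
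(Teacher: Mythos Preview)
Your proof is correct and follows essentially the same approach as the paper's: diagonalize the Hessian, use Cauchy--Schwarz to obtain the ordering $\mu^2\le A\le S$, and split into the cases $p\ge 2$ and $1<p<2$. Your execution is slightly more streamlined---you avoid the rescaling $\vec\xi=\lambda^{-1}\vec\zeta$ and the splitting $S=\sigma S+(1-\sigma)S$ used in the paper, replacing them with a direct sub-case split on the sign of $(p-1)(r-1)-1$---but the underlying argument and the key algebraic identity $(p+r-4)+(p-2)(r-2)=(p-1)(r-1)-1$ are the same.
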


\begin{proof}
Fix a point $x_0\in \Omega$, denote $p=p(x_0)$, and take an arbitrary vector $\vec \eta$, $|\vec \eta|<1$. If $\mathcal{H}(u(x_0))=0$, inequality \eqref{eq:est-from-below} is obvious. Assume that $\mathcal{H}(u(x_0))\not=0$, diagonalize the matrix $\mathcal{H}(u(x_0))$ by means of rotation,  and denote by $\vec \zeta$ the vector $\vec \eta$ in the new basis:

\[
(\mathcal{H}(u(x_0))\cdot \vec \eta )\cdot (\mathcal{H}(u(x_0))\cdot \vec \eta )=\sum_{i=1}^Nd_i^2\zeta_i^2,\qquad \mathcal{H}(u(x_0))\cdot\vec \eta)\cdot \vec \eta=\sum_{i=1}^N d_i\zeta_i^2,\qquad \operatorname{trace}\mathcal{H}^2(u)=\sum_{i=1}^N d_i^2,
\]
where $d_i$ are the diagonal elements of the transformed matrix. Set $|\vec \zeta|=\lambda<1$ and denote
\[
\vec \xi=\lambda^{-1}\vec \zeta, \quad |\vec \xi|=1.
\]
Then

\[
\begin{split}
\mathcal{F}(\vec \eta) &  = \sum_{i=1}^Nd_i^2+(p+r-4)\sum_{i=1}^N d^2_i\zeta_i^2 + (p-2)(r-2) \left(\sum_{i=1}^N d_i\zeta_i^2\right)^2,
\\
& = \sum_{i=1}^Nd_i^2+\lambda^2(p+r-4)\sum_{i=1}^N d_i^2\xi_i^2 + \lambda^4(p-2)(r-2) \left(\sum_{i=1}^N d_i\xi_i^2\right)^2
\\
& \equiv
\Phi(\vec \xi,\lambda),\qquad |\vec \xi|=1,\quad \lambda \in (0,1).
\end{split}
\]
If $p\geq 2$, $r\geq 2$, the second and third terms of $\Phi(\vec \xi,\lambda)$ are nonnegative, whence

\[
\mathcal{F}(\vec \eta)\geq \sum_{i=1}^N d_i^2=\operatorname{trace} \mathcal{H}^2(u(x_0))
\]
as required. Let

\[
1<p^-\leq p<2,\qquad r\geq 2.
\]
By the Cauchy-Bunyakovsky-Schwarz inequality

\begin{equation}
\label{eq:F-new-1}
\left(\sum_{i=1}^Nd_i\xi_i^2\right)^2 =\left(\sum_{i=1}^N\left(d_i\xi_i\right)\xi_i\right)^2\leq \left(\sum_{i=1}^Nd_i^2\xi_i^2\right)\left(\sum_{i=1}^N\xi_i^2\right)=\sum_{i=1}^Nd_i^2\xi_i^2.
\end{equation}
Since $\lambda <1$ and $|\vec \xi|=1$, the first and third terms of $\Phi$ are bounded from below:

\[
\begin{split}
& \lambda^4(p-2)(r-2) \left(\sum_{i=1}^N d_i\xi_i^2\right)^2\geq \lambda^2 (p^--2)(r-2) \sum_{i=1}^Nd_i^2\xi_i^2,
\\
& \sum_{i=1}^Nd_i^2=  \sigma\sum_{i=1}^Nd_i^2+(1-\sigma)\sum_{i=1}^Nd_i^2\geq \sigma\sum_{i=1}^Nd_i^2+(1-\sigma)\lambda^2\sum_{i=1}^Nd_i^2\xi_i^2
\end{split}
\]
with any $\sigma\in [0,1]$. Then

\[
\Phi(\vec \xi,\lambda)\geq \lambda^2\left((1-\sigma) +(p^-+r-4)+(r-2)(p^--2)\right)\sum_{i=1}^Nd_i^2\xi_i^2 +\sigma \sum_{i=1}^Nd_i^2.
\]
For $\sigma= \min\{1,p^--1\}$

\[
(1-\sigma) +(p^-+r-4)+(r-2)(p^--2)=-\sigma+(r-1)(p^--1)\geq -\sigma+(p^--1)\geq 0,
\]
whence $\displaystyle \Phi(\vec \xi,\lambda)\geq \sigma \sum_{i=1}^Nd_i^2$.
\end{proof}

\begin{lemma}
\label{le:pointwise-1}
If $u\in C^2(\Omega)$, $p\in C^1(\Omega)$, $p^->1$, and $r\geq 2$, then

\begin{equation}
\label{eq:est-from-below-1}
\begin{split}
\sum_{i,j=1}^N & D_{i} \left( w_\epsilon ^{\frac{p-2}{2}}D_{j}u\right) D_{j}\left( w_\epsilon ^{\frac{r-2}{2}}D_{i}u\right)
%\\
%&
\geq \sigma w_\epsilon ^{\frac{p+r}{2}-2}|u_{xx}|^2
+ w_\epsilon ^{\frac{p+r}{2}-2}\sum_{i,j=1}^N\sum_{s=4}^5\mathcal{J}_{ij}^{(s)}
\end{split}
\end{equation}
with $\sigma=\min\{1,p^--1\}$ and $\mathcal{J}_{ij}^{(s)}$ defined in \eqref{eq:residuals}.
\end{lemma}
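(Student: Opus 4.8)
The plan is to assemble the pointwise identity already derived above and then invoke Proposition~\ref{pro:pointwise}. First I would record that, by the chain-rule computation preceding \eqref{eq:K}, each summand factors as
\[
D_{i}\bigl(w_\epsilon^{\frac{p-2}{2}}D_{j}u\bigr)D_{j}\bigl(w_\epsilon^{\frac{r-2}{2}}D_{i}u\bigr)=w_\epsilon^{\frac{p+r}{2}-2}\,\mathcal{K}_{ij},
\]
so that summing over $i,j$ and substituting the explicit evaluation \eqref{eq:K} of $\sum_{i,j}\mathcal{K}_{ij}$ gives
\[
\sum_{i,j=1}^{N}D_{i}\bigl(w_\epsilon^{\frac{p-2}{2}}D_{j}u\bigr)D_{j}\bigl(w_\epsilon^{\frac{r-2}{2}}D_{i}u\bigr)=w_\epsilon^{\frac{p+r}{2}-2}\Bigl(\mathcal{F}(\vec\eta)+\sum_{i,j=1}^{N}\sum_{s=4}^{5}\mathcal{J}^{(s)}_{ij}\Bigr),
\]
with $\vec\eta=\nabla u/\sqrt{w_\epsilon}$ as in \eqref{eq:u-eta}. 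This identity holds pointwise in $\Omega$ under the standing hypotheses $u\in C^2(\Omega)$ and $p\in C^1(\Omega)$, which are exactly what is needed for the residual terms $\mathcal{J}^{(4)}_{ij},\mathcal{J}^{(5)}_{ij}$ involving $\ln w_\epsilon\,D_j p$ to be well defined; note that $w_\epsilon\geq\epsilon^2>0$, so no degeneracy arises in differentiating the powers of $w_\epsilon$.

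Next I would observe that $w_\epsilon=\epsilon^2+|\nabla u|^2>|\nabla u|^2$ for every $\epsilon\in(0,1)$, whence $|\vec\eta|<1$ at every point of $\Omega$. Since $p^->1$ and $r\geq 2$ by assumption, Proposition~\ref{pro:pointwise} is applicable to the vector $\vec\eta$ at each point and yields
\[
\mathcal{F}(\vec\eta)\geq\sigma\operatorname{trace}\mathcal{H}^2(u)=\sigma|u_{xx}|^2,\qquad \sigma=\min\{1,p^--1\}.
\]
Multiplying this inequality by the strictly positive factor $w_\epsilon^{\frac{p+r}{2}-2}$ and inserting it into the identity of the previous paragraph produces precisely \eqref{eq:est-from-below-1}.

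I do not expect a genuine obstacle here: the entire content of the lemma is carried by the algebraic identity \eqref{eq:K} together with the lower bound of Proposition~\ref{pro:pointwise}, so the statement is essentially a corollary of those two facts. The only point requiring a small amount of care is ensuring that Proposition~\ref{pro:pointwise} is invoked with the correct vector, namely $\vec\eta$ from \eqref{eq:u-eta}, whose admissibility is guaranteed exactly by the strict inequality $|\vec\eta|<1$ coming from the presence of the regularization parameter $\epsilon$; the sign of $w_\epsilon^{\frac{p+r}{2}-2}$ is irrelevant to this since the factor is positive regardless of whether $p(x)+r$ exceeds or falls below $4$.
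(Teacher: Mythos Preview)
Your proposal is correct and follows exactly the same approach as the paper: the paper's proof consists of the single sentence ``It is sufficient to combine formulas \eqref{eq:K} with inequality \eqref{eq:est-from-below},'' which is precisely what you have spelled out. Your additional remarks about the positivity of $w_\epsilon^{\frac{p+r}{2}-2}$ and the admissibility $|\vec\eta|<1$ are accurate and simply make explicit what the paper leaves implicit.
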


\begin{proof}
It is sufficient to combine formulas \eqref{eq:K} with inequality \eqref{eq:est-from-below}.
\end{proof}
\subsection{Integral inequalities}
Plugging \eqref{eq:est-from-below-1} into \eqref{eq:double-final-e} we arrive at the following assertion.

\begin{lemma}
\label{le:principal-e} Let $\partial\Omega\in C^2$, $u\in C^3(\Omega)\cap
C^{2}(\overline{\Omega})$ and $u=0$ on $\partial\Omega$. If $p\in C^1(\overline{\Omega})$, $p^-> 1$,  and $r\geq 2$, then

\begin{equation}
\label{eq:p-est-1}
\begin{split}
\int_{\Omega} \operatorname{div}\left(w_\epsilon^{\frac{p-2}{2}}\nabla u\right) &
\operatorname{div}\left(w_\epsilon^{\frac{r-2}{2}}\nabla
u\right)\,dx
 \geq - \int_{\partial\Omega}w_\epsilon^{\frac{p+r}{2}-2}|\nabla u|^2\operatorname{trace}\mathcal{B}\,dS
 \\
 &
  +
\sigma \int_{\Omega}w_\epsilon^{\frac{p+r}{2}-2}|u_{xx}|^2\,dx
 + \sum_{i,j=1}^N\sum_{s=4}^5 \int_{\Omega}\mathcal{J}_{ij}^{(s)}\,dx,
\end{split}
\end{equation}
where $\mathcal{B}$ is the second fundamental form of the surface $\partial\Omega$, $\sigma=\min\{1,p^--1\}$, and $\mathcal{J}_{ij}^{(s)}$ are defined in \eqref{eq:residuals}.
\end{lemma}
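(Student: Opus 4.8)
The statement is designed to follow by combining the integration-by-parts identity \eqref{eq:double-final-e} with the pointwise lower bound \eqref{eq:est-from-below-1} of Lemma \ref{le:pointwise-1}, so the plan is to integrate the latter over $\Omega$ and substitute it into the former. Before doing this I would record the elementary facts that make every step legitimate: with $\epsilon>0$ held fixed, $w_\epsilon=\epsilon^2+|\nabla u|^2$ is continuous on $\overline\Omega$ and satisfies $w_\epsilon\geq\epsilon^2>0$, so $\ln w_\epsilon$ and all powers $w_\epsilon^{s}$, in particular $w_\epsilon^{\frac{p+r}{2}-2}$, are continuous and bounded on $\overline\Omega$. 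Together with $u\in C^2(\overline\Omega)$ and $p\in C^1(\overline\Omega)$ this makes the integrands on both sides of \eqref{eq:est-from-below-1}, the product $\operatorname{div}(w_\epsilon^{\frac{p-2}{2}}\nabla u)\operatorname{div}(w_\epsilon^{\frac{r-2}{2}}\nabla u)$, and the boundary density $w_\epsilon^{\frac{p+r}{2}-2}|\nabla u|^2\operatorname{trace}\mathcal B$ all continuous and bounded, hence every integral below is finite and the pointwise inequality may be integrated term by term.

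The first step is to justify \eqref{eq:double-final-e} under the present hypotheses. Taking $\vec a=w_\epsilon^{\frac{p-2}{2}}\nabla u$ and $\vec b=w_\epsilon^{\frac{r-2}{2}}\nabla u$ as in \eqref{eq:p+r}, the explicit expression for $D_ja_i$ displayed after \eqref{eq:p+r} shows that $u\in C^2(\overline\Omega)$ and $p\in C^1(\overline\Omega)$ give $a_i\in C^0(\overline\Omega)\cap C^1(\Omega)$ (indeed $a_i\in C^1(\overline\Omega)$, with $D_ja_i$ bounded), while the constancy of $r$ together with $u\in C^3(\Omega)\cap C^2(\overline\Omega)$ give $b_i\in C^1(\overline\Omega)\cap C^2(\Omega)$; no extra regularity of $p$ is required, since the variable exponent enters only through the vector $\vec a$, which plays the less regular role in Proposition \ref{int-by-parts-pro}. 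Thus Proposition \ref{int-by-parts-pro} applies. Because $u=0$ on $\partial\Omega$, the gradient is purely normal there, so $\vec a_\tau=\vec b_\tau=0$ and $a_\nu b_\nu=w_\epsilon^{\frac{p+r}{2}-2}|\nabla u|^2$; this annihilates the first two boundary integrals in \eqref{eq:double-ee}, leaving only the $\operatorname{trace}\mathcal B$ term, which is exactly \eqref{eq:double-final-e}.

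The second step is to integrate \eqref{eq:est-from-below-1} over $\Omega$ — permissible by the finiteness remarks above — and to substitute the resulting lower bound for $\int_\Omega\sum_{i,j}D_i(w_\epsilon^{\frac{p-2}{2}}D_ju)D_j(w_\epsilon^{\frac{r-2}{2}}D_iu)\,dx$ into the right-hand side of \eqref{eq:double-final-e}. This immediately yields \eqref{eq:p-est-1}, with $\sigma=\min\{1,p^--1\}$, the boundary integral carried over verbatim from \eqref{eq:double-final-e}, and the residual integrands $w_\epsilon^{\frac{p+r}{2}-2}\mathcal J^{(s)}_{ij}$, $s=4,5$, supplied by \eqref{eq:residuals} — terms generated by the $\ln w_\epsilon\,\nabla p$ contributions that disappear when $p$ does not depend on $x$. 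I do not anticipate a genuine obstacle here: all the analytic substance has already been invested in the two ingredients, namely the integration-by-parts identity of Proposition \ref{int-by-parts-pro} (proved in the appendix) and the quadratic-form inequality \eqref{eq:est-from-below} of Proposition \ref{pro:pointwise}; the only points needing care are the regularity bookkeeping for $\vec a$ and $\vec b$ sketched above and the use of $\epsilon>0$ to keep $\ln w_\epsilon$ and the (possibly negative) powers of $w_\epsilon$ bounded, so that the passage from the pointwise estimate to the integral one is unproblematic.
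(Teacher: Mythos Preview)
Your proposal is correct and follows exactly the paper's approach: the paper's proof consists of the single sentence ``Plugging \eqref{eq:est-from-below-1} into \eqref{eq:double-final-e} we arrive at the following assertion,'' and you do precisely this, additionally spelling out the regularity verification for $\vec a$ and $\vec b$ needed to invoke Proposition~\ref{int-by-parts-pro} and the boundedness of the integrands for $\epsilon>0$. Your remark that the residual integrands carry the factor $w_\epsilon^{\frac{p+r}{2}-2}$ is in fact what the argument produces (and what is used later in \eqref{eq:est-J-4-5}); the statement of the lemma as printed omits this factor, which appears to be a typographical slip rather than a discrepancy in your reasoning.
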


Observe that

\begin{equation}
\label{eq:F-1}
\begin{split}
w_\epsilon|u_{xx}|^2 & \geq |\nabla u|^2|u_{xx}|^2 =\left(\sum_{k=1}^N\left(D_ku\right)^2\right)\left(\sum_{i,j=1}^N\left(D^2_{ij}u\right)^2\right)
\geq \sum_{i=1}^N\left(\sum_{j=1}^N D^2_{ij}uD_ju\right)^2
\\
& =\frac{1}{4}\sum_{i=1}^N\left(D_i\left(\epsilon^2+\sum_{j=1}^N \left(D_j u\right)^2\right)\right)^2 \equiv \frac{1}{4}|\nabla w_\epsilon|^2
\end{split}
\end{equation}
and

\[
\begin{split}
\left|\nabla \left(w_\epsilon^{\frac{p+r-2}{4}}\right)\right|^2 & = \frac{(p+r-2)^2}{16}w_\epsilon^{\frac{p+r}{2}-3}|\nabla w_\epsilon|^2
\\
&
 \quad
+ \frac{p+r-2}{8}w_\epsilon^{\frac{p+r}{2}-2}\ln w_\epsilon (\nabla w_\epsilon,\nabla p)
 +\frac{1}{16}w_\epsilon^{\frac{p+r}{2}-1}\ln^2w_\epsilon|\nabla p|^2.
\end{split}
\]
Under the assumptions of Lemma \ref{le:principal-e} the second term on the right-hand side of \eqref{eq:p-est-1} is bounded from below:

\begin{equation}
\label{eq:F-2}
\begin{split}
w_\epsilon^{\frac{p+r}{2}-2}|u_{xx}|^2
& \geq
 \dfrac{1}{4} w_\epsilon^{\frac{p+r}{2}-3}|\nabla w_\epsilon|^2
 \geq \frac{4}{(p+r-2)^2} \left|\nabla \left(w_\epsilon^{\frac{p+r-2}{4}}\right)\right|^2
 \\
 & \quad
- \frac{2L_p}{p+r-2} w_\epsilon^{\frac{p+r}{2}-2}|\ln w_\epsilon||\nabla w_\epsilon|
- \frac{4L_p^2}{(p+r-2)^2}  w_\epsilon^{\frac{p+r}{2}-1}\ln^2 w_\epsilon
\\
& \equiv \frac{4}{(p+r-2)^2} \left|\nabla \left(w_\epsilon^{\frac{p+r-2}{4}}\right)\right|^2 + w_\epsilon^{\frac{p+r}{2}-2}\left(\mathcal{M}_1 +\mathcal{M}_2\right)
\end{split}
\end{equation}
with

\begin{equation}
\label{eq:residuals-1}
\mathcal{M}_1=\frac{2L_p}{p+r-2} |\ln w_\epsilon||\nabla w_\epsilon|,\qquad \mathcal{M}_2=\frac{4L_p^2}{(p+r-2)^2}  w_\epsilon\ln^2 w_\epsilon.
\end{equation}
Inequality \eqref{eq:p-est-1} becomes

\begin{equation}
\label{eq:p-est-1-trans}
\begin{split}
\int_{\Omega} \operatorname{div}\left(w_\epsilon^{\frac{p-2}{2}}\nabla u\right) &
\operatorname{div}\left(w_\epsilon^{\frac{r-2}{2}}\nabla
u\right)\,dx
 \geq - \int_{\partial\Omega}w_\epsilon^{\frac{p+r}{2}-2}|\nabla u|^2\operatorname{trace}\mathcal{B}\,dS
 \\
 &
  +
\frac{4\sigma}{(p^++r-2)^2} \int_{\Omega}\left|\nabla \left(w_\epsilon^{\frac{p+r-2}{4}}\right)\right|^2\,dx
\\
&
+ \sigma \int_\Omega w_\epsilon^{\frac{p+r}{2}-2}\left(\mathcal{M}_1 +\mathcal{M}_2\right)\,dx
 + \sum_{i,j=1}^N\sum_{s=4}^5 \int_{\Omega}\mathcal{J}_{ij}^{(s)}\,dx.
\end{split}
\end{equation}

\subsection{Interpolation inequalities}
To control the right-hand side of \eqref{eq:p-est-1-trans} we will use the following inequality.

 \begin{lemma}[Lemma 4.5, \cite{Ar-Sh-RACSAM-2023}]
\label{le:racsam}
Let $\partial\Omega\in C^2$, $q\in C^{0,1}(\Omega)$ with the Lipschitz constant $L_q$,  and $q^->\frac{2N}{N+2}$. For every $u\in C^2(\Omega)\cap C^1(\overline{\Omega})$, $\delta\in (0,1)$ and $\rho \in \left(0,\frac{4}{N+2}\right)$

\[
\int_{\Omega} w_\epsilon ^{\frac{q(x)+\rho-2}{2}}|\nabla u|^2\,dx\leq \delta \int_{\Omega}(\epsilon^2+|\nabla u|^{2})^{\frac{q(x)-2}{2}}|u_{xx}|^2\,dx +C
\]
with a constant $C$ depending only on $\rho$, $N$, $q^\pm$, $L_q$, $\delta$, $\|u\|_{2,\Omega}$.
\end{lemma}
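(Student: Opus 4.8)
The final statement to prove is Lemma~\ref{le:racsam}, an interpolation inequality that is attributed to \cite{Ar-Sh-RACSAM-2023}. Since the paper cites it as a known result, the "proof" is really a reconstruction of the standard argument. Let me sketch how I would prove it.

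The plan is to integrate by parts in the integral on the left-hand side, moving one derivative off $|\nabla u|^2$, so as to create a factor involving $u_{xx}$ which can then be absorbed into the right-hand side by Young's inequality, at the cost of lower-order terms in $\nabla u$ which are then handled by a Sobolev/Gagliardo--Nirenberg interpolation and the $L^2(\Omega)$ bound on $u$.

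\textbf{Step 1: Reduction and a convenient substitution.} Write $w_\epsilon=\epsilon^2+|\nabla u|^2$ and $I=\int_\Omega w_\epsilon^{\frac{q(x)+\rho-2}{2}}|\nabla u|^2\,dx$. Since $|\nabla u|^2\le w_\epsilon$ we have $I\le\int_\Omega w_\epsilon^{\frac{q(x)+\rho}{2}}\,dx$, so it suffices to estimate $J=\int_\Omega w_\epsilon^{\frac{q(x)+\rho}{2}}\,dx$. The idea is to introduce the auxiliary function $v=w_\epsilon^{\frac{q(x)+\rho-2}{4}}\nabla u$ (componentwise, $v_k=w_\epsilon^{\frac{q(x)+\rho-2}{4}}D_k u$); then $\sum_k v_k D_k u=w_\epsilon^{\frac{q(x)+\rho-2}{4}}|\nabla u|^2$ and $I=\int_\Omega w_\epsilon^{\frac{q(x)+\rho-2}{4}}\sum_k v_k D_k u\,dx$. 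Actually the cleanest route is to integrate by parts directly in $J$: write $w_\epsilon^{\frac{q+\rho}{2}}=w_\epsilon^{\frac{q+\rho-2}{2}}\sum_k (D_k u)(D_k u)+\epsilon^2 w_\epsilon^{\frac{q+\rho-2}{2}}$ and, using $u=0$ on $\partial\Omega$ is \emph{not} available for the gradient directly, so instead integrate by parts via $D_k u\cdot D_k u$ treating one $D_k u$ as a function to be differentiated and $w_\epsilon^{\frac{q+\rho-2}{2}}D_k u$ as the field.

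\textbf{Step 2: Integration by parts.} Since $q+\rho-2$ may be of either sign, one should work on the nondegenerate level with $w_\epsilon\ge\epsilon^2>0$ so all powers are smooth. We have
\[
\int_\Omega w_\epsilon^{\frac{q+\rho-2}{2}}\,|\nabla u|^2\,dx=\sum_{k=1}^N\int_\Omega w_\epsilon^{\frac{q+\rho-2}{2}}\,(D_k u)\,(D_k u)\,dx=-\sum_{k=1}^N\int_\Omega D_k\!\left(w_\epsilon^{\frac{q+\rho-2}{2}}D_k u\right) u\,dx,
\]
the boundary term vanishing because $u=0$ on $\partial\Omega\in C^2$. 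Expanding the derivative produces three types of terms: (i) $w_\epsilon^{\frac{q+\rho-2}{2}}\,\Delta u\cdot u$, (ii) $\frac{q+\rho-2}{2}\,w_\epsilon^{\frac{q+\rho-4}{2}}\,(\nabla w_\epsilon\cdot\nabla u)\,u$, and (iii) the log-term $\frac{1}{2}w_\epsilon^{\frac{q+\rho-2}{2}}\ln w_\epsilon\,(\nabla q\cdot\nabla u)\,u$ coming from differentiating the variable exponent. For (i) and (ii) one bounds $|\Delta u|\le N|u_{xx}|$ and $|\nabla w_\epsilon|\le 2|\nabla u|\,|u_{xx}|\le 2w_\epsilon^{1/2}|u_{xx}|$ (as in \eqref{eq:F-1}); for (iii) the Lipschitz bound gives $|\nabla q|\le L_q$ and one absorbs the logarithm by $|\ln w_\epsilon|\le C_\theta w_\epsilon^{\theta}$ for arbitrarily small $\theta>0$ (valid for $w_\epsilon\ge\epsilon^2$ with a constant depending on $\epsilon$? — no, we need $\epsilon$-uniformity, so use instead $t^{a}|\ln t|\le C(a,b)(t^{a-b}+t^{a+b})$ with the worst case absorbed into neighbouring powers).

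\textbf{Step 3: Young's inequality and closing the estimate.} Each of the resulting integrands has the schematic form $w_\epsilon^{\alpha}|u_{xx}|\,|u|$ with $\alpha$ close to $\frac{q+\rho-2}{2}$ (the log-term shifting $\alpha$ by $\pm\theta$). Split off $w_\epsilon^{\frac{q-2}{2}}$, which is the weight appearing on the right-hand side, and apply Young's inequality:
\[
w_\epsilon^{\alpha}|u_{xx}|\,|u|=\Big(w_\epsilon^{\frac{q-2}{4}}|u_{xx}|\Big)\Big(w_\epsilon^{\alpha-\frac{q-2}{4}}|u|\Big)\le \frac{\delta}{c}\,w_\epsilon^{\frac{q-2}{2}}|u_{xx}|^2+C_\delta\,w_\epsilon^{2\alpha-\frac{q-2}{2}}|u|^2.
\]
The first term, after summing over the finitely many contributions and choosing the split constant $c$ appropriately, becomes $\delta\int_\Omega w_\epsilon^{\frac{q-2}{2}}|u_{xx}|^2\,dx$, exactly the allowed right-hand side. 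It remains to estimate $\int_\Omega w_\epsilon^{2\alpha-\frac{q-2}{2}}|u|^2\,dx$. With $\alpha=\frac{q+\rho-2}{2}$ the exponent is $2\alpha-\frac{q-2}{2}=\frac{q}{2}+\rho-1+\ldots$; one checks this is strictly less than $\frac{q+\rho}{2}$, so this term is a strictly subcritical power of $w_\epsilon$ times $|u|^2$. Bounding $w_\epsilon$ crudely is not enough; instead use $\int_\Omega w_\epsilon^{\beta}|u|^2\,dx\le \|u\|_{L^\infty}^2\int_\Omega w_\epsilon^{\beta}\,dx$ only if $u$ were bounded, which it is not a priori. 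The correct device is: this term is a \emph{lower-order} power of the quantity $J$ we are estimating, so by Young's inequality again, $\int_\Omega w_\epsilon^{\beta}|u|^2\,dx\le \eta J+C_\eta\|u\|_{2,\Omega}^{?}$ — here the condition $q^->\frac{2N}{N+2}$ enters, ensuring the exponent gap is favourable and that the Sobolev embedding $W^{1,q}\hookrightarrow L^{2}$-type interpolation lets one trade the power of $w_\epsilon$ against a small multiple of $J$ plus a constant controlled by $\|u\|_{2,\Omega}$. Choosing $\eta$ small and recalling $I\le J$ while $J\le I+\epsilon^2|\Omega|^{\ldots}$, one absorbs the $\eta J$ term on the left and arrives at the claimed inequality.

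\textbf{Main obstacle.} The delicate point is the very last absorption: showing that $\int_\Omega w_\epsilon^{\beta}|u|^2\,dx$ with $\beta<\frac{q+\rho}{2}$ can be bounded by $\eta\int_\Omega w_\epsilon^{\frac{q+\rho}{2}}\,dx+C(\eta,\|u\|_{2,\Omega})$ \emph{uniformly in $\epsilon$}, with the constant depending only on the listed quantities. This requires a genuine interpolation/Sobolev argument (a Gagliardo--Nirenberg inequality on $\Omega$ for the variable-exponent setting, using $\partial\Omega\in C^2$ and $q^->\frac{2N}{N+2}$ to make the interpolation exponents admissible), not merely Hölder's inequality; it is where the hypothesis $q^->\frac{2N}{N+2}$ is used and where the precise dependence of $C$ is tracked. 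The variable-exponent and logarithmic-remainder bookkeeping (Step 2 (iii)) is technically fussy but routine given the Lipschitz bound on $q$. Since this lemma is quoted from \cite[Lemma 4.5]{Ar-Sh-RACSAM-2023}, in the present paper one would simply invoke it, but the reconstruction above is the argument behind it.
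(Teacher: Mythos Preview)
The paper does not prove this lemma at all: it is quoted verbatim as Lemma~4.5 of \cite{Ar-Sh-RACSAM-2023} and used as a black box, so there is no in-paper argument to compare your reconstruction against. Your outline---integrate by parts to produce a Hessian factor, split via Young's inequality into $\delta\int w_\epsilon^{\frac{q-2}{2}}|u_{xx}|^2$ plus a lower-order remainder, then absorb the remainder by interpolation using the $L^2$ bound on $u$---is indeed the standard route to such estimates, and you correctly flag the final absorption as the crux where $q^->\frac{2N}{N+2}$ and $\rho<\frac{4}{N+2}$ enter.

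Two caveats. First, your integration by parts in Step~2 uses $u=0$ on $\partial\Omega$, which the lemma as transcribed here does \emph{not} assume (only $u\in C^2(\Omega)\cap C^1(\overline\Omega)$); in the source paper the Dirichlet condition is presumably part of the hypothesis, and it certainly holds in every application made in the present paper, but you should note the discrepancy rather than silently import it. Second, your Step~3 is too schematic to count as a proof: after Young you are left with $\int_\Omega w_\epsilon^{\frac{q}{2}+\rho-1}|u|^2\,dx$, and the claim that this is $\le \eta J+C_\eta(\|u\|_{2,\Omega})$ needs an explicit H\"older/Gagliardo--Nirenberg computation (interpolating $\|u\|_{\frac{2(q+\rho)}{2-\rho}}$ between $\|\nabla u\|_{q+\rho}$ and $\|u\|_2$) together with a check that the resulting power of $J$ is strictly less than $1$; this is where the numerical constraints on $q^-$ and $\rho$ are actually used, and your sketch stops short of verifying it. Since the present paper simply invokes the result, your level of detail is adequate for that purpose, but as a standalone proof the closing step remains a gap.
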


To apply Lemma \ref{le:racsam} in \eqref{eq:p-est-1-trans} with $r\geq 2$ we set

\begin{equation}
\label{eq:lower-bound-p}
\notag
\frac{q(z)-2}{2}=\frac{p(z)+r-4}{2}\qquad \text{and claim}\quad q(z)>\frac{2N}{N+2}\quad \Leftrightarrow \quad p(z)+r>2+\frac{2N}{N+2}.
\end{equation}
The last inequality is surely true because $p^->\frac{2(N+1)}{N+2}$ by assumption and $r\geq 2$. Then for every $\delta>0$

\begin{equation}
\label{eq:interpol-1}
\int_{\Omega} w_\epsilon^{\frac{p+r+\rho-4}{2}}|\nabla u|^2\,dx\leq \delta \int_{\Omega} w_\epsilon^{\frac{p+r}{2}-2}|u_{xx}|^2\,dx +C,\quad \rho \in \left(0,\frac{4}{N+2}\right).
\end{equation}
If $\rho \in \left(\frac{2}{N+2},\frac{4}{N+2}\right)$, inequality \eqref{eq:interpol-1} remains true with the integrand of the left-hand side substituted by $|\nabla u|^{p+r+\rho-2}$ and, by Young' inequality, extends to the range $\rho \in \left(\left.0,\frac{2}{N+2}\right]\right.$:

\begin{equation}
\label{eq:interpol-2}
\int_{\Omega} |\nabla u|^{p+r+\rho-2}\,dx \leq \delta \int_{\Omega} w_\epsilon^{\frac{p+r}{2}-2}|u_{xx}|^2\,dx +C,\quad \rho \in \left(0,\frac{4}{N+2}\right),
\end{equation}
with $C=C(\rho,N,p^\pm,r,L,\delta,\|u\|_{2,\Omega})$.

We will repeatedly use the following elementary inequality: for every $\gamma>0$ and $\alpha\in (0,\gamma)$ there is a constant $C=C(\alpha,\gamma)$ such that

\begin{equation}
\label{eq:elem}
s^\gamma |\ln s|\leq C(1+s^{\gamma+\alpha})\quad \text{for $s>0$}.
\end{equation}
It follows from \eqref{eq:elem} and \eqref{eq:interpol-1} that for every $\lambda \in \left(0,\frac{2}{N+2}\right)$ and $\delta\in (0,1)$

\begin{equation}
\label{eq:interpol-2-prim}
\begin{split}
\int_{\Omega} w_\epsilon^{\frac{p+r-4}{2}}|\nabla u|^2 |\ln w_\epsilon|\,dx & \leq C\left(1+\int_{\Omega} w_\epsilon^{\frac{p+r-4}{2}+\lambda}|\nabla u|^2\,dx\right)
%\\
%&
\leq \delta \int_{\Omega} w_\epsilon^{\frac{p+r}{2}-2}|u_{xx}|^2\,dx +C
\end{split}
\end{equation}
with a constant $C$ depending on the same quantities as the constants in \eqref{eq:interpol-1}, \eqref{eq:interpol-2}, and on $\lambda$.

Inequality \eqref{eq:interpol-1} remains valid for $s=0$ because for every $\gamma\in \left(0,\frac{4}{N+2}\right)$

\begin{equation}
  \label{eq:w-u}
\begin{split}
   w_\epsilon^{\frac{p+r}{2}-2}|\nabla u|^2 & \leq 1+ w_\epsilon^{\frac{p+r+\gamma}{2}-1}
    \leq 1+\begin{cases}
     (2\epsilon^2)^{\frac{p+r+\gamma}{2}-1} & \text{if $|\nabla u|<\epsilon$},
     \\
     w_\epsilon^{\frac{p+r+\gamma}{2}-2}(2|\nabla u|^2) & \text{if $|\nabla u|\geq \epsilon$}
     \end{cases}
     \\
     & \leq C\left(1+ w_\epsilon^{\frac{p+r+\gamma}{2}-2}|\nabla u|^2\right).
\end{split}
\end{equation}

\subsection{Estimating $\mathcal{J}_{ij}^{(4,5)}$ and $\mathcal{M}_{1,2}$} In \eqref{eq:p-est-1-trans}, the residual terms $\mathcal{J}_{ij}^{(4,5)}$  are defined by formulas \eqref{eq:residuals}, the terms $\mathcal{M}_{1,2}$ are defined in \eqref{eq:residuals-1}. Since $|\eta|<1$, $\mathcal{J}_{i,j}^{(4,5)}$ satisfy the inequalities

\[
\begin{split}
& |\mathcal{J}_{ij}^{(4)}| \leq L_p|\ln w_\epsilon| |\nabla u||u_{xx}|,
\\
& |\mathcal{J}_{ij}^{(5)}| \leq  L_pN(r-2)|\ln w_\epsilon||\nabla u||u_{xx}|,\qquad i,j=\overline{1,N},
\end{split}
\]
where $L_p$ is the Lipschitz constant of $p(z)$. The terms $\mathcal{M}_{1,2}$ are bounded by

\[
\begin{split}
|\mathcal{M}_1| & \leq C |\ln w_\epsilon| |\nabla u||u_{xx}|,
\qquad
|\mathcal{M}_2|
\leq C' w_\epsilon \ln^2 w_\epsilon
\end{split}
\]
with constants $C$, $C'$ depending on $N$, $p^\pm$, $r$. It follows from \eqref{eq:interpol-1}, \eqref{eq:interpol-2}, and the Young inequality that for every $\delta>0$

\begin{equation}
\label{eq:est-J-4-5}
\begin{split}
\int_\Omega & w_\epsilon^{\frac{p+r}{2}-2}\left(|\mathcal{J}^{(4)}_{ij}| +|\mathcal{J}^{(5)}_{ij}|\right)\,dx
 \leq C (r+1) \int_\Omega w_\epsilon^{\frac{p+r-4}{2}}|\nabla u||u_{xx}||\ln w_\epsilon|\,dx
\\
& = C (r+1) \int_{\Omega}\left(w_\epsilon^{\frac{p+r-4}{4}}|u_{xx}|\right) \left(w_\epsilon^{\frac{p+r-4}{4}}|\nabla u||\ln w_\epsilon|\right)\,dx
\\
&
\leq \delta \int_\Omega w_\epsilon^{\frac{p+r-4}{2}}|u_{xx}|^2+C (r+1)^2 \int_\Omega w_\epsilon^{\frac{p+r+2\lambda-4}{2}}|\nabla u|^2\,dx+C'
%\\
%& \leq 2\delta \int_\Omega w_\epsilon ^{\frac{p+r-4}{2}}|u_{xx}|^2 +C''(r,\delta).
\end{split}
\end{equation}
with independent of $r$ constants $C$, $C'=C'(r,\delta,\lambda)$. By virtue of \eqref{eq:interpol-1}, the last integral is bounded for every $0<\lambda<\dfrac{2}{N+2}$:

\[
(r+1)^2 \int_\Omega w_\epsilon^{\frac{p+r+2\lambda-4}{2}}|\nabla u|^2\,dx \leq \delta'\int_\Omega w_\epsilon^{\frac{p+r}{2}-2}|u_{xx}|^2\,dx + C''
\]
with a constant $C''$ depending on $r$. The estimate for the integral of  $w_\epsilon^{\frac{p+r}{2}-2}\mathcal{M}_{1}$ follows from \eqref{eq:est-J-4-5}. To estimate $w_\epsilon^{\frac{p+r}{2}-2}\mathcal{M}_{2}$ we apply \eqref{eq:interpol-2-prim}.

\subsection{The boundary integrals}
The boundary integral in \eqref{eq:residuals-1} needs special estimating only if $\Omega$ is not convex. Let $\partial\Omega\in C^2$ but not necessarily convex. Since $|\operatorname{trace}\mathcal{B}|\leq K$ with a finite constant $K>0$, estimating the boundary integral in \eqref{eq:p-est-1} amounts to estimating the integral of $ w_\epsilon ^{\frac{p+r}{2}-2}|\nabla u|^2$ over $\partial\Omega$.  By \cite[Lemma 1.5.1.9]{Grisvard-2011} there exist a constant $\gamma>0$ and a function $\vec \mu\in C^{\infty}(\overline{\Omega})^N$ such that $\vec \mu\cdot\nu \geq \gamma>0$ on $\partial\Omega$. Then

\[
\begin{split}
\gamma \int_{\partial\Omega} & w_\epsilon^{\frac{p+r}{2}-2}|\nabla u|^2\,dS  \leq \int_{\Omega}\operatorname{div}\left(w_\epsilon^{\frac{p+r}{2}-2}|\nabla u|^2\vec \mu\right)\,dx
 \\
 &
 = \int_{\Omega} \vec \mu \cdot\nabla \left(w_\epsilon^{\frac{p+r}{2}-2}|\nabla u|^2\right)\,dx + \int_{\Omega}(\operatorname{div}\vec \mu )w_\epsilon^{\frac{p+r}{2}-2}|\nabla u|^2\,dx
\\
& \leq C_1\int_{\Omega}w_{\epsilon}^{\frac{p+r}{2}-2}|\nabla u|^2\,dx
+ C_2\int_{\Omega} \left(\frac{|p+r-4|}{2}w_\epsilon^{\frac{p+r}{2}-3}|\nabla u|^2+w_\epsilon^{\frac{p+r}{2}-2}\right)|\nabla u||u_{xx}| \,dx
\\
&
\quad + C_3\int_{\Omega} w_\epsilon^{\frac{p+r}{2}-2}|\nabla u|^2 |\ln w_\epsilon||\nabla p|\,dx
\\
& \equiv C_1\mathcal{I}_1 +C_2\mathcal{I}_2 + C_3\mathcal{I}_3.
\end{split}
\]
To estimate $\mathcal{I}_2$ we apply the Cauchy inequality:

\[
\begin{split}
\mathcal{I}_2 & \leq (p^++r-1)\int_\Omega w_\epsilon^{\frac{p+r}{2}-2}|\nabla u||u_{xx}|\,dx
\\
&
=(p^++r-1)\int_{\Omega} \left(w_\epsilon ^{\frac{p+r}{4}-1}|u_{xx}|\right) \left(w_\epsilon^{\frac{p+r}{4}-1}|\nabla u|\right)\,dx
\\
& \leq \sigma  \int_{\Omega} w_\epsilon^{\frac{p+r}{2}-2}|u_{xx}|^2\,dx + C_\sigma (p_++r-1)^2\int_\Omega w_\epsilon^{\frac{p+r}{2}-2}|\nabla u|^2\,dx
\\
& \leq
\sigma  \int_{\Omega} w_\epsilon^{\frac{p+r}{2}-2}|u_{xx}|^2\,dx + C' (p^++r)^2 \mathcal{I}_1
\end{split}
\]
with an arbitrary $\sigma>0$ and an independent of $r$ constant $C'$. To estimate $\mathcal{I}_1$ we apply \eqref{eq:interpol-1} with $s=0$ (see \eqref{eq:w-u}). The integral $\mathcal{I}_3$ is estimated by virtue of \eqref{eq:elem} and \eqref{eq:interpol-1}. Gathering these inequalities we estimate the boundary integral as follows:

\begin{equation}
\label{eq:boundary-integral}
\begin{split}
\gamma \int_{\partial\Omega} w_\epsilon^{\frac{p+r}{2}-2}|\nabla u|^2\,dS & \leq \alpha \int_{\Omega} w_\epsilon^{\frac{p+r}{2}-2}|u_{xx}|^2\,dx
+C
\end{split}
\end{equation}
with any $\alpha>0$ and an independent of $\epsilon$ constant $C=C\left(\alpha, N, r, p^+, L_p, \|u\|_2,\Omega\right)$. We may now refine Lemma \ref{le:principal-e}.
\begin{lemma}
\label{le:principal-improved}
Let in the conditions of Lemma \ref{le:principal-e} $p^->\dfrac{2N}{N+2}$ and $r\geq 2$. There exist constants $C_1,C_1'>0$ and a positive constant $C_2=C_2(N,r,p^\pm,L,\partial\Omega,\|u\|_{2,\Omega})$ such that the function $w_\epsilon\equiv \epsilon^2+|\nabla u|^2$ satisfies the inequalities

\begin{equation}
\label{eq:p-est-1-prim}
\begin{split}
C_1\int_{\Omega} w_\epsilon^{\frac{p+r}{2}-2} |u_{xx}|^2\,dx
\leq \int_{\Omega}\operatorname{div}\left(w_\epsilon^{\frac{p-2}{2}}\nabla u\right)\operatorname{div}\left(w_\epsilon^{\frac{r-2}{2}}\nabla u\right)\,dx +C_2.
\end{split}
\end{equation}

\begin{equation}
\label{eq:p-est-2-prim}
\begin{split}
C'_1\int_{\Omega} \left|\nabla \left(w_\epsilon^{\frac{p+r-2}{4}}\right)\right|^2\,dx
\leq \int_{\Omega}\operatorname{div}\left(w_\epsilon^{\frac{p-2}{2}}\nabla u\right)\operatorname{div}\left(w_\epsilon^{\frac{r-2}{2}}\nabla u\right)\,dx +C_2.
\end{split}
\end{equation}
\end{lemma}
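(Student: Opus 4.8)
The plan is to rearrange the lower bound of Lemma~\ref{le:principal-e} so that a fixed positive multiple of $\int_\Omega w_\epsilon^{\frac{p+r}{2}-2}|u_{xx}|^2\,dx$ is left alone on one side: one transposes the boundary integral and the residual terms $\mathcal{J}^{(4)}_{ij}$, $\mathcal{J}^{(5)}_{ij}$ (and, in the form \eqref{eq:p-est-1-trans}, also $\mathcal{M}_1$, $\mathcal{M}_2$) to the right-hand side, bounds each of them by a controllably small multiple of that integral plus an additive constant independent of $\epsilon$, and absorbs. A preliminary observation is that the standing assumptions $r\ge 2$ and $p^->\frac{2N}{N+2}$ give $q^-:=p^-+r-2\ge p^->\frac{2N}{N+2}$, so Lemma~\ref{le:racsam} applies with $q(z)=p(z)+r-2$; this is exactly what yields \eqref{eq:interpol-1}, \eqref{eq:interpol-2}, \eqref{eq:interpol-2-prim} and, through \eqref{eq:w-u}, their versions valid at $\nabla u=0$ --- the device used in every absorption step.

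To obtain \eqref{eq:p-est-1-prim} I would start from \eqref{eq:p-est-1}. The boundary integral there is controlled via $|\operatorname{trace}\mathcal{B}|\le K$ and \eqref{eq:boundary-integral} by $\alpha\int_\Omega w_\epsilon^{\frac{p+r}{2}-2}|u_{xx}|^2\,dx+C$ with $\alpha>0$ at our disposal. The residual terms are handled by \eqref{eq:est-J-4-5}: for each pair $(i,j)$ that estimate bounds $\int_\Omega w_\epsilon^{\frac{p+r}{2}-2}(|\mathcal{J}^{(4)}_{ij}|+|\mathcal{J}^{(5)}_{ij}|)\,dx$ by $\delta\int_\Omega w_\epsilon^{\frac{p+r}{2}-2}|u_{xx}|^2\,dx$ plus a term $(r+1)^2\int_\Omega w_\epsilon^{\frac{p+r+2\lambda-4}{2}}|\nabla u|^2\,dx$, which is itself reabsorbed by a second application of \eqref{eq:interpol-1} (legitimate since $2\lambda<\frac{4}{N+2}$), so that after summing over $i,j$ this sum is $\le \delta\int_\Omega w_\epsilon^{\frac{p+r}{2}-2}|u_{xx}|^2\,dx+C$ for any $\delta>0$. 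Fixing $\alpha$ and $\delta$ small enough that the transposed coefficients add up to less than $\frac{\sigma}{2}$, where $\sigma=\min\{1,p^--1\}$ is the constant of Lemma~\ref{le:principal-e}, yields \eqref{eq:p-est-1-prim} with $C_1=\frac{\sigma}{2}$ and $C_2$ depending only on the quantities listed in the statement.

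For \eqref{eq:p-est-2-prim} I would integrate the pointwise inequality \eqref{eq:F-2}, bound $\frac{4}{(p(x)+r-2)^2}\ge\frac{4}{(p^++r-2)^2}$, and absorb the residual $\int_\Omega w_\epsilon^{\frac{p+r}{2}-2}(|\mathcal{M}_1|+|\mathcal{M}_2|)\,dx$ into $\int_\Omega w_\epsilon^{\frac{p+r}{2}-2}|u_{xx}|^2\,dx$ by \eqref{eq:est-J-4-5} and \eqref{eq:interpol-2-prim}; this gives $\int_\Omega |\nabla(w_\epsilon^{\frac{p+r-2}{4}})|^2\,dx\le C\int_\Omega w_\epsilon^{\frac{p+r}{2}-2}|u_{xx}|^2\,dx+C$, and combining with the already-established \eqref{eq:p-est-1-prim} produces \eqref{eq:p-est-2-prim}. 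Equivalently, one may work directly from \eqref{eq:p-est-1-trans}, which already exhibits the term $\int_\Omega|\nabla(w_\epsilon^{\frac{p+r-2}{4}})|^2\,dx$ with a positive coefficient, estimate the boundary integral, $\mathcal{J}^{(4)}_{ij}$, $\mathcal{J}^{(5)}_{ij}$ and $\mathcal{M}_1$, $\mathcal{M}_2$ exactly as above, and then feed \eqref{eq:p-est-1-prim} back in to dispose of the leftover $\int_\Omega w_\epsilon^{\frac{p+r}{2}-2}|u_{xx}|^2\,dx$.

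I do not expect a conceptual obstacle --- the statement essentially consolidates the inequalities assembled in Section~\ref{sec:Hessian} --- but the delicate point is the order in which the smallness parameters are chosen. Since \eqref{eq:est-J-4-5} leaves behind a term that must be reabsorbed by a further use of \eqref{eq:interpol-1}, whose constant depends on the exponent involved, one must fix $\lambda$ first, then $\delta$, then $\alpha$; and one must check throughout that every intermediate constant depends only on $N$, $r$, $p^\pm$, the Lipschitz constant of $p$, $\partial\Omega$ and $\|u\|_{2,\Omega}$, and in particular not on $\epsilon$. This $\epsilon$-uniformity is precisely what makes \eqref{eq:p-est-1-prim}--\eqref{eq:p-est-2-prim} usable for the uniform estimates of the subsequent sections.
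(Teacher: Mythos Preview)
Your proposal is correct and follows essentially the same route as the paper: starting from \eqref{eq:p-est-1}, absorbing the boundary integral via \eqref{eq:boundary-integral}, the residual terms $\mathcal{J}^{(4,5)}_{ij}$ via \eqref{eq:est-J-4-5} with a further application of \eqref{eq:interpol-1}, and then deducing \eqref{eq:p-est-2-prim} from \eqref{eq:F-2} (equivalently \eqref{eq:p-est-1-trans}) together with the estimates on $\mathcal{M}_{1,2}$ and the already-established \eqref{eq:p-est-1-prim}. Your write-up is in fact more explicit than the paper's one-line proof, and your remarks on the order of choosing $\lambda,\delta,\alpha$ and on $\epsilon$-uniformity are accurate.
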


\begin{proof}
Inequality \eqref{eq:p-est-1-prim} follows from \eqref{eq:p-est-1} by using estimates \eqref{eq:est-J-4-5}, the estimates on $\mathcal{M}_{1,2}$, and estimate \eqref{eq:boundary-integral}.
\end{proof}

\begin{corollary}
\label{cor:p-prim} Let the functions $u$, $p$ and the constant $r$ satisfy the conditions of Lemma \ref{le:principal-improved} for a.e. $t\in (0,T)$. Then

\begin{equation}
\label{eq:est-cyl}
\begin{split}
& c_1\int_{Q_T} w_\epsilon^{\frac{p(z)+r}{2}-2} |u_{xx}|^2\,dz
%\\
%&
\leq \int_{Q_T}\operatorname{div}\left(w_\epsilon^{\frac{p(z)-2}{2}}\nabla u\right)\operatorname{div}\left(w_\epsilon^{\frac{r-2}{2}}\nabla u\right)\,dz +c_2,
\end{split}
\end{equation}

\begin{equation}
\label{eq:est-cyl-1}
\begin{split}
c'_1\int_{Q_T} \left|\nabla \left(w_\epsilon^{\frac{p(z)+r-2}{4}}\right)\right|^2\,dz
%\\
%&
\leq \int_{Q_T}\operatorname{div}\left(w_\epsilon^{\frac{p(z)-2}{2}}\nabla u\right)\operatorname{div}\left(w_\epsilon^{\frac{r-2}{2}}\nabla u\right)\,dz +c'_2
\end{split}
\end{equation}
with the constants $c_1,c_1',c_2, c_2'$ depending on the same quantities as the constants in \eqref{eq:p-est-1-prim}, $\operatorname{ess}\sup\limits_{(0,T)}\|u\|_{2,\Omega}^2$ and $T$.
\end{corollary}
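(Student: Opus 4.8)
The plan is to obtain \eqref{eq:est-cyl} and \eqref{eq:est-cyl-1} from Lemma \ref{le:principal-improved} by integration in time. By hypothesis, for a.e. fixed $t\in(0,T)$ the function $u(\cdot,t)$, the exponent $p(\cdot,t)$, and the constant $r$ satisfy the assumptions of Lemma \ref{le:principal-improved}; hence, with $w_\epsilon=\epsilon^2+|\nabla u(\cdot,t)|^2$, inequalities \eqref{eq:p-est-1-prim} and \eqref{eq:p-est-2-prim} hold for a.e. $t$, with the constant $C_2$ replaced by a constant $C_2(t)$ of the same form but with $\|u\|_{2,\Omega}$ replaced by $\|u(\cdot,t)\|_{2,\Omega}$.

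The first step is to render $C_2(t)$ independent of $t$. Retracing the construction of $C_2$ --- through the interpolation inequality \eqref{eq:interpol-1} (and hence Lemma \ref{le:racsam}), the estimates \eqref{eq:est-J-4-5} for $\mathcal{J}_{ij}^{(4,5)}$ and for $\mathcal{M}_{1,2}$, and the boundary estimate \eqref{eq:boundary-integral} --- one sees that the dependence of $C_2$ on the $L^2$-norm enters only through the constant in Lemma \ref{le:racsam} and is at most polynomial in $\|u\|_{2,\Omega}$. Therefore there is a constant $\overline{C}_2=\overline{C}_2(N,r,p^\pm,L_p,\partial\Omega)$ such that
\[
C_2(t)\le \overline{C}_2\left(1+\operatorname{ess\,sup}\limits_{(0,T)}\|u(\cdot,\tau)\|_{2,\Omega}^2\right)\qquad\text{for a.e. }t\in(0,T).
\]

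The second step is the integration in $t$. In the setting where the corollary is applied $u$ is a classical solution, so every integrand above is continuous on $\overline{Q}_T$ and $t\mapsto\|u(\cdot,t)\|_{2,\Omega}$ is continuous and bounded on $[0,T]$; in particular the functions $t\mapsto\int_\Omega(\cdot)\,dx$ on both sides are measurable, and the essential supremum in the last display is a finite constant. Integrating \eqref{eq:p-est-1-prim} (with $C_2(t)$ on the right) over $(0,T)$ and using Fubini's theorem converts the left-hand side and the first term on the right into the corresponding integrals over $Q_T$, while $\int_0^T C_2(t)\,dt\le T\,\overline{C}_2\left(1+\operatorname{ess\,sup}\limits_{(0,T)}\|u\|_{2,\Omega}^2\right)$. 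This is \eqref{eq:est-cyl} with $c_1=C_1$ and $c_2=T\,\overline{C}_2\left(1+\operatorname{ess\,sup}\limits_{(0,T)}\|u\|_{2,\Omega}^2\right)$; repeating the argument with \eqref{eq:p-est-2-prim} in place of \eqref{eq:p-est-1-prim} yields \eqref{eq:est-cyl-1}. I do not expect any real obstacle here: this is a routine slicing-and-Fubini passage, and the only point requiring care is to bookkeep the $\|u\|_{2,\Omega}$-dependence of the constant from Lemma \ref{le:principal-improved} so that it remains finite after integrating in time, for which the uniform bound $\operatorname{ess\,sup}_{(0,T)}\|u\|_{2,\Omega}<\infty$ is exactly what is needed.
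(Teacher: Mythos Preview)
Your proposal is correct and matches the paper's approach: the corollary is stated there without proof, as an immediate consequence of integrating the time-slice inequalities \eqref{eq:p-est-1-prim}--\eqref{eq:p-est-2-prim} over $(0,T)$ and replacing the $\|u(\cdot,t)\|_{2,\Omega}$-dependence in $C_2$ by $\operatorname{ess\,sup}_{(0,T)}\|u\|_{2,\Omega}$. One small remark: you do not actually need the claim that the dependence of $C_2$ on $\|u\|_{2,\Omega}$ is polynomial; it suffices that $C_2$ is nondecreasing in this argument (which is clear from the construction via Lemma~\ref{le:racsam}), so that $C_2(t)\le C_2\big(\operatorname{ess\,sup}_{(0,T)}\|u\|_{2,\Omega}\big)$ for a.e.\ $t$.
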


\section{Uniform estimates on the gradient of a classical solution}\label{sec:est-source}
In this section, we establish the global uniform estimates on the gradients of classical solutions to the regularized problem \eqref{eq:main-reg} with smooth data $\textbf{data}_m$ and $\epsilon\in (0,1)$. For the convenience of presentation, we omit the subindex $m$ of the exponents $p$, $q$, $s$.

\subsection{Equation without nonlinear sources - I: the case $\sigma= N+2$} Let $u$ be the classical solution of problem \eqref{eq:main-reg} with smooth $\textbf{data}_m$. We begin with the model case $F_\epsilon=0$. Set $w_\epsilon= \epsilon^2+|\nabla u|^2$, multiply equation \eqref{eq:main-reg} by $-\div\left(w_\epsilon^{\frac{r-2}{2}} \nabla u\right)$ with $r \geq  \max\{2,p^+\}$, and integrate the result over $\Omega$:

\[
\begin{split}
\frac{1}{r}\dfrac{d}{dt} & \int_{\Omega}w_\epsilon^{\frac{r}{2}}\,dx + \int_{\Omega}\operatorname{div}\left(w_\epsilon^{\frac{p-2}{2}}\nabla u\right)\operatorname{div}\left(w_\epsilon^{\frac{r-2}{2}}\nabla u\right)\,dx
= -\int_{\Omega}f \operatorname{div}\left(w_\epsilon^{\frac{r-2}{2}}\nabla u\right)\,dx.
\end{split}
\]
By \eqref{eq:p-est-1} and \eqref{eq:p-est-1-prim}, \eqref{eq:p-est-2-prim}

\begin{equation}
\label{eq:a-priori-1}
\begin{split}
\frac{1}{r}\dfrac{d}{dt} \int_{\Omega}w_\epsilon^{\frac{r}{2}}\,dx
& + C_0\int_{\Omega}w_\epsilon^{\frac{p+r}{2}-2}|u_{xx}|^2\,dx
 + C_1\int_\Omega \left|\nabla \left(w_\epsilon^{\frac{p+r-2}{4}}\right)\right|^2\,dx
\\
&
\leq \int_\Omega |f|\left|\div\left(w_\epsilon^{\frac{r-2}{2}} \nabla u\right)\right|\,dx +C \equiv\mathcal{J} +C
\end{split}
\end{equation}
with constants $C$, $C_0$, $C_1$ depending on $r$, $p^\pm$, $N$, $\partial \Omega$, $\|u(t)\|_{2,\Omega}$. By Young's inequality

\[
\begin{split}
\mathcal{J} & = \int_{\Omega}|f|\left |w_\epsilon^{\frac{r-2}{2}} \Delta u + (r-2) w_\epsilon^{\frac{r-2}{2} -1} \sum_{i,j =1}^N D_iuD_juD_{ij}^2u\right|\,dx
\\
    & \leq C \int_{\Omega} |f| w_\epsilon^{\frac{r-2}{2}} |u_{xx}| ~dx + (r-2) \int_{\Omega} |f| w_\epsilon^{\frac{r-2}{2}-1} |\nabla u|^2 |u_{xx}| ~dx
    \\
    & \leq C (r-1) \int_{\Omega} \left(|f| w_\epsilon^{\frac{r-2}{2} - \frac{r+p}{4}+1} \right) \left(w_\epsilon^{\frac{r+p}{4}-1} |u_{xx}| \right) ~dx
    \\
    & \leq \delta \int_{\Omega} w_\epsilon^{\frac{r+p}{2}-2} |u_{xx}|^2~dx + C_\delta r^2 \int_{\Omega} f^2  w_\epsilon^{\frac{r-p}{2}} ~dx
\end{split}
\]
with any $\delta>0$. For the sufficiently small $\delta$ the first term on the right-hand side is absorbed in the left-hand side, and \eqref{eq:a-priori-1} is continued as

\begin{equation}
\label{eq:a-priori-5}
\begin{split}
\frac{1}{r}\dfrac{d}{dt} \int_{\Omega}w_\epsilon^{\frac{r}{2}}\,dx
& + C_0
\int_{\Omega}w_\epsilon^{\frac{p+r}{2}-2}|u_{xx}|^2\,dx
+ C_1\int_\Omega \left|\nabla \left(w_\epsilon^{\frac{p+r-2}{4}}\right)\right|^2\,dx
\\
& \leq C_2 +C_3r^2\int_{\Omega}f^2 w_\epsilon^{\frac{r-p}{2}}\,dx %\equiv C+C'r^2\Phi
\end{split}
\end{equation}
with known constants $C_i$ depending on $r$, $p^\pm$, $N$, $\partial \Omega$, and $C_2$ depending also on $\|u(t)\|_{2,\Omega}$.
Integrating \eqref{eq:a-priori-5} in $t$ we obtain

\begin{equation}
\label{eq:ODI-new-1}
\begin{split}
\frac{1}{r}\sup_{(0,T)} \int_{\Omega}w_\epsilon^{\frac{r}{2}}\,dx  & + C_0\int_{Q_T} \left|\nabla \left(w_\epsilon^{\frac{p+r-2}{4}}\right)\right|^2\,dz + C_1\int_{Q_T} w_\epsilon^{\frac{p+r}{2}-2}|u_{xx}|^2\,dz
\\
&
\leq C_2T+ \frac{1}{r}\int_{\Omega}w_{\epsilon}^{\frac{r}{2}}(x,0)\,dx + C_3r^2\int_{Q_T}f^2w_\epsilon^{\frac{r-p}{2}}\,dz.
\end{split}
\end{equation}
It remains to estimate

\begin{equation}
\label{eq:integral-I}
\mathcal{I}=\int_{Q_T}f^2 w_\epsilon^{\frac{r-p}{2}}\,dz.
\end{equation}
For every $\sigma>2$

\[
\mathcal{I}\leq \|f\|_{\sigma,Q_T}^2 \left(\int_{Q_T}w_{\epsilon}^{\frac{(r-p)\sigma}{2(\sigma-2)}}\,dz\right)^{\frac{\sigma-2}{\sigma}}.
\]
Set $\sigma=N+2$ and observe that since $p>\frac{2N}{N+2}$ by assumption, then

\[
\frac{(r-p)\sigma}{2(\sigma-2)}=\frac{(r-p)(N+2)}{2N}<r\dfrac{N+2}{2N}-1 =r\left(\frac{1}{2}+\frac{1}{N}\right)-1<\frac{p+r-2+\frac{2r}{N}}{2}.
\]
By the Young inequality, for every $\mu>0$

\begin{equation}
  \label{eq:I-1}
  \mathcal{I}\leq \|f\|_{N+2,Q_T}^2 \left(C+ \mu \int_{Q_T}w_{\epsilon}^{\frac{p+r-2+\frac{2r}{N}}{2}}\,dz\right)^{\frac{N}{N+2}},\qquad C=C(\mu).
  \end{equation}
By the Sobolev embedding $W^{1,\frac{2N}{N+2}}(\Omega)
\subset L^2(\Omega)$ and for every $v\in W^{1,\frac{2N}{N+2}}(\Omega)$

\begin{equation}
\label{eq:Sob-1}
\|v\|_{2,\Omega}^2\leq C_{s}\left(\|\nabla v\|_{\frac{2N}{N+2},\Omega}^2 + \|v\|^2_{\frac{2N}{N+2},\Omega}\right)
\end{equation}
with an independent of $v$ constant $C_s$. Applying \eqref{eq:Sob-1} to $w_\epsilon^{\frac{p+r-2+\frac{2r}{N}}{4}}$ we arrive at the inequality

\[
\begin{split}
\int_{\Omega}w_\epsilon^{\frac{p+r-2+\frac{2r}{N}}{2}}\,dx \leq C_s'\left(\int_\Omega \left|\nabla \left(w_\epsilon^{\frac{p+r-2+\frac{2r}{N}}{4}}\right)\right|^{\frac{2N}{N+2}} \,dz\right)^{\frac{N+2}{N}} + C_s' \left(\int_\Omega \left(w_\epsilon^{\frac{p+r-2+\frac{2r}{N}}{4}}\right)^{\frac{2N}{N+2}} \,dz\right)^{\frac{N+2}{N}},
\end{split}
\]
with $C'_s$ depending on $C_s$ and $N$. By the straightforward computation

\[
\begin{split}
D_i\left(w_\epsilon^{\frac{p+r-2+\frac{2r}{N}}{4}}\right)=\frac{1}{4}  w_\epsilon^{\frac{p+r-2+\frac{2r}{N}}{4}}\ln w_\epsilon D_ip + \frac{p+r-2+\frac{2r}{N}}{2}w_{\epsilon}^{\frac{p+r-2+\frac{2r}{N}}{4}-1}\sum_{j=1}^N
D_{j}uD_{ij}^2u.
\end{split}
\]
It follows that

\[
\begin{split}
\int_{\Omega}w_\epsilon^{\frac{p+r-2+\frac{2r}{N}}{2}} & \,dx \leq C_s'' \left(r^2\int_\Omega \left( w_\epsilon^{\frac{p+r-4+\frac{2r}{N}}{4}}|u_{xx}|\right)^{\frac{2N}{N+2}} \,dx\right)^{\frac{N+2}{N}}
\\
& + C_s'' \left(\int_\Omega \left( w_\epsilon^{\frac{p+r-2+\frac{2r}{N}}{4}}|\ln w_\epsilon|\right)^{\frac{2N}{N+2}}\,dx\right)^{\frac{N+2}{N}}
%\\
%&
+C_s''\left(\int_\Omega \left( w_\epsilon^{\frac{p+r-2+\frac{2r}{N}}{4}}\right)^{\frac{2N}{N+2}}\,dx\right)^{\frac{N+2}{N}}
\\
& \equiv C_s''\left(r^2I_1+I_2+I_3\right)
\end{split}
\]
with a constant $C_s''$ depending on $C_s'$, $p^\pm$, $L_p$, $N$, and $r$, but independent of $u$ and $w_\epsilon$. The integrals $I_k$ are estimated separately. By H\"older's inequality with the conjugate exponents $\frac{N+2}{N}$ and $\frac{N+2}{2}$

\[
\begin{split}
I_1^{\frac{N}{N+2}} &  \equiv \int_{\Omega}\left(w_{\epsilon}^{\frac{p+r-4}{2}}|u_{xx}|^2\right)^{\frac{N}{N+2}} \left(w_{\epsilon}^{\frac{r}{N+2}}\right)\,dx \leq \left(\int_{\Omega} w_{\epsilon}^{\frac{p+r-4}{2}}|u_{xx}|^2\,dx\right)^{\frac{N}{N+2}} \left(\int_\Omega w_\epsilon^{\frac{r}{2}}\,dx\right)^{\frac{2}{N+2}},
\end{split}
\]
whence

\[
I_1\leq \left(\int_{\Omega} w_{\epsilon}^{\frac{p+r-4}{2}}|u_{xx}|^2\,dx \right) \left(\int_\Omega w_\epsilon^{\frac{r}{2}}\,dx\right)^{\frac{2}{N}}.
\]
Proceeding in the same way we estimate

\[
\begin{split}
I_2 & \equiv \left(\int_\Omega \left( w_\epsilon^{\frac{p+r-2}{4}}|\ln w_\epsilon|\right)^{\frac{2N}{N+2}}\left(w_\epsilon^{\frac{r}{N+2}}\right) \,dx\right)^{\frac{N+2}{N}}
%\\
%&
\leq \int_\Omega  w_\epsilon^{\frac{p+r-2}{2}}\ln^2 w_\epsilon\,dx \left(\int_\Omega w_\epsilon^{\frac{r}{2}}\,dx\right)^{\frac{2}{N}},
\end{split}
\]

\[
I_3 \equiv \int_\Omega \left( w_\epsilon^{\frac{p+r-2}{4}}\right)^{\frac{2N}{N+2}}\left(w_\epsilon^{\frac{r}{N+2}} \right)\,dx\leq \int_\Omega  w_\epsilon^{\frac{p+r-2}{2}}\,dx \left(\int_\Omega w_\epsilon^{\frac{r}{2}}\,dx\right)^{\frac{2}{N}}.
\]
Gathering the estimates on $I_k$ we obtain the inequality

\begin{equation}
\label{eq:new-1}
\int_{\Omega} w_\epsilon^{\frac{p+r-2+\frac{2r}{N}}{2}}\,dx\leq \widetilde{C} \Pi_1(t) \Pi_2^{\frac{2}{N}}(t),
\end{equation}
where

\[
\begin{split}
& \Pi_1(t) = r^2\int_\Omega w_\epsilon^{\frac{p+r-4}{2}}|u_{xx}|^2\,dx+ \int_{\Omega}w_\epsilon^{\frac{p+r-2}{2}}\left(1+\ln^2w_\epsilon\right)\,dx,
\\
& \Pi_2(t)= \int_\Omega w_\epsilon^{\frac{r}{2}}\,dx,
\end{split}
\]
and $\widetilde C$ is a constant independent of $u$ and $w_\epsilon$. It follows that

\begin{equation}
\label{eq:new-2}
\int_{Q_T} w_\epsilon^{\frac{p+r-2+\frac{2r}{N}}{2}}\,dx\leq \widetilde{C} \left(\sup_{(0,T)}\Pi_2(t)\right)^{\frac{2}{N}}\int_{Q_T}\Pi_1(t)\,dt.
\end{equation}
Now we plug the obtained inequalities into \eqref{eq:I-1}:

\[
\begin{split}
\mathcal{I} & \leq \|f\|^2_{N+2,Q_T}\left(C(\mu)+\mu C'\left(\sup_{(0,T)}\Pi_2(t)\right)^{\frac{2}{N}}\int_{Q_T}\Pi_1(t)\,dt \right)^{\frac{N}{N+2}}
\\
& \leq C''\|f\|^2_{N+2,Q_T}\left(C(\mu)+\mu C' \left(\sup_{(0,T)}\Pi_2(t)\right)^{\frac{2}{N+2}}\left(\int_{Q_T}\Pi_1(t) \,dt\right)^{\frac{N}{N+2}} \right)
\end{split}
\]
with an arbitrary $\mu>0$ and constants $C(\mu)$, $C'$, $C''$ independent of $w_\epsilon$ and $u$. By Young's inequality with the conjugate exponents $\frac{N+2}{N}$ and $\frac{N+2}{2}$ we continue the last inequality as follows:

\[
\begin{split}
\mathcal{I} & \leq C''\|f\|^2_{N+2,Q_T}\left(C(\mu)+ C'\left(\mu^{\frac{N+2}{4}} \sup_{(0,T)}\Pi_2(t)\right)^{\frac{2}{N+2}} \left(\mu^{\frac{N+2}{2N}}\int_{Q_T}\Pi_1(t)\,dt\right)^{\frac{N}{N+2}}
\right)
\\
& \leq  C''\|f\|^2_{N+2,Q_T}\left(C(\mu)+ C'\mu^{\frac{N+2}{4}} \sup_{(0,T)}\Pi_2(t)+ C'  \mu^{\frac{N+2}{2N}}\int_{Q_T}\Pi_1(t)\,dt\right)
\\
& \equiv C''\|f\|^2_{N+2,Q_T}\left(C(\mu)+ \mu^{\frac{N+2}{4}} \sup_{(0,T)}\|w_\epsilon\|_{\frac{r}{2},\Omega}^{\frac{r}{2}}\right.
\\
& \qquad \left.
+C'\mu^{\frac{N+2}{2N}}\left[r^2\int_{Q_T} w_\epsilon^{\frac{p+r-4}{2}}|u_{xx}|^2\,dz+\int_{Q_T} w_\epsilon^{\frac{p+r-2}{2}} \left(1+\ln^2w_\epsilon\right)\,dz\right]\right).
\end{split}
\]
By Lemma \ref{le:racsam} and \eqref{eq:elem}, the second integral in the square brackets is bounded by

\[
\nu \int_{Q_T}w_\epsilon^{\frac{p+r-4}{2}}|u_{xx}|^2\,dz + \widehat C
\]
with an arbitrary $\nu>0$ and a constant $\widehat C=\widehat C(r,N,p^\pm,r,L_p,\nu,\|u\|_{2,\Omega})$. Thus,

\begin{equation}
\label{eq:I-2}
\mathcal{I}\leq C_1 \|f\|_{N+2,Q_T}^2\left( \mu^{\frac{N+2}{4}} \sup_{(0,T)}\|w_\epsilon\|_{\frac{r}{2},\Omega}^{\frac{r}{2}} + \mu^{\frac{N+2}{2N}} \left(r^2+\nu\right) \int_{Q_T}w_\epsilon^{\frac{p+r-4}{2}}|u_{xx}|^2\,dz + C_2\right)
\end{equation}
with a constant $C_1$ depending only on $\textbf{data}$, and $C_2$ depending on $\textbf{data}$ and $\mu$, $\nu$. Substituting \eqref{eq:I-2} into \eqref{eq:ODI-new-1}, using \eqref{eq:interpol-1} and choosing $\mu$ and $\nu$ sufficiently small we transform \eqref{eq:ODI-new-1} into

\begin{equation}
\label{eq:final}
\begin{split}
\sup_{(0,T)} \int_{\Omega}w_\epsilon^{\frac{r}{2}}\,dx  & + \alpha\int_{Q_T} \left|\nabla \left(w_\epsilon^{\frac{p+r-2}{4}}\right)\right|^2\,dz + \beta \int_{Q_T} w_\epsilon^{\frac{p+r+\rho-4}{2}}|\nabla u|^2 \,dz
%\\
%&
\leq \gamma + \int_{\Omega}w_{\epsilon}^{\frac{r}{2}}(x,0)\,dx
\end{split}
\end{equation}
with any $0<\rho<\frac{4}{N+2}$ and finite positive constants $\alpha$, $\beta$, $\gamma$ depending only on $\textbf{data}$, $r$, $\rho$, and $\|f\|_{N+2,Q_T}$ but independent of $\epsilon$.

\subsection{Equation without nonlinear sources - II: the case $\sigma \in (2,N+2)$}
\label{subsec:est-2}
We refine the estimates on the integral \eqref{eq:integral-I} and extend them to the case of low integrability of $f$.
Let us take $\sigma>2$, $r\geq \max\{2,p^+\}$, and search for a (optimal) $\beta$ such that the following inequality holds true:
\begin{equation}\label{imp:ineq:modi}
    \frac{(r-p)\sigma}{2(\sigma-2)} \leq \frac{p+r-2+\frac{2r}{\beta}}{2}.
\end{equation}
By the Young inequality (cf. with \eqref{eq:I-1})

\begin{equation}
  \label{eq:I-1-modi}
  \mathcal{I}\leq \|f\|_{\sigma,Q_T}^2 \left(C+ \int_{Q_T}w_{\epsilon}^{\frac{p+r-2+\frac{2r}{\beta}}{2}}\,dz\right)^{\frac{\sigma-2}{\sigma}}.
  \end{equation}
Fix

\[
\mu \in \left(\frac{N}{N+2},1\right) \qquad \text{and set} \qquad \alpha_\sharp = 2 \mu, \quad  \quad \alpha_\sharp^\ast = \dfrac{2\mu N}{N-2\mu}> 2.
\]
By the Sobolev embedding, $W^{1,\alpha_\sharp}(\Omega)
\subset L^{\alpha_\sharp^\ast}(\Omega) \subset L^2(\Omega)$, and for every $v\in W^{1, \alpha_\sharp}(\Omega)$

\begin{equation}
\label{eq:Sob-1-modi}
\|v\|_{2,\Omega}^{2} \leq C_{s}\left(\|\nabla v\|_{\alpha_\sharp,\Omega}^{2} + \|v\|^{2}_{\alpha_\sharp,\Omega}\right)
\end{equation}
with an independent of $v$ constant $C_s$. Applying \eqref{eq:Sob-1-modi} to $w_\epsilon^{\frac{p+r-2+\frac{2r}{\beta}}{4}} $ we arrive at the inequality

\[
\begin{split}
\int_{\Omega}w_\epsilon^{\frac{p+r-2+\frac{2r}{\beta}}{2}}\,dx \leq C_s'\left(\int_\Omega \left|\nabla \left(w_\epsilon^{\frac{p+r-2+\frac{2r}{\beta}}{4}}\right)\right|^{2 \mu}\,dz \right)^{\frac{1}{\mu}} + C_s' \left(\int_\Omega \left(w_\epsilon^{\frac{p+r-2+\frac{2r}{\beta}}{4}}\right)^{2 \mu}\,dz \right)^{\frac{1}{\mu}},
\end{split}
\]
with $C'_s$ depending on $C_s$ and $\sigma$. By the straightforward computation

\[
\begin{split}
D_i\left(w_\epsilon^{\frac{p+r-2+\frac{2r}{\beta}}{4}}\right)=\frac{1}{4}  w_\epsilon^{\frac{p+r-2+\frac{2r}{\beta}}{4}}\ln w_\epsilon D_ip + \frac{p+r-2+\frac{2r}{\beta}}{4} w_{\epsilon}^{\frac{p+r-2+\frac{2r}{\beta}}{4}-1}\sum_{j=1}^N
D_{j}uD_{ij}^2u.
\end{split}
\]
It follows that

\[
\begin{split}
\int_{\Omega}w_\epsilon^{\frac{p+r-2+\frac{2r}{\beta}}{2}} \,dx & \leq C_s'' \left(r^2\int_\Omega \left( w_\epsilon^{\frac{p+r-4+\frac{2r}{\beta}}{4}}|u_{xx}|\right)^{2 \mu}\,dx \right)^{\frac{1}{\mu}}
\\
& \qquad  + C_s'' \left(\int_\Omega \left( w_\epsilon^{\frac{p+r-2+\frac{2r}{\beta}}{4}}|\ln w_\epsilon|\right)^{2 \mu}\,dx\right)^{\frac{1}{\mu}}
+C_s''\left(\int_\Omega \left( w_\epsilon^{\frac{p+r-2+\frac{2r}{\beta}}{4}}\right)^{2 \mu}\,dx \right)^{\frac{1}{\mu}}
\\
& \equiv C_s''\left(r^2I_1+I_2+I_3\right)
\end{split}
\]
with a constant $C_s''$ depending on $C_s'$, $p^\pm$, $L_p$, $N$, and $r$, but independent of $u$ and $w_\epsilon$. The integrals $I_k$ are estimated separately. By H\"older's inequality with the conjugate exponents $\frac{2}{\alpha_\sharp} =\frac{1}{\mu} >1$ and $ \frac{1}{1-\mu}$

\[
\begin{split}
I_1^{\mu} &  \equiv \int_{\Omega}\left(w_{\epsilon}^{\frac{p+r-4}{2}}|u_{xx}|^2\right)^{\mu} \left(w_{\epsilon}^{\frac{r}{2}}\right)^{\frac{2\mu}{\beta}}\,dx \leq \left(\int_{\Omega} w_{\epsilon}^{\frac{p+r-4}{2}}|u_{xx}|^2\,dx\right)^{\mu} \left(\int_\Omega \left(w_\epsilon^{\frac{r}{2}}\right)^{\frac{2\mu }{\beta(1-\mu)}}\,dx\right)^{(1-\mu)}.
\end{split}
\]
Let

\[
\beta=\frac{2\mu}{1-\mu}.
\]
With this choice of $\beta$ we may estimate

\[
I_1\leq \left(\int_{\Omega} w_{\epsilon}^{\frac{p+r-4}{2}}|u_{xx}|^2\,dx \right) \left(\int_\Omega w_\epsilon^{\frac{r}{2}}\,dx\right)^{\frac{(1-\mu)}{\mu}},
\]
while inequality \eqref{imp:ineq:modi} takes on the form

\begin{equation}
\notag
     \begin{split}
     \frac{(r-p)\sigma}{2(\sigma-2)} & \leq \frac{p+r-2+\frac{r(1-\mu)}{\mu}}{2} \qquad  \Leftrightarrow \quad
      \mu \sigma (r-p)  \leq  \mu (p+r-2)(\sigma-2) + r(1-\mu) (\sigma-2).
     \end{split}
     \end{equation}

Solving the last inequality for $r$ and gathering the result with the conditions of Proposition \ref{pro:pointwise} and using the fact that $\mu > \frac{N}{N+2} > \frac{\sigma-2}{\sigma}$, we arrive at the following restriction on $r$ in terms of $\sigma$, $p^\pm$, and $N$:

\begin{equation}
\label{eq:r-modi}
\max\{p^+,2\}\leq r \leq \frac{2(p^-(\sigma-1) -\sigma + 2)}{\sigma - \frac{\sigma-2}{\mu}}.
\end{equation}

Proceeding in the same way, we estimate

\[
\begin{split}
I_2 & \equiv \left(\int_\Omega \left( w_\epsilon^{\frac{p+r-2}{2}}\ln^2 w_\epsilon\right)^{\mu}\left(w_{\epsilon}^{\frac{r}{2}} \right)^\frac{2\mu}{\beta}\,dx\right)^{\frac{1}{\mu}}
\leq \int_\Omega  w_\epsilon^{\frac{p+r-2}{2}}\ln^{2} w_\epsilon\,dx \left(\int_\Omega w_\epsilon^{\frac{r}{2}}\,dx\right)^{\frac{1-\mu}{\mu}},
\end{split}
\]

\[
I_3 \equiv \left(\int_\Omega \left( w_\epsilon^{\frac{p+r-2}{2}}\right)^{\mu}\left(w_{\epsilon}^{\frac{r}{2}} \right)^\frac{2\mu}{\beta}\,dx\right)^{\frac{1}{\mu}}
\leq \int_\Omega  w_\epsilon^{\frac{p+r-2}{2}}\,dx \left(\int_\Omega w_\epsilon^{\frac{r}{2}}\,dx\right)^{\frac{1-\mu}{\mu}}.
\]

Gathering the estimates on $I_k$ we obtain the inequality

\begin{equation}
\label{eq:new-1-modi}
\int_{\Omega} w_\epsilon^{\frac{p+r-2+\frac{2r}{\beta}}{2}}\,dx\leq \widetilde{C} \Pi_1(t) \Pi_2^{\frac{1-\mu}{\mu}}(t),
\end{equation}
where

\[
\begin{split}
& \Pi_1(t) = r^2\int_\Omega w_\epsilon^{\frac{p+r-4}{2}}|u_{xx}|^2\,dx+ \int_{\Omega}w_\epsilon^{\frac{p+r-2}{2}}\left(1+\ln^2w_\epsilon\right)\,dx,
\\
& \Pi_2(t)= \int_\Omega w_\epsilon^{\frac{r}{2}}\,dx,
\end{split}
\]
and $\widetilde C$ is a constant independent of $u$ and $w_\epsilon$. It follows that

\begin{equation}
\label{eq:new-2-modi}
\int_{Q_T} w_\epsilon^{\frac{p+r-2+\frac{2r}{\beta}}{2}}\,dx\leq \widetilde{C} \left(\sup_{(0,T)}\Pi_2(t)\right)^{\frac{1-\mu}{\mu}}\int_{Q_T}\Pi_1(t)\,dt.
\end{equation}
Now we plug the obtained inequalities into \eqref{eq:I-1-modi}:

\begin{equation}
\label{eq:last-modi}
\begin{split}
\mathcal{I} & \leq \|f\|^2_{\sigma,Q_T}\left(C+ C'\left(\sup_{(0,T)}\Pi_2(t)\right)^{\frac{1-\mu}{\mu}
}\int_{Q_T}\Pi_1(t)\,dt \right)^{\frac{\sigma-2}{\sigma}}
\\
& \leq C''\|f\|^2_{\sigma,Q_T}\left(1+ \left(\sup_{(0,T)}\Pi_2(t)\right)^{\frac{(1-\mu)(\sigma-2)}{\mu \sigma} }\left(\int_{Q_T}\Pi_1(t)\,dt\right)^{\frac{\sigma-2}{\sigma}} \right)
\end{split}
\end{equation}
with constants $C'$, $C''$ independent of $w_\epsilon$ and $u$.

For any $\sigma \in (2, N+2)$ and $1>\mu >  \frac{N}{N+2}$, we have the following inequalities:
\begin{equation}
\label{eq:ineq-app}
\begin{split}
\mu > \frac{N}{N+2} > \frac{\sigma-2}{\sigma} \quad  & \Leftrightarrow \quad 2 \mu - (1-\mu)(\sigma-2) >0 \quad \Leftrightarrow \quad  \frac{\mu(\sigma-2)}{\mu \sigma-(1-\mu)(\sigma-2)}<1
\end{split}
\end{equation}
and
\begin{equation}
     \mu > \frac{N}{N+2} > \frac{(\sigma-2)}{2(\sigma-1)}  \qquad \Leftrightarrow \quad \sigma \mu - (1-\mu)(\sigma-2) >0 \qquad \Leftrightarrow \quad  \frac{(1-\mu)(\sigma-2)}{\mu \sigma} < 1.
\end{equation}

Now, by applying Young's inequality two times with the exponents
\[
\frac{\mu \sigma}{(1-\mu)(\sigma-2)} ,\quad \gamma= \frac{\mu \sigma}{\mu \sigma-(1-\mu)(\sigma-2)}, \quad \text{and}\quad  \frac{\sigma}{\gamma(\sigma-2)},\quad  \frac{\sigma}{\sigma-\gamma(\sigma-2)},
\]

we find that for every $\lambda >0$
\[
\begin{split}
\left(\sup_{(0,T)}\Pi_2(t)\right)^{\frac{(1-\mu)(\sigma-2)}{\mu \sigma}} & \left(\int_{Q_T}\Pi_1(t)\,dt\right)^{\frac{\sigma-2}{\sigma}}
\leq\lambda^\frac{\mu \sigma}{(1-\mu)(\sigma-2)}\sup_{(0,T)}\Pi_2(t) + \frac{1}{\lambda^\gamma} \left(\int_{Q_T}\Pi_1(t)\,dt\right)^{\frac{\gamma(\sigma-2)}{\sigma}}\\
& \leq\lambda^\frac{\mu \sigma}{(1-\mu)(\sigma-2)}\sup_{(0,T)}\Pi_2(t) +  \lambda^\frac{\sigma}{\sigma-2} \int_{Q_T}\Pi_1(t)\,dt + C(\lambda).
\end{split}
\]
Estimate \eqref{eq:last-modi} is then continued as follows: for $\sigma\in (2,N+2)$ and  $\mu \in \left(\frac{N}{N+2}, 1 \right)$,

\[
\begin{split}
\mathcal{I}
& \leq  C''\|f\|^2_{\sigma,Q_T}\left(C(\lambda)+ \lambda^\frac{\mu \sigma}{(1-\mu)(\sigma-2)}
 \sup_{(0,T)}\Pi_2(t)+  \lambda^{\frac{\sigma}{\sigma-2}}\int_{Q_T}\Pi_1(t)\,dt\right)
\\
& \equiv C''\|f\|^2_{\sigma,Q_T}\left(C(\lambda)+ \lambda^\frac{\mu \sigma}{(1-\mu)(\sigma-2)}  \sup_{(0,T)}\|w_\epsilon\|_{\frac{r}{2},\Omega}^{\frac{r}{2}}\right.
\\
& \qquad \qquad \qquad \qquad  \left.
+ C'\lambda^{\frac{\sigma}{(\sigma-2)}}\left[r^2\int_{Q_T} w_\epsilon^{\frac{p+r-4}{2}}|u_{xx}|^2\,dz+\int_{Q_T} w_\epsilon^{\frac{p+r-2}{2}} \left(1+\ln^2w_\epsilon\right)\,dz\right]\right).
\end{split}
\]
By Lemma \ref{le:racsam} and \eqref{eq:elem}, the second integral in the square brackets is bounded by

\[
\nu \int_{Q_T}w_\epsilon^{\frac{p+r-4}{2}}|u_{xx}|^2\,dz + \widehat C
\]
with an arbitrary $\nu>0$ and a constant $\widehat C=\widehat C(r,N,p^\pm,r,L_p,\nu,\|u\|_{2,\Omega})$. Thus,

\begin{equation}
\label{eq:I-2-new}
\mathcal{I}\leq C_1 \|f\|_{\sigma,Q_T}^2\left(\lambda^\frac{\mu \sigma}{(1-\mu)(\sigma-2)}  \sup_{(0,T)}\|w_\epsilon\|_{\frac{r}{2},\Omega}^{\frac{r}{2}} + \lambda^{\frac{\sigma}{\sigma-2}}\left(r^2+\nu\right) \int_{Q_T}w_\epsilon^{\frac{p+r-4}{2}}|u_{xx}|^2\,dz + C_2\right)
\end{equation}
with a constant $C_1$ depending only on $\textbf{data}$, and $C_2$ depending on $\textbf{data}$ and $\mu$, $\nu$. Substituting \eqref{eq:I-2-new} into \eqref{eq:ODI-new-1}, using \eqref{eq:interpol-1} and choosing $\lambda$ and $\nu$ sufficiently small we transform \eqref{eq:ODI-new-1} into

\begin{equation}
\label{eq:final-new}
\begin{split}
\sup_{(0,T)} \int_{\Omega}w_\epsilon^{\frac{r}{2}}\,dx  & + \alpha\int_{Q_T} \left|\nabla \left(w_\epsilon^{\frac{p+r-2}{4}}\right)\right|^2\,dz + \beta \int_{Q_T} w_\epsilon^{\frac{p+r+\rho-4}{2}}|\nabla u|^2 \,dz
\leq \gamma + \int_{\Omega}w_{\epsilon}^{\frac{r}{2}}(x,0)\,dx
\end{split}
\end{equation}
with any $0<\rho<\frac{4}{N+2}$ and finite positive constants $\alpha$, $\beta$, $\gamma$ depending only on $\textbf{data}$, $r$, $\rho$, and $\|f\|_{\sigma,Q_T}$ but independent of $\epsilon$. Since $\mu \in \left(\frac{N}{N+2},1\right)$ is arbitrary, condition \eqref{eq:filter} allows one to choose $\mu$ so close to $\frac{N}{N+2}$ that \eqref{eq:r-modi} holds true.

\subsection{Equation with nonlinear sources}

Assume first that $f\in L^{N+2}(Q_T)$. Let $u$ be the classical solution of problem \eqref{eq:main-reg} corresponding to $\textbf{data}$, and the nonlinear source $F_\epsilon(z,u,\nabla u)$ defined in \eqref{eq:sources}. By Proposition \ref{pro:existence-smooth-data} the solution $u(z)$ satisfies estimate \eqref{eq:unif-reg}. Multiplying \eqref{eq:main-reg} by $\operatorname{div}\left(w_\epsilon^{\frac{r-2}{2}}\nabla u\right)$ and following the derivation of \eqref{eq:a-priori-1}, \eqref{eq:a-priori-5}, we arrive at the inequality

\[
\begin{split}
\frac{1}{r}\dfrac{d}{dt} \int_{\Omega}w_\epsilon^{\frac{r}{2}}\,dx
& + C_0
\int_{\Omega}w_\epsilon^{\frac{p+r}{2}-2}|u_{xx}|^2\,dx
+ C_1\int_\Omega \left|\nabla \left(w_\epsilon^{\frac{p+r-2}{4}}\right)\right|^2\,dx
\\
& \leq C_2 +C_3r^2\int_{\Omega}(f^2+F_{1\epsilon}^2)w_\epsilon^{\frac{r-p}{2}}\,dx +\int_\Omega F_{2\epsilon}\operatorname{div}\left(w_\epsilon^{\frac{r-2}{2}}\nabla u \right)\,dx.
\end{split}
\]
Integrating in $t$, by analogy with \eqref{eq:I-1} we find that for every $t\in (0,T)$

\begin{equation}
\label{eq:ODI-new-4-prim-prim}
\begin{split}
\frac{1}{r}\int_{\Omega} & w_\epsilon^{\frac{r}{2}}(t)\,dx + C_1\int_{Q_t} \left|\nabla \left(w_\epsilon^{\frac{p+r-2}{4}}\right)\right|^2\,dz + C_2\int_{Q_t} w_\epsilon^{\frac{p+r}{2}-2}|u_{xx}|^2\,dz
\\
&
\leq
C_3r^2\left(\|f\|^2_{N+2,Q_t} + \|F_{1\epsilon}\|_{N+2,Q_t}^2\right) \left(C+ \lambda  \int_{Q_T}w_{\epsilon}^{\frac{p+r-2+\frac{2r}{N}}{2}}\,dz\right)^{\frac{N}{N+2}}
\\
& + C_4 \left|\int_{Q_t}F_{2\epsilon}\operatorname{div}\left(w_\epsilon^{\frac{r-2}{2}}\nabla u \right)\,dz\right| + C_2
+ \frac{1}{r}\int_{\Omega}w_{\epsilon}^{\frac{r}{2}}(x,0)\,dx
\end{split}
\end{equation}
with any $\lambda\in (0,1)$ and $r\geq  \max\{p^+,2\}$. To estimate $\|F_{1\epsilon}\|_{N+2,Q_t}^2$ we use the Young inequality,

\[
\|F_{1\epsilon}\|^2_{N+2,Q_t} \leq C\left(\int_{Q_t}(\epsilon^2+u^2)^{(N+2)\frac{q-1}{2}}\,dz\right)^{\frac{2}{N+2}} \leq C_1\left(1 + \|u\|_{(N+2)(q^+-1),Q_t}^{2(q^+-1)}\right),
\]
and then apply the parabolic embedding inequality \cite[Ch.I, Proposition 3.2]{DB}:

\begin{equation}
\label{eq:DB}
 \|u\|_{(N+2)(q^+-1),Q_t}^{q^+-1}\leq C\left(\|\nabla u\|_{p^-,Q_t}^{q^+-1}+\sup_{(0,T)}\|u(t)\|^{q^+-1}_{2,\Omega}\right),
\end{equation}
provided that

\[
(N+2)(q^+-1)\leq p^-\dfrac{N+2}{N}\qquad \Leftrightarrow \qquad q^+\leq 1+ \dfrac{p^-}{N}.
\]
By virtue of \eqref{eq:unif-reg}, both terms on the right-hand side of \eqref{eq:DB} are bounded by a constant depending only on $\textbf{data}$.

By the Young inequality
\[
\begin{split}
 \int_{Q_t}\left|F_{2\epsilon}(z,\nabla u)\operatorname{div}\left(w_\epsilon^{\frac{r-2}{2}}\nabla u\right)\right|\,dz & \leq C \int_{Q_t} w_\epsilon^{\frac{s+r-3}{2}}|u_{xx}|\,dz
 \\
 & =C\int_{Q_t} w_{\epsilon}^{\frac{2s+r-p-2}{4}} \left(w_\epsilon^{\frac{p+r-4}{2}}|u_{xx}|^2\right)^{\frac{1}{2}}\,dz
\\
& \leq \delta \int_{Q_t}w_\epsilon^{\frac{p+r-4}{2}}|u_{xx}|^2\,dz + C' \int_{Q_t} w_{\epsilon}^{\frac{2s+r-p-2}{2}}\,dz
\end{split}
\]
with any $\delta>0$. Since $s(z)\leq p(z)$, $r\geq 2$, and $p^->\frac{2N}{N+2}$ by assumption, there is $\lambda\in \left(0,\frac{4}{N+2}\right)$ such that $p+r+\lambda>4$ in $Q_T$. We notice that (cf. with \eqref{eq:w-u})
\[
\begin{split}
w_{\epsilon}^{\frac{2s+r-p-2}{2}} & \leq 1+  w_{\epsilon}^{\frac{p+r+\lambda-2}{2}} \leq 1 + \begin{cases}
2w_{\epsilon}^{\frac{p+r+\lambda-4}{2}}|\nabla u|^2 & \text{if $|\nabla u|\geq \epsilon$},
\\
(2\epsilon^2)^{\frac{p+r+\lambda-2}{2}} & \text{otherwise}
\end{cases}
\\
& \leq 2\left(1+ w_{\epsilon}^{\frac{p+r+\lambda-4}{2}}|\nabla u|^2\right)
\end{split}
\]
and apply \eqref{eq:interpol-1}: for every $\delta>0$
\[
\int_{Q_T}w_{\epsilon}^{\frac{2s+r-p-2}{2}}\,dz\leq 2T|\Omega| + \int_{Q_T}w_{\epsilon}^{\frac{p+r+\lambda-4}{2}}|\nabla u|^2\,dz\leq \delta \int_{Q_T}w_\epsilon^{\frac{p+r}{2}-2}|u_{xx}|^2\,dz +C
\]
with a constant $C$ depending only on $\textbf{data}$ and $r$. Choosing $\delta$ sufficiently small, we absorb the last integral on the right-hand side of \eqref{eq:ODI-new-4-prim-prim} in the left-hand side and arrive at inequality \eqref{eq:final}. The above arguments are summarized in the following assertion.

\begin{lemma}
\label{le:est-full}
Let $u(z)$ be a classical solution of problem \eqref{eq:main-reg} with $F_\epsilon\not\equiv 0$ and smooth $\textbf{data}_m$. If the coefficients $a$, $\vec c$ and the exponents $p$, $q$, $s$ satisfy conditions \eqref{eq:data-0}, \eqref{eq:data}, $f\in L^{N+2}(Q_T)$, and

\[
q^+\leq 1+\frac{p^-}{N},\qquad s^+\leq p^-,
\]
then for every $r\geq \max \{p^+,2\}$

\[
\sup_{(0,T)}\|\nabla u(t)\|_{r,\Omega}^r\leq \sup_{(0,T)}\|w_\epsilon(t)\|^{\frac{r}{2}}_{\frac{r}{2},\Omega}\leq C+ \|\nabla u_0\|_{r,\Omega}^{r}
\]
with a constant $C$ depending on $\textbf{data}$, $L_p$, $p^\pm$, $q^\pm$, $s^\pm$, $N$, $r$ but independent of $\epsilon$. Moreover,

\begin{equation}
\label{eq:final-2}
\int_{Q_T}  w_\epsilon^{\frac{p+r+\rho-4}{2}}|\nabla u|^2 \,dz  +\int_{Q_T}\left|\nabla \left(w_\epsilon^{\frac{p(z)+r-2}{4}}\right)\right|^2\,dz\leq C'
\end{equation}
with a constant $C'$ depending on the same quantities as $C$ and $\rho$.
\end{lemma}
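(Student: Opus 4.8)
The proof is obtained by assembling the estimates established in Subsections~\ref{sec:est-source}.1 and the preceding arguments; no new ideas are needed, only careful collection and absorption. Since $u\equiv u_{\epsilon m}$ is a classical solution (Proposition~\ref{pro:existence-smooth-data}), for every $t$ the function $u(\cdot,t)$ belongs to $C^{3}(\Omega)\cap C^{2}(\overline\Omega)$ and vanishes on $\partial\Omega$, so Lemma~\ref{le:principal-improved} and Corollary~\ref{cor:p-prim} apply (note $p^->\tfrac{2(N+1)}{N+2}>\tfrac{2N}{N+2}$). First I would multiply \eqref{eq:main-reg} by $-\operatorname{div}\bigl(w_\epsilon^{\frac{r-2}{2}}\nabla u\bigr)$ with $r\geq\max\{2,p^+\}$ and integrate over $\Omega$: the parabolic term produces $\frac1r\frac{d}{dt}\int_\Omega w_\epsilon^{r/2}\,dx$, the principal term is bounded below through Lemma~\ref{le:principal-improved} by $C_0\int_\Omega w_\epsilon^{\frac{p+r}{2}-2}|u_{xx}|^2\,dx+C_1\int_\Omega\bigl|\nabla\bigl(w_\epsilon^{\frac{p+r-2}{4}}\bigr)\bigr|^2\,dx$ up to a controlled constant, exactly as in \eqref{eq:a-priori-1}, and the right-hand side carries the three forcing contributions coming from $f$, $F_{1\epsilon}$, and $F_{2\epsilon}$ paired with $\operatorname{div}\bigl(w_\epsilon^{\frac{r-2}{2}}\nabla u\bigr)$.

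Next I would estimate those contributions one at a time. For the $f$-term, expanding $\operatorname{div}\bigl(w_\epsilon^{\frac{r-2}{2}}\nabla u\bigr)$ and applying Young's inequality extracts a small fraction of $\int_\Omega w_\epsilon^{\frac{p+r}{2}-2}|u_{xx}|^2$ together with $r^2\int_\Omega f^2 w_\epsilon^{\frac{r-p}{2}}$; after integration in $t$ the quantity $\mathcal I=\int_{Q_T}f^2 w_\epsilon^{\frac{r-p}{2}}\,dz$ is controlled by H\"older with exponent $N+2$, and since $p^->\tfrac{2N}{N+2}$ the power $\tfrac{(r-p)(N+2)}{2N}$ is strictly below $\tfrac12\bigl(p+r-2+\tfrac{2r}{N}\bigr)$, so Young's inequality yields \eqref{eq:I-1}. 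Then the Sobolev embedding $W^{1,2N/(N+2)}(\Omega)\subset L^{2}(\Omega)$ applied to $w_\epsilon^{\frac{p+r-2+2r/N}{4}}$, with its gradient split into a Hessian piece (H\"older with $\tfrac{N+2}{N},\tfrac{N+2}{2}$) and a logarithmic piece (controlled via \eqref{eq:elem} and Lemma~\ref{le:racsam}), leads to \eqref{eq:I-2} with free small parameters $\mu,\nu$. The $F_{1\epsilon}$-term is treated identically once one notes $F_{1\epsilon}^2\le C(\epsilon^2+u^2)^{q-1}$, so that $\|F_{1\epsilon}\|_{N+2,Q_t}^2\le C\bigl(1+\|u\|_{(N+2)(q^+-1),Q_t}^{2(q^+-1)}\bigr)$; here the hypothesis $q^+\le 1+\tfrac{p^-}{N}$ is exactly what legitimizes the parabolic embedding \eqref{eq:DB}, which together with \eqref{eq:unif-reg} bounds this norm by a constant depending only on $\textbf{data}$. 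For $F_{2\epsilon}$ I would use $|F_{2\epsilon}|\le C w_\epsilon^{\frac{s-1}{2}}$, Young's inequality, and then $s^+\le p^-$ together with \eqref{eq:w-u} and the interpolation inequality \eqref{eq:interpol-1} to absorb $\int_{Q_T}w_\epsilon^{\frac{2s+r-p-2}{2}}\,dz$ into a small multiple of $\int_{Q_T}w_\epsilon^{\frac{p+r}{2}-2}|u_{xx}|^2\,dz$ plus a constant.

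Finally, plugging all these bounds into the time-integrated inequality \eqref{eq:ODI-new-1} and choosing $\delta$, $\mu$, $\nu$ small enough that every surviving copy of $\int_{Q_T}w_\epsilon^{\frac{p+r}{2}-2}|u_{xx}|^2\,dz$ and of $\sup_{(0,T)}\|w_\epsilon\|_{r/2,\Omega}^{r/2}$ on the right is absorbed into the left, and using \eqref{eq:interpol-1} once more to pass from the $\nabla\bigl(w_\epsilon^{\frac{p+r-2}{4}}\bigr)$-term to $\int_{Q_T}w_\epsilon^{\frac{p+r+\rho-4}{2}}|\nabla u|^2\,dz$, produces \eqref{eq:final}. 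Bounding the initial term $\int_\Omega w_\epsilon^{r/2}(x,0)\,dx=\int_\Omega(\epsilon^2+|\nabla u_{0m}|^2)^{r/2}\,dx$ by $C+\|\nabla u_0\|_{r,\Omega}^r$ uniformly in $\epsilon,m$ (using $\epsilon<1$ and $u_{0m}\to u_0$ in $W^{1,r}_0(\Omega)$) then gives the second inequality of the claimed chain, while the first is trivial from $|\nabla u|^r\le w_\epsilon^{r/2}$; the \emph{Moreover} estimate \eqref{eq:final-2} is read off directly from the two $Q_T$-integrals on the left of \eqref{eq:final}, for arbitrary $\rho\in(0,\tfrac{4}{N+2})$. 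I expect the main obstacle to be the bookkeeping in the $f$-estimate: arranging the Sobolev--H\"older chain so that the small parameter $\mu$ multiplies the $\sup$-term and the Hessian term with strictly positive powers simultaneously, while tracking the polynomial-in-$r$ dependence of every intermediate constant, so that the final constant genuinely depends only on $\textbf{data}$ and $r$; the constraint $q^+\le 1+p^-/N$ is the sharp structural price paid to close the argument in the borderline case $\sigma=N+2$.
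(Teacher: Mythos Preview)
Your proposal is correct and follows essentially the same route as the paper: multiply \eqref{eq:main-reg} by $-\operatorname{div}\bigl(w_\epsilon^{\frac{r-2}{2}}\nabla u\bigr)$, use Lemma~\ref{le:principal-improved} on the principal part, treat the $f$-term via the $\sigma=N+2$ chain leading to \eqref{eq:I-2}, the $F_{1\epsilon}$-term via $\|F_{1\epsilon}\|_{N+2,Q_t}^2\le C\bigl(1+\|u\|_{(N+2)(q^+-1),Q_t}^{2(q^+-1)}\bigr)$ together with the parabolic embedding \eqref{eq:DB} enabled by $q^+\le 1+p^-/N$, and the $F_{2\epsilon}$-term via Young, $s^+\le p^-$, and \eqref{eq:interpol-1}. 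The absorption and the reading-off of \eqref{eq:final-2} from \eqref{eq:final} are exactly as in the paper.
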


Let $f\in L^{\sigma}(Q_T)$ with $\sigma \in (2,N+2)$. Proceeding exactly as in the case $\sigma=N+2$, we arrive at inequality \eqref{eq:ODI-new-4-prim-prim} in which the second term on the right-hand side is now substituted by

\begin{equation}
\label{eq:ODI-new-4-prim-prim-prim}
\begin{split}
& C_3r^2\left(\|f\|^2_{\sigma,Q_t} + \|F_{1\epsilon}\|_{\sigma,Q_t}^2\right) \left(C+ \lambda  \int_{Q_T}w_{\epsilon}^{\frac{p+r-2+\frac{2r}{\beta}}{2}}\,dz\right)^{\frac{\sigma-2}{\sigma}}
\end{split}
\end{equation}
with $\beta=\frac{2\mu}{1-\mu}$ and $r$ satisfying \eqref{eq:filter}. In \eqref{eq:ODI-new-4-prim-prim-prim}, the term including $\|f\|_{\sigma,Q_t}^2$ is estimated in Subsection \ref{subsec:est-2}. The second term is estimated with the help of the parabolic embedding inequality \eqref{eq:DB}:

\[
\begin{split}
\|F_{1\epsilon}\|^2_{\sigma,Q_t}  & \leq C\left(\int_{Q_t}(\epsilon^2+u^2)^{\sigma\frac{q-1}{2}}\,dz\right)^{\frac{2}{\sigma}} \leq C\left(1 + \|u\|_{\sigma(q^+-1),Q_t}^{2(q^+-1)}\right)
\\
& \leq C\left(1+\|\nabla u\|_{p^-,Q_t}^{q^+-1}+\sup_{(0,T)}\|u(t)\|^{q^+-1}_{2,\Omega}\right),
\end{split}
\]
provided that

\[
\sigma(q^+-1)\leq p^-\frac{N+2}{N}\qquad \Leftrightarrow \qquad q^+\leq 1+ p^-\frac{N+2}{\sigma N}.
\]
By virtue of \eqref{eq:unif-reg} $\|F_{1\epsilon}\|_{\sigma,Q_t}^2\leq C(\textbf{data})$. The term involving $F_{2\epsilon}$ is estimated as in the case $f\in L^{N+2}(Q_T)$.

\begin{lemma}
\label{le:est-full-1}
Let $u(z)$ be a classical solution of problem \eqref{eq:main-reg} with $F_\epsilon\not\equiv 0$ and smooth $\textbf{data}_m$. Assume that the coefficients $a$, $\vec c$ and the exponents $p$, $q$, $s$ satisfy conditions \eqref{eq:data-0}, \eqref{eq:data}, $f\in L^{\sigma}(Q_T)$ with $\sigma \in (2,N+2)$, and

\[
q^+\leq 1+p^-\frac{N+2}{\sigma N},\qquad s^+\leq p^-.
\]
For every $r\geq \max \{p^+,2\}$ satisfying condition \eqref{eq:filter}

\[
\sup_{(0,T)}\|\nabla u(t)\|_{r,\Omega}^r\leq \sup_{(0,T)}\|w_\epsilon(t)\|^{\frac{r}{2}}_{\frac{r}{2},\Omega}\leq C+ \|\nabla u_0\|_{r,\Omega}^{r}
\]
with a constant $C$ depending on $\textbf{data}$, $L_p$, $p^\pm$, $q^\pm$, $s^\pm$, $N$, $r$ but independent of $\epsilon$. Moreover,

\begin{equation}
\label{eq:final-2-non}
\int_{Q_T}  w_\epsilon^{\frac{p+r+\rho-4}{2}}|\nabla u|^2 \,dz  +\int_{Q_T}\left|\nabla \left(w_\epsilon^{\frac{p(z)+r-2}{4}}\right)\right|^2\,dz\leq C'
\end{equation}
with a constant $C'$ depending on the same quantities as $C$, and $\rho$.
\end{lemma}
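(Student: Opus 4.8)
The plan is to repeat, with minor modifications, the argument carried out for $f\in L^{N+2}(Q_T)$ in Lemma~\ref{le:est-full}, and to feed in at the appropriate point the refined estimate of the integral $\mathcal{I}$ established in Subsection~\ref{subsec:est-2} for $\sigma\in(2,N+2)$. First I would multiply the regularized equation \eqref{eq:main-reg} by $-\operatorname{div}\left(w_\epsilon^{\frac{r-2}{2}}\nabla u\right)$, integrate over $\Omega$, and apply Lemma~\ref{le:principal-improved} together with Young's inequality exactly as in the derivation of \eqref{eq:a-priori-5}. After integration in $t$ this yields an inequality of the shape \eqref{eq:ODI-new-4-prim-prim} in which the term carrying $\|f\|_{N+2,Q_t}^2$ is replaced by \eqref{eq:ODI-new-4-prim-prim-prim} with $\beta=\frac{2\mu}{1-\mu}$, $\mu\in\left(\frac{N}{N+2},1\right)$, and $r$ obeying \eqref{eq:filter}, and the term carrying $\|F_{1\epsilon}\|_{N+2,Q_t}$ is replaced by its $L^\sigma$ analogue; in addition the term $C\left|\int_{Q_t}F_{2\epsilon}\operatorname{div}\left(w_\epsilon^{\frac{r-2}{2}}\nabla u\right)\,dz\right|$ is present on the right-hand side.

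Next I would dispose of the three groups of terms separately. The contribution of $\|f\|_{\sigma,Q_t}^2$ is exactly the integral $\mathcal{I}$ bounded in \eqref{eq:I-2-new}: it splits into a small multiple of $\sup_{(0,T)}\|w_\epsilon\|_{\frac{r}{2},\Omega}^{\frac{r}{2}}$, a small multiple of $\int_{Q_T}w_\epsilon^{\frac{p+r-4}{2}}|u_{xx}|^2\,dz$, and a constant depending only on $\textbf{data}$; here condition \eqref{eq:filter} is what guarantees that $\mu$ can be fixed close enough to $\frac{N}{N+2}$ for the restriction \eqref{eq:r-modi} to hold. For $F_{1\epsilon}$ I would use $|F_{1\epsilon}(z,u)|\leq a^+(\epsilon^2+u^2)^{\frac{q(z)-1}{2}}$, so that $\|F_{1\epsilon}\|_{\sigma,Q_t}^2\leq C\big(1+\|u\|_{\sigma(q^+-1),Q_t}^{2(q^+-1)}\big)$, and then invoke the parabolic embedding inequality \eqref{eq:DB}; this is legitimate precisely because $\sigma(q^+-1)\leq p^-\frac{N+2}{N}$, which is the hypothesis $q^+\leq 1+p^-\frac{N+2}{\sigma N}$, and \eqref{eq:unif-reg} then bounds $\|F_{1\epsilon}\|_{\sigma,Q_t}^2$ by a constant depending only on $\textbf{data}$, so this term does not interfere with the absorption. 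For $F_{2\epsilon}$ I would use $|F_{2\epsilon}(z,\nabla u)|\leq C w_\epsilon^{\frac{s(z)-1}{2}}$ and $\left|\operatorname{div}\left(w_\epsilon^{\frac{r-2}{2}}\nabla u\right)\right|\leq C(r)\,w_\epsilon^{\frac{r-2}{2}}|u_{xx}|$, split $w_\epsilon^{\frac{s+r-3}{2}}=w_\epsilon^{\frac{2s+r-p-2}{4}}w_\epsilon^{\frac{p+r-4}{4}}$, apply Young's inequality to absorb the Hessian factor, and then, using $s(z)\leq p(z)$ and $p^->\frac{2N}{N+2}$, pick $\lambda\in\left(0,\frac{4}{N+2}\right)$ with $p+r+\lambda>4$ and bound $w_\epsilon^{\frac{2s+r-p-2}{2}}\leq 2\left(1+w_\epsilon^{\frac{p+r+\lambda-4}{2}}|\nabla u|^2\right)$, after which \eqref{eq:interpol-1} absorbs the remainder into $\int_{Q_T}w_\epsilon^{\frac{p+r}{2}-2}|u_{xx}|^2\,dz$.

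Collecting these bounds and choosing the free smallness parameters small enough, I would absorb every occurrence of the Hessian integral and of $\sup_{(0,T)}\|w_\epsilon\|_{\frac{r}{2},\Omega}^{\frac{r}{2}}$ into the left-hand side, reaching an inequality of the form \eqref{eq:final-new}. The first assertion then follows since $|\nabla u|^2\leq w_\epsilon$ and, because $\epsilon<1$, $\int_\Omega w_\epsilon^{\frac{r}{2}}(x,0)\,dx\leq C\big(1+\|\nabla u_0\|_{r,\Omega}^{r}\big)$; the gradient higher-integrability bound \eqref{eq:final-2-non} is read off directly from the two remaining terms on the left-hand side of \eqref{eq:final-new}. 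I expect the main obstacle to be purely the bookkeeping: one must track the exact powers of the smallness parameters through \eqref{eq:I-2-new} and the source-term estimates so that all non-constant terms can genuinely be absorbed, while at the same time checking that the admissible window \eqref{eq:filter} for $r$ is consistent with the choice of $\mu\in\left(\frac{N}{N+2},1\right)$ forced by \eqref{eq:r-modi}. The only genuinely new point compared with the case $\sigma=N+2$ is the treatment of $F_{1\epsilon}$, where the structural restriction on $q^+$ enters so that the parabolic embedding applies with exponent $\sigma(q^+-1)$.
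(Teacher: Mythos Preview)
Your proposal is correct and follows essentially the same route as the paper's proof: repeat the $\sigma=N+2$ argument to reach the analogue of \eqref{eq:ODI-new-4-prim-prim} with \eqref{eq:ODI-new-4-prim-prim-prim} on the right, invoke the refined estimate \eqref{eq:I-2-new} from Subsection~\ref{subsec:est-2} for the $f$-contribution, bound $\|F_{1\epsilon}\|_{\sigma,Q_t}$ via the parabolic embedding \eqref{eq:DB} under the hypothesis $q^+\leq 1+p^-\frac{N+2}{\sigma N}$, and treat $F_{2\epsilon}$ exactly as in the case $\sigma=N+2$. Your identification of \eqref{eq:filter} as the condition permitting the choice of $\mu$ close to $\frac{N}{N+2}$ in \eqref{eq:r-modi} also matches the paper's reasoning.
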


\section{Proofs of the main results}\label{sec:proof-results}
\subsection{Proof of Theorems \ref{th:main-1}, \ref{th:main-1-1}}
We give the detailed proof of Theorem \ref{th:main-1}. The assertion of Theorem \ref{th:main-1-1} follows by the same argument and is therefore omitted.
Let $\{u_{\epsilon m}\}$ be a sequence of classical solutions of problem \eqref{eq:main-reg} with the data $\textbf{data}_m$ and $u_\epsilon=\lim_{m\to \infty}u_{\epsilon m}$ be the solution of problem \eqref{eq:main-reg} with $\textbf{data}$. By Proposition \ref{pro:existence-smooth-data} and Lemma \ref{le:est-full}, for every $r\geq \max\{2,p^+\}$

\begin{equation}
\label{eq:reg-est}
\sup_{(0,T)}\int_{\Omega}(\epsilon^2+|\nabla u_{\epsilon m}|^2)^{\frac{r}{2}}\,dx + \int_{Q_T} (\epsilon^2+|\nabla u_{\epsilon m}|^2)^{\frac{p+r+\rho-4}{2}}|\nabla u_{\epsilon m}|^2 \,dz \leq C
\end{equation}
with a constant $C$ independent of $m$ and $\epsilon$. Letting $m\to \infty$ and using the Fatou lemma we conclude that $u_\epsilon$ satisfy inequality \eqref{eq:reg-est} with the same constant $C$. By Proposition \ref{pro:existence-degenerate} $u=\lim_{\epsilon\to 0}u_\epsilon(z)$ (along a sequence $\{\epsilon_k\}$) is a strong solution of problem \eqref{eq:main}. The functions $w_\epsilon=\epsilon^2+|\nabla u_\epsilon|^2$ converge to $|\nabla u|$ a.e. in $Q_T$ and satisfy the uniform estimate \eqref{eq:reg-est}, whence the conclusion.

\subsection{Proof of Theorem \ref{th:main-2}}
Let $u_{\epsilon m}$ be the solution of problem \eqref{eq:main-reg} and $w_{\epsilon m}=\epsilon^2+|\nabla u_{\epsilon m}|^2$. By \eqref{eq:final-2-non} the classical solutions of problem \eqref{eq:main-reg} with the data $\textbf{data}_m$ satisfy the uniform estimates

\[
\left\|D_i\left(w_{\epsilon m}^{\frac{p_m+r-2}{4}}\right)\right\|_{2,Q_T}\leq C',\qquad i=\overline{1,N}.
\]
It follows that there exist functions $\eta^{(\epsilon)}_i\in L^{2}(Q_T)$ such that

\[
D_i\left(w_{\epsilon m}^{\frac{p_m+r-2}{4}}\right)\rightharpoonup \eta^{(\epsilon)}_i\quad\text{in $L^2(Q_T)$}
\]
(up to a subsequence). To identify $\eta^{(\epsilon)}_i$ we use the pointwise convergence $\nabla u_{\epsilon m}\to \nabla u_\epsilon$ and the uniform convergence $p_m\to p$: for every $\phi\in C_c^\infty(Q_T)$

\begin{equation}
\label{eq:sobolev-der}
\begin{split}
(\eta^{(\epsilon)}_i,\phi)_{2,Q_T} & =
\lim_{m\to \infty}\left(D_i\left(w_{\epsilon m}^{\frac{p_m+r-2}{4}}\right),\phi\right)_{2,Q_T}
\\
& = -\lim_{m\to \infty}\left((\epsilon^2+|\nabla u_{\epsilon m}|^2)^{\frac{p_m+r-2}{4}},D_i\phi\right)_{2,Q_T}
\\
& = - \left( (\epsilon^2+|\nabla u_{\epsilon}|^2)^{\frac{p+r-2}{4}},D_i\phi\right)_{2,Q_T}.
\end{split}
\end{equation}
It follows that $\eta^{(\epsilon)}_i=D_i\left((\epsilon^2+|\nabla u_{\epsilon}|^2)^{\frac{p+r-2}{4}}\right)\in L^2(Q_T)$ and $\|\eta^{(\epsilon)}_i\|_{2,Q_T}$ are uniformly bounded. Then $\eta^{(\epsilon)}_i\rightharpoonup \eta_i$ in $L^2(Q_T)$, and $\eta_i=D_i\left(|\nabla u|^{\frac{p(z)+r-2}{2}}\right)$ because of the pointwise convergence $\nabla u_\epsilon\to \nabla u$. These arguments prove \eqref{eq:main-2} (i).

It is straightforward to compute that for every $i,j=\overline{1,N}$

\[
\begin{split}
D_i\left(w_{\epsilon m}^{\frac{p_m+r}{4}-1}D_ju_{\epsilon m}\right) & = w_{\epsilon m}^{\frac{p_m+r}{4}-1}D_{ij}^2u_{\epsilon m} + \frac{1}{2}(p_m+r-4)w_{\epsilon m}^{\frac{p_m+r}{4}-2}D_ju_{\epsilon m}\sum_{k=1}^ND_ku_{\epsilon m}D_{kj}^2u_{\epsilon m}
\\
&
+ w_{\epsilon m}^{\frac{p_m+r}{4}-1}\ln w_{\epsilon m}D_ip_mD_ju_{\epsilon m}.
\end{split}
\]
By \eqref{eq:elem}

\[
\left|D_i\left(w_{\epsilon m}^{\frac{p_m+r}{4}-1}D_ju_{\epsilon m}\right)\right|\leq C_1 w_{\epsilon m}^{\frac{p_m+r}{4}-1}|\left(u_{\epsilon m}\right)_{xx}|+ C_2w_{\epsilon m}^{\frac{p_m+r-4+\gamma}{4}}|\nabla u_{\epsilon m}| +C_3.
\]
with any $\gamma>0$. Fix $0<\gamma<\frac{4}{N+2}$. By Young's inequality and \eqref{eq:interpol-2}, \eqref{eq:final-2-non}

\[
\begin{split}
\left\|D_i\left(w_{\epsilon m}^{\frac{p_m+r}{4}-1}D_ju_{\epsilon m}\right)\right\|_{2,Q_T}^2 & \leq C_1' \int_{Q_T}w_{\epsilon m}^{\frac{p_m+r}{2}-2}\left|\left(u_{\epsilon m}\right)_{xx}\right|^2\,dz
\\
& + C_2'\int_{Q_T}w_{\epsilon m}^{\frac{p_m+r-4+\gamma}{2}}|\nabla u_{\epsilon m}|^2\,dz + C_3'\leq C
\end{split}
\]
with an independent of $\epsilon$ and $m$ constant $C$. It follows that there exists $\eta_{ij}^{(\epsilon)}\in L^2(Q_T)$ such that

\[
D_i\left(w_{\epsilon m}^{\frac{p_m+r}{4}-1}D_ju_{\epsilon m}\right)\rightharpoonup\eta_{ij}^{(\epsilon)}\quad \text{in $L^2(Q_T)$},
\]
(up to a subsequence). The equality

\[
\eta_{ij}^{(\epsilon)}=D_i\left((\epsilon^2+|\nabla u_\epsilon|^2)^{\frac{p+r}{2}-2}D_ju_{\epsilon}\right)
\]
follows from the pointwise convergence $\nabla u_{\epsilon m}\to \nabla u_\epsilon$.

For every $i,j=\overline{1,N}$ there exists a sequence $\{\eta_{ij}^{(\epsilon_k)}\}$ and $\eta_{ij}\in L^2(Q_T)$ such that $\eta_{ij}^{(\epsilon_k)}\rightharpoonup \eta_{ij}$. To identify $\eta_{ij}$ and prove \eqref{eq:main-2} (ii) we repeat \eqref{eq:sobolev-der}: for every $\phi\in C_c^{\infty}(Q_T)$

\[
\begin{split}
(\eta_{ij},\phi)_{2,Q_T} & =
\lim_{k\to \infty}\left(D_i\left((\epsilon_k^2+|\nabla u_{\epsilon_k}|^2)^{\frac{p+r}{4}-1}\right)D_ju_{\epsilon_k},\phi\right)_{2,Q_T}
\\
& = -\lim_{k\to \infty}\left((\epsilon_k^2+|\nabla u_{\epsilon_k}|^2)^{\frac{p+r-2}{4}}D_ju_{\epsilon_k},D_i\phi\right)_{2,Q_T}
\\
& = - \left( |\nabla u|^{\frac{p+r-2}{2}}D_ju,D_i\phi\right)_{2,Q_T}.
\end{split}
\]

\appendix

\section{Proof of Proposition \ref{int-by-parts-pro}}
\label{sec:intbyparts}
Let $\vec a$, $\vec b$ be given vectors. Integrating two times by parts we obtain
\begin{equation}
\label{eq:double-e}
\begin{split}
\int_\Omega & \operatorname{div}\vec a \operatorname{div}\vec b\,dx = \int_{\partial\Omega} a_{\nu} \operatorname{div}\vec b\,dS-\sum_{i=1}^N\int_\Omega a_i\sum_{j=1}^N D^2_{ij}b_j\,dx
\\
& = \int_{\partial\Omega} \left(a_{\nu} \operatorname{div}\vec b - \sum_{i,j=1}^{N}a_iD_ib_j \cos(\widehat{\vec \nu,x_j})\right)\,dS + \int_\Omega \sum_{i,j=1}^N D_j a_i D_i b_j\,dx
\\
& = \int_{\partial\Omega}\left(a_\nu \operatorname{div}\vec b- ((\vec a\cdot \nabla)\vec b)\cdot \vec \nu\right)\,dS + \int_\Omega \sum_{i,j=1}^N D_j a_i D_i b_j\,dx.
\end{split}
\end{equation}
At every point of $\partial\Omega$ we represent

\[
\begin{split}
& \vec a=\vec a_\tau +a_\nu\vec \nu,
\quad  \vec a_\tau=\sum_{j=1}^{N-1} a_j\vec \tau_j,\quad a_j=(\vec a,\vec \tau_j),\quad a_\nu=(\vec a,\vec \nu).
\end{split}
\]
Let $\vec c$ be a smooth vector. Using the representation $\vec c=\displaystyle\sum_{i=1}^{N-1}c_i\vec \tau_i+c_\nu\vec\nu$ and the formulas $(\vec \tau_i,\vec \nu)=0$, $\dfrac{\partial \vec \nu}{\partial \nu}\cdot \vec \nu=\dfrac{1}{2}\dfrac{\partial |\vec \nu|^2}{\partial \nu}=0$, we rewrite the divergence operator on $\partial\Omega$:
\[
\begin{split}
& \operatorname{div} \vec c=\sum_{i=1}^{N-1}\dfrac{\partial \vec c}{\partial s_i}\cdot\vec \tau_i +\dfrac{\partial \vec c}{\partial \nu}\cdot \vec \nu
= \sum_{i=1}^{N-1}\dfrac{\partial}{\partial s_i}\left(\sum_{j=1}^{N-1}c_j\vec \tau_j +c_\nu \vec \nu\right)\cdot\vec \tau_i +\dfrac{\partial}{\partial\nu}\left(
\sum_{j=1}^{N-1}c_j\vec \tau_j +c_\nu \vec \nu\right)\cdot \vec \nu
\\
& \qquad = \sum_{j=1}^{N-1}\dfrac{\partial c_j}{\partial s_j} + \sum_{i,j=1}
^{N-1}c_j \dfrac{\partial \vec\tau_j}{\partial s_i}\vec\tau_i + c_\nu\sum_{i=1}^{N-1}\dfrac{\partial \vec\nu}{\partial s_i}\vec\tau_i +\sum_{j=1}^{N-1}c_j\dfrac{\partial \vec\tau_j}{\partial \nu}\vec \nu +\dfrac{\partial c_\nu}{\partial\nu}.
\end{split}
\]
By the same token,

\[
\begin{split}
(\vec a\cdot \nabla)\vec b & = \sum_{j=1}^{N-1}a_j\dfrac{\partial \vec b}{\partial s_j} +a_\nu\dfrac{\partial \vec b}{\partial\nu}
=  \sum_{j=1}^{N-1}a_j\left(\sum_{k=1}^{N-1}\left(\dfrac{\partial b_k}{\partial s_j}\vec \tau_k + b_k\dfrac{\partial \vec \tau_k}{\partial s_j}\right)+ \dfrac{\partial b_\nu}{\partial s_j}\vec \nu +b_\nu \dfrac{\partial \vec \nu}{\partial s_j}\right)
\\
& \quad + a_\nu\sum_{k=1}^{N-1}\left(\dfrac{\partial b_k}{\partial \nu}\vec\tau_k+b_k\dfrac{\partial \vec\tau_k}{\partial \nu} +\dfrac{\partial b_\nu}{\partial\nu}\vec \nu+b_\nu\dfrac{\partial\vec\nu}{\partial \nu}\right)
\end{split}
\]
and

\[
\begin{split}
\left((\vec a\cdot \nabla)\vec b\right)\cdot \vec\nu & = \sum_{j,k=1}^{N-1}a_jb_k\dfrac{\partial \vec\tau_k}{\partial s_j}\vec \nu + \sum_{j=1}^{N-1}\dfrac{\partial b_\nu}{\partial s_j} +a_\nu\sum_{k=1}^{N-1}b_k\dfrac{\partial \vec\tau_k}{\partial \nu}\vec\nu +a_\nu\dfrac{\partial b_\nu}{\partial \nu}.
\end{split}
\]
Simplifying, we find that at every point of $\partial\Omega$

\[
\begin{split}
a_\nu \operatorname{div}\vec b - ((\vec a\cdot \nabla)\vec b)\cdot \vec \nu  & =  a_\nu\sum_{j=1}^{N-1}\frac{\partial b_j}{\partial s_j} + a_\nu \sum_{j,k=1}
^{N-1}b_k\dfrac{\partial \vec\tau_k}{\partial s_j}\vec \tau_j
+a_\nu b_\nu\sum_{j=1}^{N-1}\dfrac{\partial\vec \nu}{\partial s_j}\vec\tau_j-\sum_{j=1}^{N-1}a_j\dfrac{\partial b_\nu}{\partial s_j} -\sum_{j,k=1}^{N-1}a_jb_k\dfrac{\partial \vec\tau_k}{\partial s_j}\vec \nu.
\end{split}
\]
An auxiliary computation:

\[
\begin{split}
\operatorname{div}_\tau \left(a_\nu \vec b_\tau\right) & = \sum_{i=1}^{N-1}\dfrac{\partial}{\partial s_i}\left(\sum_{j=1}^{N-1}a_\nu b_j\vec\tau_j\right)\cdot \vec\tau_i
= \sum_{i,j=1}^{N-1}\dfrac{\partial a_\nu}{\partial s_i}b_j (\vec\tau_i,\vec\tau_j) + a_\nu\sum_{i,j=1}^{N-1}\dfrac{\partial b_j}{\partial s_i}(\vec\tau_i,\vec\tau_j) +a_\nu\sum_{i,j=1}^{N-1}b_j\dfrac{\partial\vec\tau_j}{\partial s_i}\vec\tau_i
\\
& = \sum_{i=1}^{N-1}\dfrac{\partial a_\nu}{\partial s_i}b_i  + a_\nu\sum_{i=1}^{N-1}\dfrac{\partial b_i}{\partial s_i} +a_\nu\sum_{i,j=1}^{N-1}b_j\dfrac{\partial\vec\tau_j}{\partial s_i}\vec\tau_i.
\end{split}
\]
A combination of the last two formulas gives

\[
\begin{split}
a_\nu \operatorname{div}\vec b & - ((\vec a\cdot \nabla)\vec b)\cdot \vec \nu =  \operatorname{div}_{\tau}\left(a_\nu \vec b_\tau\right) -\sum_{i=1}^{N-1}\left(\dfrac{\partial a_\nu}{\partial s_i}b_i + a_i\dfrac{\partial b_\nu}{\partial s_i}\right) +a_\nu b_\nu\sum_{j=1}^{N-1}\dfrac{\partial\vec \nu}{\partial s_j}\vec\tau_j -\sum_{j,k=1}^{N-1}a_jb_k\dfrac{\partial \vec\tau_k}{\partial s_j}\vec \nu.
\end{split}
\]
Fix a point $x_0\in \partial\Omega$, place the origin of the local coordinate system $\{y_i\}_{i=1}^N$ at $x_0$, and represent locally $\partial\Omega$ by the graph of the $C^2$-function: $y_N=\phi(y')$ where $y'=(y_1,\ldots,y_{N-1})$ belong to the tangent plane to $\partial\Omega$ at $y=0$. For every tangent to $\partial\Omega$ vectors $\vec \xi,\vec \eta$ the second fundamental form of $\partial\Omega$ at $x_0$ is defined by the equality
\[
\mathcal{B}(\vec\xi;\vec \eta)=-\dfrac{\partial \vec\nu}{\partial \xi}\cdot \eta=-\sum_{j,k=1}^{N-1}\dfrac{\partial \vec \nu}{\partial s_j}\vec \tau_k \xi_j\eta_k.
\]
On the other hand,

\[
\mathcal{B}(\vec \xi;\vec \eta)= \sum_{i,j=1}^{N-1}D^2_{y_iy_j}\phi(0)\xi_i\eta_j\qquad\text{and}\quad
-\sum_{i=1}^{N-1}\dfrac{\partial \vec\nu}{\partial s_i}\cdot \vec\tau_i=\operatorname{trace}\mathcal{B}=\sum_{i=1}^{N-1}D^2_{y_iy_i}\phi(0),
\]
because

\[
(\vec\tau_k,\vec \nu)=0\quad \Rightarrow\quad \dfrac{\partial \vec\tau_k}{\partial s_j}\cdot \vec \nu=-\vec\tau_k\cdot \dfrac{\partial \vec\nu}{\partial s_j}.
\]
Since the field $a_\nu\vec b_{\tau}$ is tangent to $\partial\Omega$, by the divergence theorem

\[
\int_{\partial\Omega}\operatorname{div}_{\tau}\left(a_\nu \vec b_\tau\right)\,dS=0.
\]
The integral over $\partial\Omega$ takes on the form
\begin{equation}
\label{eq:by-parts-a-b-e}
\begin{split}
\int_{\partial \Omega} & \left(a_\nu \operatorname{div}\vec b - ((\vec a\cdot \nabla)\vec b)\cdot \vec \nu\right)\,dS
\\
&
=  -\int_{\partial\Omega}\sum_{i=1}^{N-1}\left(\dfrac{\partial a_\nu}{\partial s_i}b_i + a_i\dfrac{\partial b_\nu}{\partial s_i}\right) \,dS - \int_{\partial\Omega}\mathcal{B}(\vec a_\tau;\vec b_\tau)\,dS-\int_{\partial\Omega}a_{\nu}b_{\nu}\operatorname{trace}\mathcal{B} \,dS
\\
& = -\int_{\partial\Omega} \left(\vec a_\tau \nabla _\tau (\vec b\cdot \vec\nu)+ \vec b_\tau \nabla _\tau (\vec a\cdot \vec\nu)\right)\,dS - \int_{\partial\Omega}\mathcal{B}(\vec a_\tau;\vec b_\tau)\,dS-\int_{\partial\Omega}a_{\nu}b_{\nu}\operatorname{trace}\mathcal{B} \,dS.
\end{split}
\end{equation}
Hence, the proof.

\bibliographystyle{siam}%{elsarticle-num}
\bibliography{arxivfinal}
\end{document}